\documentclass[reqno,a4paper]{amsart}
\usepackage[utf8]{inputenc}
\usepackage[T1]{fontenc}
\usepackage[british]{babel}
\usepackage{amsfonts, amssymb, amsmath}
\usepackage[all]{xy}
\usepackage{hyperref}
\usepackage{mathbbol}
\usepackage{calrsfs}
\usepackage{mathabx}
\usepackage{amsfonts}
\usepackage{amssymb}
\usepackage{amsthm}
\usepackage{amsmath}
\allowdisplaybreaks
\usepackage{xcolor}
\definecolor{l-gray}{gray}{0.7}

\theoremstyle{plain}
\newtheorem{theorem}{Theorem}[subsection]
\newtheorem{proposition}[theorem]{Proposition}
\newtheorem{corollary}[theorem]{Corollary}
\newtheorem{lemma}[theorem]{Lemma}

\theoremstyle{definition}
\newtheorem{definition}[theorem]{Definition}

\theoremstyle{remark}
\newtheorem{remark}[theorem]{Remark}

\newtheorem{example}[theorem]{Example}

\setcounter{tocdepth}{1}
\numberwithin{equation}{section}
\numberwithin{figure}{section}

\newenvironment{tfae}
{
	\begin{enumerate}}
	{\end{enumerate}}

\newcommand{\noproof}{\hfill \qed}


\newcommand{\couniv}[2]{\langle #1, #2\rangle}
\newcommand{\counivtr}[3]{\langle #1, #2, #3\rangle}


\newcommand{\defn}{\textbf}

\newcommand{\cosmash}{\diamond}


\newcommand{\C}{\ensuremath{\mathcal{C}}}

\newcommand{\V}{\mathcal{V}}
\newcommand{\W}{\mathcal{W}}

\newcommand{\SSE}{\ensuremath{\mathsf{SSE}}}
\newcommand{\Pt}{\ensuremath{\mathsf{Pt}}}

\newcommand{\Cat}{\ensuremath{\mathsf{Cat}}}

\newcommand{\Gp}{\ensuremath{\mathsf{Gp}}} 

\newcommand{\XMod}{\ensuremath{\mathsf{XMod}}}

\newcommand{\Ens}{\mathsf{Set}}
\newcommand{\Set}{\mathsf{Set}}
\newcommand{\LieK}{\mathsf{Lie_{\mathbb{K}}}}

\renewcommand{\P}{{\rm (P)}}

\newcommand{\K}{\mathbb{K}}
\newcommand{\N}{\mathbb{N}}

\newcommand{\Z}{\mathbb{Z}}

\newcommand{\Left}{\mathrm{L}}

\DeclareMathOperator{\Coker}{Coker}

\newdir{>>}{{}*!/3.5pt/:(1,-.2)@^{>}*!/3.5pt/:(1,+.2)@_{>}*!/7pt/:(1,-.2)@^{>}*!/7pt/:(1,+.2)@_{>}} 
\newdir{ >>}{{}*!/8pt/@{|}*!/3.5pt/:(1,-.2)@^{>}*!/3.5pt/:(1,+.2)@_{>}}
\newdir{ |>}{{}*!/-3.5pt/@{|}*!/-8pt/:(1,-.2)@^{>}*!/-8pt/:(1,+.2)@_{>}} 
\newdir{ >}{{}*!/-8pt/@{>}}
\newdir{>}{{}*:(1,-.2)@^{>}*:(1,+.2)@_{>}}
\newdir{<}{{}*:(1,+.2)@^{<}*:(1,-.2)@_{<}}

\def\pullback{
	\ar@{-}[]+R+<6pt,-1pt>;[]+RD+<6pt,-6pt>%
	\ar@{-}[]+D+<1pt,-6pt>;[]+RD+<6pt,-6pt>}

\def\reversepullback{ 
	\ar@{-}[]+R+<6pt,-1pt>;[]+RU+<6pt,6pt>%
	\ar@{-}[]+U+<-1pt,6pt>;[]+RU+<6pt,6pt>}

\def\dottedpullback{%
	\ar@{.}[]+R+<6pt,-1pt>;[]+RD+<6pt,-6pt>%
	\ar@{.}[]+D+<1pt,-6pt>;[]+RD+<6pt,-6pt>}

\def\pushout{%
	\ar@{-}[]+L+<-6pt,1pt>;[]+LU+<-6pt,6pt>%
	\ar@{-}[]+U+<-1pt,6pt>;[]+LU+<-6pt,6pt>}

\usepackage{tikz-cd}

\DeclareMathOperator{\coker}{coker}

\usepackage{hyperref}
\hypersetup{
	colorlinks=true,       
	linkcolor=blue,          
	citecolor=magenta,        
	filecolor=green,      
	urlcolor=cyan           
}

\setcounter{tocdepth}{1}     

\allowdisplaybreaks

\begin{document}
\emergencystretch 3em

\title[Projective crossed modules in semi-abelian categories]{Projective crossed modules\\in semi-abelian categories}

\author{Maxime Culot}

\address{Institut de Recherche en Math\'ematique et Physique, Universit\'e catholique de Louvain, che\-min du cyclotron~2 bte~L7.01.02, B--1348 Louvain-la-Neuve, Belgium}

\email{maxime.culot@uclouvain.be}

\thanks{Competing interests: The authors declare none. Author Contributions: All authors discussed the results and contributed to the final manuscript. Data Availability: No new data were created or analysed during this study. Data sharing is not applicable to this article. Funding declaration: The authors did not receive support from any organization for the submitted work.}

\begin{abstract}
	We characterize projective objects in the category of internal crossed modules within any semi-abelian category. When this category forms a variety of algebras, the internal crossed modules again constitute a semi-abelian variety, ensuring the existence of free objects, and thus of enough projectives. We show that such a variety is not necessarily Schreier, but satisfies the so-called Condition (P)---meaning the class of projectives is closed under protosplit subobjects---if and only if the base variety satisfies this condition. As a consequence, the non-additive left chain-derived functors of the connected components functor are well defined in this context.
\end{abstract}

\subjclass[2020]{18E13, 18G05, 18G10, 18G45}
\keywords{Non-additive derived functor, internal crossed module, semi-abelian category, projective object, cosmash product, Schreier variety of algebras}

\maketitle

\tableofcontents

\section{Introduction}

Projective objects are fundamental in homological algebra, particularly for defining derived functors of additive functors (see, e.g., \cite{Cartan-Eilenberg, MacLane:Homology, Weibel}). As they always have enough projectives, varieties of algebras provide a natural context for these objects. In some cases, such as all Schreier varieties, projectives coincide with free objects, which have explicit constructions (see for instance~\cite{Adamek-Rosicky-Vitale}). A~well-known example is the variety $\Gp$ of groups. However, intriguingly, the category of internal categories in $\Gp$—equivalently, the variety of crossed modules—does not share this property. In fact, \cite{Carrasco-Homology} constructs a projective crossed module that is not free.

In this paper, we explore the category of internal crossed modules $\XMod(\C)$ as introduced by G.~Janelidze~\cite{Janelidze}, for a given semi-abelian category $\C$. To streamline the presentation and simplify certain proofs, we use the alternative characterization of~\cite{HVdL} based on the theory of cosmash products (see Section~\ref{sec - cosmash products and XMOD}).

It is already known that if we replace $\C$ with a semi-abelian variety $\V$, then $\XMod(\V)$ itself forms a variety and thus, it has free objects and hence enough projectives. This framework enables us to extend the result of Carrasco, Cegarra et R.-Grandjeán~\cite{Carrasco-Homology} to any semi-abelian variety of algebras (see Corollary~\ref{cor - XMod(V) not Schreier}), which includes the case of $\Gp$. Our key findings rely on the construction of a projective internal crossed module (see Theorem~\ref{thm - projective in XMod(C)}) and the description of free internal crossed modules (see Corollary~\ref{cor - free object XMod(V)}). These results rely on preliminary findings related to the category of split extensions over a fixed object $B \in \C$, denoted by $\SSE_B(\C)$ (see Section~\ref{subsec-SSEB(V)}).

Another important consequence of Theorem~\ref{thm - projective in XMod(C)} is the study of the so-called Condition \P\ that the class of projectives is closed under protosplit subobjects (see Section~\ref{sec - condition P}). We show that a semi-abelian variety $\V$ satisfies \P\ if and only if $\XMod(\V)$ does (see Theorems~\ref{thm - proof of P XMod(V)} and~\ref{thm - P iff}). This establishes a new class of varieties satisfying \P. To do so, we only need to start from a semi-abelian variety $\V$ that itself satisfies \P. This includes trivial examples such as all abelian varieties and all Schreier varieties\footnote{There exist abelian varieties that are Schreier (e.g., the category of modules over a principal ideal domain), while others are not (e.g., the category of modules over the ring $\Z/4\Z$).}, and non-trivial examples such as the category of Lie algebras over a unital ring.

As first introduced in \cite{MC-FR-TVdL}, Condition \P\ is fundamental for defining the non-additive derived functors of a protoadditive functor that preserves coproducts and proper morphisms. In Section~\ref{sec - non additive functor pi0}, we show that non-additive derived functors (in the sense of Culot, Renaud, and Van der Linden~\cite{MC-FR-TVdL}) can be computed for the connected components functor $\pi_{0,\V}\colon \XMod(\V)\to \V$, and more generally, for all functors ${\pi_{0,\XMod^{n}(\V)}^m\colon \XMod^{m+n}(\V)\to \XMod^{n}(\V)}$ where $m>1$.

\section{Cosmash products and internal crossed modules}\label{sec - cosmash products and XMOD}

To keep the proofs in Section~\ref{sec - projective and free} as concise as possible, we avoid the original definition of internal crossed modules from~\cite{Janelidze}. Instead, we adopt the characterization of Hartl and Van der Linden~\cite{HVdL}, which makes crucial use of binary and ternary cosmash products. This point of view naturally aligns with the use of the \emph{action cores} rather than the traditional approach towards internal actions as algebras over a monad. Of course, for all this to make sense, the base category must satisfy certain reasonable assumptions. Therefore, in this paper, we assume that all categories are semi-abelian.

\subsection{Binary cosmash products}

Throughout, we consider a semi-abelian category $\C$.

\begin{definition}\cite{Smash,HVdL,MM-NC}
	Let $A$ and $B$ two objects of $\C$. We can define the comparison morphism
	\[
		\Sigma_{A,B}\coloneq
		\begin{pmatrix}
			1_A & 0 \\ 0 & 1_B
		\end{pmatrix}
		\colon A+B\to A\times B.
	\]
	This canonical arrow is a regular epimorphism, since the category is Bourn~unital~\cite{B0} (by Bourn~protomodularity, all semi-abelian categories are).

	The \defn{cosmash product of $A$ and $B$} is defined by taking the kernel of $\Sigma_{A,B}$:
	\[
		\xymatrix{
		0\ar[r] & A\cosmash B \ar@{{ |>}->}[r]^-{h_{A,B}}& A+B\ar@{-{ >>}}[r]^-{\Sigma_{A,B}} & A\times B\ar[r]&0
		}
	\]
	The construction is functorial: we can define the cosmash product of the arrows as the induced morphism between the kernels on the left-hand side of the diagram
	\[
		\xymatrix{
		0\ar[r] & A\cosmash B\ar@{-->}[d]_-{f\cosmash g} \ar@{{ |>}->}[r]^-{h_{A,B}}& A+B \ar[d]_-{f+g}\ar[r]^-{\Sigma_{A,B}} & A\times B \ar[d]^-{f\times g}\\
		0\ar[r] & A'\cosmash B' \ar@{{ |>}->}[r]_-{h_{A',B'}}& A'+B'\ar[r]_-{\Sigma_{A',B'}} & A'\times B'
		}
	\]
\end{definition}

\subsection{Ternary cosmash products}

\begin{definition}\cite{HVdL}
	Consider three objects $A,B,C$. We can define the comparison morphism
	\[
		\Sigma_{A,B,C}\colon A+B+C\to (A+B)\times (A+C) \times (B+C)
	\]
	where
	\[
		\Sigma_{A,B,C}\coloneq
		\begin{pmatrix}
			\iota_A & \iota_B & 0       \\
			\iota_A & 0       & \iota_C \\
			0       & \iota_B & \iota_C
		\end{pmatrix}
	\]
	and the morphisms $\iota_A$, $\iota_B$ and $\iota_C$ are the canonical inclusions of the corresponding coproducts.
	The \defn{ternary cosmash product} of $A$, $B$ and $C$ is the kernel of $\Sigma_{A,B,C}$ and it is written as
	\[
		\xymatrix{A\cosmash B \cosmash C \ar@{{ |>}->}[rr]^-{h_{A,B,C}} && A+B+C}
	\]
\end{definition}
Notice that $\Sigma_{A,B,C}$ need no longer be a regular epimorphism.
\begin{lemma}\cite{HVdL, MM-NC}\label{lemma - cosmash nice with reg epi}
	The binary and the ternary cosmash products preserve regular epimorphisms.\noproof
\end{lemma}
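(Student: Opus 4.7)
My plan is to reduce both statements to a single standard fact in semi-abelian categories: given a morphism of short exact sequences in which the middle and right vertical arrows are regular epimorphisms, the induced arrow on kernels is a regular epimorphism as well. This is the regular-epimorphism version of the short five lemma, available in any semi-abelian category.

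For the binary case, the functoriality diagram displayed in the definition of $f \cosmash g$ is already such a morphism of short exact sequences. Its middle arrow $f + g$ is a regular epimorphism because coproducts preserve regular epimorphisms in any semi-abelian category, and its right arrow $f \times g$ is a regular epimorphism because products preserve regular epimorphisms in any regular category. Invoking the short five lemma on this diagram immediately yields that $f \cosmash g$ is a regular epimorphism.

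The ternary case is where I expect the main difficulty, since $\Sigma_{A,B,C}$ is not assumed to be a regular epimorphism, so the kernel diagram does not form a short exact sequence as it stands. I would circumvent this by replacing the codomain with the regular image. Factoring $\Sigma_{A,B,C} = m_{A,B,C} \circ q_{A,B,C}$ with $q_{A,B,C}$ a regular epi and $m_{A,B,C}$ a monomorphism, we have $A \cosmash B \cosmash C = \ker(q_{A,B,C})$, and the induced diagram becomes a genuine morphism of short exact sequences with right-hand terms $\Ima(\Sigma_{A,B,C})$ and $\Ima(\Sigma_{A',B',C'})$. The middle vertical $f + g + h$ is again a regular epimorphism; for the right vertical $\Ima(\Sigma_{A,B,C}) \to \Ima(\Sigma_{A',B',C'})$, I would use that in a regular category any left factor of a regular epimorphism is itself a regular epimorphism, applied to the equality $(\text{induced arrow on images}) \circ q_{A,B,C} = q_{A',B',C'} \circ (f + g + h)$, both sides of which are regular epimorphisms. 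One more application of the short five lemma then delivers the claim. The only real subtlety beyond the binary case is this passage through image factorization; everything else rests on standard semi-abelian machinery that need only be cited.
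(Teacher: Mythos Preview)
Your argument is correct. In the paper this lemma is stated without proof (the \verb|\noproof| marker indicates the result is simply quoted from \cite{HVdL,MM-NC}), so there is no in-paper argument to compare against; what you have written is essentially the standard proof one finds in those references, reducing both cases to the regular short five lemma \cite[Lemma~4.2.5]{Borceux-Bourn} after observing that $f+g$ and $f\times g$ are regular epimorphisms and, in the ternary case, passing to the image factorisation of $\Sigma_{A,B,C}$ so as to recover a genuine short exact sequence.
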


\begin{remark}[Comparisons between ternary and binary cosmash products~\cite{HVdL}]
	We can define a first ``folding morphism (on the left)''
	\[
		S_{2,1}^{A,B}\colon A\cosmash A \cosmash B \to A\cosmash B
	\]
	via the diagram
	\[
		\xymatrix{
		0 \ar[r]& A\cosmash A \cosmash B \ar@{{ |>}->}[r]^-{h_{A,A,B}} \ar@{-->}[d]_-{S_{2,1}^{A,B}} & A+A+B \ar[r]^-{\Sigma_{A,A,B}} \ar[d]|-{\couniv{1_A}{1_A}+1_B} & (A+A)\times (A+B)^2\ar[d]^-{\couniv{1_A}{1_A}\times \left(\couniv{0}{1_B}\pi_2 \right)}\\
		0 \ar[r]& A\cosmash B \ar@{{ |>}->}[r]_-{h_{A,B}} & A+B \ar[r]_-{\Sigma_{A,B}} & A\times B
		}
	\]
	Similarly, we can define the ``folding morphism (on the right)''
	\[
		S^{A,B}_{1,2}\colon A\cosmash B \cosmash B \to A\cosmash B
	\]
	via the diagram
	\[
		\xymatrix{
		0 \ar[r]& A\cosmash B \cosmash B \ar@{{ |>}->}[r]^-{h_{A,B,B}} \ar@{-->}[d]_-{S_{1,2}^{A,B}} & A+B+B \ar[r]^-{\Sigma_{A,B,B}} \ar[d]|-{1_A + \couniv{1_B}{1_B}} & (A+B)^2\times (B+B)\ar[d]^-{\left(\couniv{1_A}{0}\pi_1 \right)\times \couniv{1_B}{1_B}}\\
		0 \ar[r]& A\cosmash B \ar@{{ |>}->}[r]_-{h_{A,B}} & A+B \ar[r]_-{\Sigma_{A,B}} & A\times B
		}
	\]
\end{remark}

\begin{proposition}\cite{Actions}
	The ternary cosmash $A\cosmash B \cosmash C$ is the subobject of $(A+B)\cosmash C$ represented by the morphism $j_{A,B,C}$ defined in the diagram
	\[
		\xymatrix{
		A\cosmash B \cosmash C \ar@{{ |>}->}[dr]^-{h_{A,B,C}}\ar@{-->}[d]_-{j_{A,B,C}}  & & \\
		(A+B)\cosmash C \ar@{{ |>}->}[r]_-{h_{(A+B),C}} & A+B+C \ar[r]^-{\Sigma_{(A+B),C}} \ar[rd]_-{\Sigma_{A,B,C}} & (A+B)\times C \\
		& & (A+B)\times (A+C)\times (B+C)\ar[u]_-{1_{A+B}\times \couniv{0}{1_C}\pi_{2}}
		}
	\]
	In particular, replacing $C$ by $B$, the equality
	\[
		S_{2,1}^{A,B}=\left(\couniv{1_A}{1_A}\cosmash 1_B\right) j_{A,A,B}.
	\]
	is obtained.\noproof
\end{proposition}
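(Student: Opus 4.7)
The plan is to exploit the universal property of kernels throughout. For the first assertion, I would first establish that the composite $\Sigma_{(A+B),C}\circ h_{A,B,C}$ vanishes, so that $h_{A,B,C}$ lifts uniquely through the kernel $h_{(A+B),C}$ to give the sought morphism $j_{A,B,C}$. The vanishing follows from the commutativity of the lower triangle in the defining diagram, that is, from the identity
\[
\Sigma_{(A+B),C} = \bigl(1_{A+B}\times \couniv{0}{1_C}\pi_2\bigr)\circ \Sigma_{A,B,C},
\]
where $\pi_2$ denotes the projection onto the $(A+C)$-factor of $(A+B)\times(A+C)\times(B+C)$. This identity can be verified by precomposing with each of the three coproduct injections into $A+B+C$: on $\iota_A$ and $\iota_B$ both sides yield the corresponding injections into $A+B$ (and zero on the $C$-component), while on $\iota_C$ both sides yield the injection into $C$ (and zero on the $A+B$-component). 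Since the factorization of $\Sigma_{(A+B),C}$ through $\Sigma_{A,B,C}$ annihilates $h_{A,B,C}$, the universal property of the kernel $h_{(A+B),C}$ produces a unique $j_{A,B,C}$ with $h_{A,B,C} = h_{(A+B),C}\circ j_{A,B,C}$. The monicity of $j_{A,B,C}$ is then automatic, since $h_{A,B,C}$ is itself a monomorphism and factors through it.

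For the \emph{in particular} clause, I would compare the two morphisms $S_{2,1}^{A,B}$ and $\bigl(\couniv{1_A}{1_A}\cosmash 1_B\bigr)\circ j_{A,A,B}$ by postcomposing with the monomorphism $h_{A,B}$. From the defining square for the folding morphism one reads off
\[
h_{A,B}\circ S_{2,1}^{A,B} = \bigl(\couniv{1_A}{1_A} + 1_B\bigr)\circ h_{A,A,B}.
\]
On the other side, the naturality square for the binary cosmash product, applied to the pair of morphisms $\couniv{1_A}{1_A}\colon A+A\to A$ and $1_B\colon B\to B$, gives $h_{A,B}\circ(\couniv{1_A}{1_A}\cosmash 1_B) = (\couniv{1_A}{1_A} + 1_B)\circ h_{A+A,B}$; combining this with $h_{A+A,B}\circ j_{A,A,B} = h_{A,A,B}$ (which is exactly the triangle defining $j_{A,A,B}$) produces the same composite. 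The desired equality then follows from the monicity of $h_{A,B}$.

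I do not anticipate any genuine obstacle here: the whole argument is a diagram chase resting on the monicity of the canonical cosmash inclusions $h_{-,-}$ and $h_{-,-,-}$ and on the universal property of kernels. The only point requiring a little care is the precise identification of the map $1_{A+B}\times \couniv{0}{1_C}\pi_2$ that fits into the defining triangle for $\Sigma_{(A+B),C}$; once this projection is pinned down correctly, the remainder is mechanical.
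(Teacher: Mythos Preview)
Your proposal is correct. The paper itself gives no proof of this proposition---it is stated with a \verb|\noproof| marker and attributed to~\cite{Actions}---so there is no ``paper's own proof'' to compare against. Your argument is exactly the standard one: verify the factorization $\Sigma_{(A+B),C} = \bigl(1_{A+B}\times \couniv{0}{1_C}\pi_2\bigr)\circ \Sigma_{A,B,C}$ componentwise on the coproduct injections, invoke the universal property of the kernel $h_{(A+B),C}$ to obtain $j_{A,B,C}$, and deduce monicity from that of $h_{A,B,C}$; then for the folding identity, cancel the monomorphism $h_{A,B}$ after unfolding both sides via the defining square for $S_{2,1}^{A,B}$, the naturality square for $\couniv{1_A}{1_A}\cosmash 1_B$, and the triangle $h_{A+A,B}\circ j_{A,A,B}=h_{A,A,B}$. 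Nothing is missing.
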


\begin{lemma}\label{lemma - useful lemma}
	Let $f\colon A\to A'$, $g\colon B\to A'$ and $l\colon B\to B'$ be three morphisms. Then the equation
	\[
		\left(\couniv{f}{g}\cosmash l\right) j_{A,B,B} = S_{2,1}^{A',B'} (f\cosmash g \cosmash l)
	\]
	holds.
\end{lemma}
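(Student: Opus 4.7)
The plan is to derive the identity as a naturality square for $j_{-,-,-}$ followed by post-composition with $\couniv{1_{A'}}{1_{A'}} \cosmash 1_{B'}$ and an appeal to the equality
\[
S_{2,1}^{A',B'} = \left(\couniv{1_{A'}}{1_{A'}} \cosmash 1_{B'}\right) j_{A',A',B'}
\]
recorded in the previous proposition.

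The first step is to establish that $j_{-,-,-}$ is natural in all three slots: for any morphisms $\alpha\colon A\to A'$, $\beta\colon B\to B'$, $\gamma\colon C\to C'$,
\[
j_{A',B',C'} \circ (\alpha\cosmash\beta\cosmash\gamma) = \left((\alpha+\beta)\cosmash\gamma\right) \circ j_{A,B,C}.
\]
This is immediate from the universal property defining $j$: since $h_{A'+B',C'}$ is a monomorphism, it suffices to check the equality after post-composition with it, and on both sides naturality of $h_{-,-,-}$ together with the defining identity $h_{A+B,C}\circ j_{A,B,C}=h_{A,B,C}$ reduces the composite to $(\alpha+\beta+\gamma)\circ h_{A,B,C}$. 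No calculation is involved beyond a diagram chase through the two kernel squares.

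The second step is to specialise the naturality identity to $\alpha=f$, $\beta=g$, $\gamma=l$ (so that the codomain of the vertical arrow becomes $A'\cosmash A'\cosmash B'$), and then post-compose both sides with $\couniv{1_{A'}}{1_{A'}}\cosmash 1_{B'}\colon (A'+A')\cosmash B' \to A'\cosmash B'$. On the left this yields
\[
\left(\couniv{1_{A'}}{1_{A'}}\cosmash 1_{B'}\right) \circ j_{A',A',B'} \circ (f\cosmash g\cosmash l) = S_{2,1}^{A',B'} \circ (f\cosmash g\cosmash l)
\]
by the proposition. On the right, functoriality of the binary cosmash product gives
\[
\left(\couniv{1_{A'}}{1_{A'}}\cosmash 1_{B'}\right) \circ \left((f+g)\cosmash l\right) = \left(\couniv{1_{A'}}{1_{A'}}\circ(f+g)\right)\cosmash l = \couniv{f}{g}\cosmash l,
\]
so the right-hand composite becomes $(\couniv{f}{g}\cosmash l)\circ j_{A,B,B}$. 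Comparing the two sides delivers the lemma.

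There is no genuine obstacle: the only point that warrants attention is the naturality of $j$ in its first two arguments after they are formally merged into $A+B$ via the coproduct functor, but since $j$ is by construction the unique factorisation of $h_{A,B,C}$ through the kernel $h_{A+B,C}$ of $\Sigma_{A+B,C}$, the verification is purely formal.
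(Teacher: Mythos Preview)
Your proof is correct and follows essentially the same route as the paper's: both use the factorisation $S_{2,1}^{A',B'}=\bigl(\couniv{1_{A'}}{1_{A'}}\cosmash 1_{B'}\bigr)\,j_{A',A',B'}$ from the preceding proposition together with a diagram chase through the defining kernel inclusions $h$. The only cosmetic difference is that you isolate the naturality of $j_{-,-,-}$ as a preliminary step and then post-compose, whereas the paper packages the whole verification into a single diagram and concludes by appealing to $h_{A',B'}$ being a monomorphism.
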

\begin{proof}
	The diagram
	\[
		\resizebox{\textwidth}{!}{\xymatrix{
		A\cosmash B \cosmash B \ar[rr]_-{j_{A,B,B}} \ar@/^2pc/[rrr]^-{h_{A,B,B}} \ar[d]^-{f\cosmash g \cosmash l}\ar@/_2.5pc/[ddd]_-{h_{A,B,B}} && (A+B)\cosmash B\ar[r]_-{h_{A+B,B}}\ar[d]|-{\couniv{f}{g}\cosmash l} & (A+B)+B \ar[d]^-{\couniv{f}{g}+l}\\
		A'\cosmash A'\cosmash B' \ar[rr]^-{S_{2,1}^{A',B'}}\ar[dr]_-{j_{A',A',B'}}&& A'\cosmash B' \ar[r]^-{h_{A',B'}} & A'+B'\\
		& (A'+A')\cosmash B'\ar[ru]|-{\couniv{1_{A'}}{1_{A'}}\cosmash 1_{B'}} \ar[rr]_-{h_{(A'+A'),B'}} && (A'+A')\cosmash B'\ar[u]_-{\couniv{1_{A'}}{1_{A'}}+1_{B'}}\\
		A+B+B \ar@/_1pc/[rrru]_-{f+g+l} &&&
		}}
	\]
	commutes, and $h_{A',B'}$ is a monomorphism.
\end{proof}

\begin{lemma}\label{lemma - second useful lemma}
	Let $f\colon A\to A'$ and $g\colon B\to B'$ be two morphisms. Then the equality
	\[
		(f\cosmash g) S_{1,2}^{A,B}= S^{A',B'}_{1,2} (f\cosmash g \cosmash g)
	\]
	holds.
\end{lemma}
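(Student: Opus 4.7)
The plan is to mimic the strategy used for Lemma~\ref{lemma - useful lemma}: since $h_{A',B'}$ is a monomorphism (being a kernel of $\Sigma_{A',B'}$), it suffices to verify the equality after postcomposition with $h_{A',B'}$, and then produce a commutative diagram that witnesses this equality, built out of the defining square for $S_{1,2}^{-,-}$ and the naturality squares for the binary and ternary cosmash products.

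Concretely, I would first use the defining diagram of $S_{1,2}^{A,B}$ to rewrite
\[
h_{A,B} S_{1,2}^{A,B} = (1_A + \couniv{1_B}{1_B}) h_{A,B,B},
\]
and then apply the naturality of $h_{-,-}$ (i.e.\ the bottom-left square in the functoriality diagram for the binary cosmash product) to obtain
\[
h_{A',B'} (f\cosmash g) S_{1,2}^{A,B} = (f+g)(1_A + \couniv{1_B}{1_B}) h_{A,B,B} = (f + \couniv{g}{g}) h_{A,B,B}.
\]
Symmetrically, using the naturality of $h_{-,-,-}$ for the ternary cosmash product followed by the defining diagram of $S_{1,2}^{A',B'}$,
\[
h_{A',B'} S_{1,2}^{A',B'} (f\cosmash g \cosmash g) = (1_{A'} + \couniv{1_{B'}}{1_{B'}})(f+g+g) h_{A,B,B} = (f + \couniv{g}{g}) h_{A,B,B},
\]
so the two sides coincide after composing with the monomorphism $h_{A',B'}$, and hence coincide.

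I do not expect any real obstacle: unlike Lemma~\ref{lemma - useful lemma}, the statement does not involve the comparison $j_{A,B,C}$ between ternary and binary cosmash products, so no auxiliary factorisation through $(A+B)\cosmash C$ is needed. The only thing one has to be careful with is the identity $(f+g)(1_A + \couniv{1_B}{1_B}) = f + \couniv{g}{g}$ (and its primed analogue), which is a straightforward computation with coproduct coprojections. The whole proof can be packaged, in the style of the previous lemma, as a single commutative diagram whose outer boundary gives the desired equation after cancelling $h_{A',B'}$.
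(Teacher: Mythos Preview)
Your proposal is correct and follows essentially the same approach as the paper: the paper also proves the equality by postcomposing with the monomorphism $h_{A',B'}$ and checking commutativity of a diagram built from the defining square for $S_{1,2}^{-,-}$ together with the naturality squares for $h_{-,-}$ and $h_{-,-,-}$. Your equational computation is exactly what the paper's diagram encodes.
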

\begin{proof}
	The equality holds since the outer diagram
	\[
		\xymatrix@C=5pc{
		& A+B+B\ar[dr]^-{1_A+\couniv{1_B}{1_B}}& \\
		A\cosmash B\cosmash B \ar[ru]^-{h_{A,B,B}}\ar[r]_-{S_{1,2}^{A,B}}\ar[d]^-{f\cosmash g\cosmash g} \ar@/_2.5pc/[dd]_-{h_{A,B,B}} & A\cosmash B\ar[d]|-{f\cosmash g} \ar@{{ |>}->}[r]_-{h_{A,B}} & A+B\ar[d]^-{f+g} \\
		A' \cosmash B' \cosmash B'\ar[r]^-{S_{1,2}^{A',B'}} \ar[dr]|-{h_{A',B',B'}} & A'\cosmash B'\ar@{{ |>}->}[r]^-{h_{A',B'}} & A'+B'\\
		A+B+B \ar[r]_-{f+g+g}	& A'+B'+B'\ar[ru]|-{1_{A'}+\couniv{1_{B'}}{1_{B'}}} &
		}
	\]
	commutes, and $h_{A',B'}$ is a monomorphism.
\end{proof}
The next lemma shows how the ternary cosmash product appears naturally when taking a cosmash product of a coproduct.
\begin{lemma}\label{lemma - covering of A+B flat C}\cite{HVdL,Actions}
	The induced morphism
	\[
		\counivtr{j_{A,B,C}}{\iota_1\cosmash 1_C}{\iota_2 \cosmash 1_C}\colon (A\cosmash B \cosmash C)+ (A\cosmash C) +(B\cosmash C) \to (A+B)\cosmash C
	\]
	where $\iota_1$ and $\iota_2$ are the canonical inclusions of the coproduct $A+B$, is a regular epimorphism.\noproof
\end{lemma}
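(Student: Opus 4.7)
The plan is to realise $(A+B)\cosmash C$ as the kernel of $\Sigma_{A+B,C}$ and to cover this kernel by the three given pieces via a kernel chase combined with the protomodular short five lemma.

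After identifying $(A+B)+C$ with $A+B+C$ via coproduct associativity, the comparison morphism $\Sigma_{A+B,C}$ factors as the composite
\[
A+B+C \xrightarrow{\Sigma_{A,B,C}} (A+B)\times(A+C)\times(B+C) \xrightarrow{q} (A+B)\times C,
\]
where $q\defeq 1_{A+B}\times \couniv{0}{1_C}\pi_{2}$ is precisely the projection used to define $j_{A,B,C}$ in the preceding proposition. Taking kernels along this factorisation yields a short exact sequence
\[
0 \to A\cosmash B\cosmash C \xrightarrow{j_{A,B,C}} (A+B)\cosmash C \to K \to 0,
\]
where $K$ is the regular image inside $\ker(q)$ of $\ker(\Sigma_{A+B,C})$ under $\Sigma_{A,B,C}$.

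Next, I would show that the coproduct $(A\cosmash C)+(B\cosmash C)$ maps regularly onto $K$ through the morphisms induced by $\iota_1\cosmash 1_C$ and $\iota_2\cosmash 1_C$. The key inputs here are Lemma~\ref{lemma - cosmash nice with reg epi}, ensuring that binary cosmash products preserve regular epimorphisms, together with the stability of regular epis under binary coproducts in the semi-abelian category $\C$; coherence of the two partial covers with the diagonal $C$-component is then handled using the folding morphisms $S_{1,2}$ and $S_{2,1}$ via Lemmas~\ref{lemma - useful lemma} and~\ref{lemma - second useful lemma}.

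Finally, let $I$ denote the regular image of $\counivtr{j_{A,B,C}}{\iota_1\cosmash 1_C}{\iota_2\cosmash 1_C}$ inside $(A+B)\cosmash C$. Then $I$ contains the subobject $j_{A,B,C}(A\cosmash B\cosmash C)$ and its composite with the quotient map to $K$ is a regular epimorphism by the previous step. By the protomodular short five lemma, this forces $I=(A+B)\cosmash C$, so that the induced morphism is a regular epimorphism. The main obstacle will be the second step, namely identifying $K$ explicitly and verifying the regular cover: since $\Sigma_{A,B,C}$ need not be a regular epi (so the snake lemma is unavailable in its classical form), one must work carefully with the regular image factorisation, leaning on protomodularity and on the compatibility lemmas already established.
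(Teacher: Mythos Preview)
The paper does not prove this lemma: it is stated with a \verb|\noproof| marker and attributed to the external references \cite{HVdL,Actions}, so there is no in-paper argument to compare against.

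Evaluating your sketch on its own merits, the first and third steps are sound: the factorisation $\Sigma_{A+B,C}=q\,\Sigma_{A,B,C}$ is exactly how $j_{A,B,C}$ is defined in the preceding proposition, and the final short-five-lemma reduction is legitimate in a semi-abelian category. The genuine gap is your second step, which you yourself flag as the ``main obstacle'' but do not resolve. You assert that $(A\cosmash C)+(B\cosmash C)$ maps regularly onto $K$, yet the tools you invoke do not deliver this. Lemma~\ref{lemma - cosmash nice with reg epi} says that cosmash preserves regular epimorphisms, but you have not exhibited any regular epimorphism to which it could be applied. Lemmas~\ref{lemma - useful lemma} and~\ref{lemma - second useful lemma} are purely computational identities describing how the folding maps $S_{2,1}$ and $S_{1,2}$ commute with morphisms of the form $f\cosmash g\cosmash l$; they say nothing about the structure of $K$ or about surjectivity onto it. To make the step work you would at minimum need to identify $K$ concretely---it sits inside $\ker(q)\subseteq 0\times(A+C)\times(B+C)$ as the image of $(A+B)\cosmash C$ under $\Sigma_{A,B,C}$---and then produce an explicit cover, which is essentially the full content of the lemma restated. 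As written, the proposal is an outline that correctly locates the difficulty but does not overcome it.
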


\subsection{Actions and cosmash products}\label{subsec - actions and semidirect product}

A classical way of defining an action of an object $A$ on an object $X$ is to consider a split extension where $X$ is the kernel and $A$ is the cokernel. This information can be encoded in a split short exact sequence such as~\eqref{eq - action via split extension}.
\begin{equation}
	\xymatrix{0 \ar[r] & X \ar@{{ |>}->}[r]^-{k} & E \ar@{-{ >>}}@<.55ex>[r]^-{p} & A \ar[r] \ar@{{ >}->}@<.55ex>[l]^-{s} & 0}.\label{eq - action via split extension}
\end{equation}
Since the category is semi-abelian, we can view this as an algebra over a certain monad $T_A$ \cite{BJK,Bourn-Janelidze:Semidirect}. The object $T_A(X)$ is the kernel $\kappa_{A,X}\colon A\flat X \to A+X$ of the induced morphism $\couniv{1_A}{0}\colon {A+X\to A}$. The components $\eta_{A,X}\colon X\to A\flat X$ of the unit are the morphisms induced by the second inclusion of the binary coproduct: let $X\in \C$, we have $\iota_2 =\kappa_{A,X}\eta_{A,X}\label{eq-unit adjunction monad action}$ (e.g.\ \cite{BJK}).

However, this algebra can be replaced by its so-called \emph{core}~\cite{HLoiseauarXiv,Actions}. Indeed, an algebra $\xi\colon A\flat X\to X$ of the monad $T_A=A\flat-$ satisfies the unit axiom, i.e.\ the right-hand triangle
\[
	\xymatrix@C=3pc @R=2pc{
	0\ar[r] & A\cosmash X\ar@{-->}[dr]_-{\psi} \ar@{{ |>}->}[r]^-{i_{A,X}} & A\flat X \ar@{-{ >>}}@<.55ex>[r]^-{\tau_{A,X}}\ar[d]^-{\xi} & X \ar[r] \ar@{{ >}->}@<.55ex>[l]^-{\eta_{A,X}}\ar[dl]^-{1_X} & 0\\
	&& X &&
	}
\]
commutes, where ${\tau_{A,X}\coloneq \couniv{0}{1_X}\kappa_{A,X}}$ (see for instance~\cite{dMVdL19} for details). Hence the pair of monomorphisms $(i_{A,X},\eta_{A,X})$ is (strongly) epimorphic and therefore the morphism $\xi$ is uniquely determined by its restriction $\psi$ (along $i_{A,X}$) to $A\cosmash X$.

Hence $\psi$ may be viewed as a fragment of $\xi$ containing all its information. This point of view of considering the \defn{action core} $\psi$ of $\xi$ instead of the  ``classical'' action $\xi$ is compatible with an equivalent characterization of internal crossed modules (obtained by Hartl and Van der Linden~\cite{HVdL}) (see Section~\ref{sec - Xmod and cosmash}). This point of view will shorten the proofs in this article.

\begin{remark}[Construction of semi-direct products via action cores]
	In the category of groups, given an action, we can construct a split extension where the middle object is the so-called \defn{semi-direct product of groups} (see for instance \cite{Bourn-Janelidze:Semidirect}).

	As explained in \cite{Actions,HVdL}, with a split extension as in~\eqref{eq - action via split extension} we can associate a unique action core\footnote{Constructing a split extension out of a morphism $A\cosmash X\to X$ is possible under certain conditions (see~\cite{HVdL} for further details).}: given a split extension~\eqref{eq - action via split extension}, we can define a unique ${\psi\colon A\cosmash X \to X}$ as the restriction to the kernels of the induced morphism $\couniv{s}{k}$
	\begin{equation}
		\vcenter{\xymatrix{
		0\ar[r] & A\cosmash X\ar@{-->}[d]_-{\psi} \ar@{{ |>}->}[r]^-{h_{A,X}} & A+X \ar@{->>}[d]^-{\couniv{s}{k}} \ar@{-{ >>}}[r]^-{\Sigma_{A,X}} & A\times X\ar[d]^-{\pi_1}\ar[r] & 0 \\
		0 \ar[r] & X \ar[ru]|-{\iota_2} \ar@{{ |>}->}[r]^-{k} & E \ar@{-{ >>}}@<.5ex>[r]^-{p} & A \ar[r] \ar@{{ >}->}@<.5ex>[l]^-{s} & 0
		}}\label{diag - uniqueness of action for split extension}
	\end{equation}
	By protomodularity, it follows that $\couniv{s}{k}$ is a regular epimorphism. Moreover, we can prove that it is the coequalizer of the pair ($h_{A,X}, \iota_2 \psi$). We usually call the object $E$ the \defn{semi-direct product of $X$ and $A$ along $\psi$} and write it as $X\rtimes_\psi A$.
\end{remark}

\subsection{Internal crossed modules via action cores}\label{sec - Xmod and cosmash}

The first internal definition of crossed modules for a given semi-abelian is given by G.~Janelidze~\cite{Janelidze}. The idea of the definition is to conserve the well-known equivalence of categories between crossed modules and internal categories in groups~\cite{BrownSpencer}. Therefore an internal crossed module (in the sense of G.~Janelidze) is equivalent to an internal category in the given semi-abelian category.

However, this approach involves the monad $A\flat -$ in the construction, and in this article, we will prefer the action core to simplify the proofs. That is the reason why we will consider another, equivalent characterization of internal crossed modules:
\begin{definition}\cite[Theorem 5.6.]{HVdL}\label{thm - XMod and cosmash}
	Let $X$ and $A$ two objects in a semi-abelian category $\C$, consider a morphism $\partial\colon X\to A$ (called the \defn{boundary morphism}) in $\C$ and an action core $\psi\colon A\cosmash X\to X$ associated to a split extension in $\C$ as in~\eqref{eq - action via split extension}. The $4$-tuple $(X,A,\psi,\partial)$ is an \defn{internal crossed module} if the following conditions hold:
	\begin{enumerate}
		\item the \defn{precrossed module condition}, the diagram
		      \[
			      \xymatrix{
			      A\cosmash X\ar[r]^-{1_A\cosmash \partial} \ar[d]_-{\psi} & A\cosmash A\ar[d]^-{\overline{\chi_A}}\\
			      X \ar[r]_-{\partial}& A
			      }
		      \]
		      where $\overline{\chi_A}\coloneq \couniv{1_A}{1_A} h_{A,A}$ is the \defn{conjugation action core}, commutes;
		\item the \defn{Peiffer condition}, the diagram
		      \[
			      \xymatrix{
			      X\cosmash X\ar[r]^-{\partial \cosmash 1_X} \ar[d]_-{\overline{\chi_X}}& A\cosmash X \ar[d]^-{\psi} \\
			      X\ar[r]_-{1_X} & X
			      }
		      \]
		      commutes;
		\item the \defn{ternary commutator condition}, the diagram
		      \[
			      \xymatrix{A\cosmash X \cosmash X\ar[r]^-{\psi_{1,2}^{A,X}}\ar[d]_-{1_A\cosmash \partial \cosmash 1_X} & X\ar[d]^-{1_X} \\
			      A \cosmash A \cosmash X\ar[r]_-{\psi_{2,1}^{A,X}} & X}
		      \]
		      where $\psi_{1,2}^{A,X}\coloneq \psi S_{1,2}^{A,X}$ and $\psi_{2,1}^{A,X}\coloneq \psi S_{2,1}^{A,X}$, commutes.
	\end{enumerate}
	A \defn{crossed module morphism} $(f_X,f_A)\colon(X,A,\psi,\partial)\to (X',A',\psi',\partial')$ is a pair of morphisms $f_X\colon X\to X'$, $f_A\colon A\to A'$ in $\C$ such that $f_A \partial = \partial' f_X$ and the diagram
	\begin{equation}
		\vcenter{\xymatrix{
		A\cosmash X\ar[r]^-{\xi}\ar[d]_-{f_A\cosmash f_X} & X\ar[d]^-{f_X}\\
		A'\cosmash X'\ar[r]_-{\xi'} & X'
		}}\label{diag - morphism XMOD actions}
	\end{equation}
	commutes. The diagram~\eqref{diag - morphism XMOD actions} expresses the \defn{equivariance} of the pair $(f_X,f_A)$ with respect to the action cores $\psi$ and $\psi'$.

	The above data describe a category, denoted by $\XMod(\C)$, called the \defn{category of internal crossed modules in $\C$}.
\end{definition}

\begin{remark}
	The condition expressed by~\eqref{diag - morphism XMOD actions} is equivalent to the existence of the following morphism of split short exact sequences:
	\begin{equation}
		\vcenter{\xymatrix{
		0 \ar[r] & X \ar@{{ |>}->}[r]^-{k} \ar[d]_-{f_X}& X\rtimes_\psi A \ar@{-{ >>}}@<.5ex>[r]^-{p} \ar[d]|-{f_X\rtimes f_A}& A \ar[d]_-{f_A} \ar[r] \ar@{{ >}->}@<.5ex>[l]^-{s} & 0\\
		0 \ar[r] & X' \ar@{{ |>}->}[r]^-{k'} & X'\rtimes_{\psi'} A' \ar@{-{ >>}}@<.5ex>[r]^-{p'} & A' \ar[r] \ar@{{ >}->}@<.5ex>[l]^-{s'} & 0
		}}\label{diag - morphism XMOD SSES}
	\end{equation}
	Here the action cores $\psi$ and $\psi'$ correspond, respectively, to the top and the bottom split extension of $A$ and $A'$.
\end{remark}

\begin{example}[Kernels are internal crossed modules]\label{ex - kernel and XMod}
	A classic example of a crossed module in $\Gp$ is given by a group $G$ and a normal subgroup\footnote{Let's recall that a subgroup is normal if and only if the canonical inclusion is a kernel of a group morphism.} $H$ of $G$. This forms a crossed module with the conjugation action core and the inclusion of $H$ into $G$ as the boundary morphism.

	In a semi-abelian category, if we consider a kernel $k\colon Y\to X$, then it is also a clot~\cite{Agliano-Ursini,Janelidze-Marki-Ursini} and vice versa~\cite{MM-NC}. Hence, there exists a (unique) action $\chi\colon X\flat Y \to Y$ such that the diagram
	\begin{equation}\label{diag - clot def}
		\vcenter{\xymatrix{
		X\flat Y \ar[r]^-{\chi} \ar[d]_-{1_X\flat k} & Y\ar@{{ |>}->}[d]^-{k}\\
		X\flat X \ar[r]_-{\chi_X} & X
		}}
	\end{equation}
	where $\chi_X\coloneq \couniv{1_X}{1_X} \kappa_{X,X}$ (the \emph{conjugation action}), commutes.

	Here, we consider the action cores associated with the actions $\chi$ and $\chi_X$. As a consequence we have
	\[
		\xymatrix{
		X\cosmash Y \ar[r]^-{i_{X,Y}}\ar[d]_-{1_X\cosmash k}& X\flat Y \ar[r]^-{\chi} \ar[d]_-{1_X\flat k} & Y\ar@{{ |>}->}[d]^-{k}\\
		X\cosmash X\ar[r]_-{i_{X,X}} &X\flat X \ar[r]_-{\chi_X} & X
		}
	\]
	We can prove that the quadruple $(X,Y,\chi i_{X,Y},k)$ is a crossed module. For the sake of simplicity, we introduce the notation $\overline{\chi}\coloneq \chi i_{X,Y}$ and $\overline{\chi_X}\coloneq \chi_X i_{X,X}$. In fact, $\overline{\chi}$ is the only action endowing $(X,Y,k)$ with a crossed module structure (see for instance~\cite{DevalVdL}).

	Concerning morphisms between such examples of internal crossed modules, if we consider a pair of morphisms $(f_Y,f_X)$ such that the square
	\[
		\xymatrix{
		Y\ar@{{ |>}->}[r]^-{k}\ar[d]_-{f_Y} & X \ar[d]^-{f_X} \\
		Y'\ar@{{ |>}->}[r]_-{k'} & X'
		}
	\]
	commutes, then this defines a morphism of crossed modules. Indeed, the equivariance of the pair ($f_Y,f_X$) is automatic since we have
	\begin{align*}
		k'f_Y\overline{\chi} & = f_X k \overline{\chi} = f_X \overline{\chi_X}(1_Y\cosmash k) = \overline{\chi_{X'}}(f_X\cosmash f_X)(1_Y\cosmash k) = \overline{\chi_{X'}}(f_X\cosmash f_X k) \\
		                     & = \overline{\chi_{X'}}(f_X\cosmash k'f_Y) = \overline{\chi_{X'}}(1_{X'}\cosmash k')(f_X\cosmash f_Y) = k \overline{\chi'}(f_X\cosmash f_X)
	\end{align*}
\end{example}

\begin{lemma}\label{lemma - equivalence clot cosmash}
	Consider a kernel $k\colon Y\to X$ and the associated action cores $\overline{\chi}$ and $\overline{\chi_X}$ (see Example~\ref{ex - kernel and XMod}). Then $k\overline{\chi}=\couniv{1_X}{k}h_{X,Y}$.
\end{lemma}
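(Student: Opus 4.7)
The plan is to chase the two definitions of action cores (as restrictions of the monad actions along $i_{-,-}$) through the naturality of the cosmash product and through the defining square~\eqref{diag - clot def} of the induced action $\chi$, and then conclude using the definition of the conjugation.

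\medskip

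First I would rewrite the left-hand side by unfolding $\overline{\chi} = \chi \comp i_{X,Y}$ and then applying the commutativity of~\eqref{diag - clot def}, which gives
\[
k\,\overline{\chi} \;=\; k\,\chi\,i_{X,Y} \;=\; \chi_X\,(1_X\flat k)\,i_{X,Y}.
\]
Next I would invoke the naturality of the inclusion $i_{-,-}\colon (-)\cosmash(-)\Rightarrow (-)\flat(-)$ (which holds because $i$ is obtained functorially by restriction of the kernel embeddings $h$ through $\kappa$), so that $(1_X\flat k)\,i_{X,Y} = i_{X,X}\,(1_X\cosmash k)$. Substituting yields
\[
k\,\overline{\chi} \;=\; \chi_X\,i_{X,X}\,(1_X\cosmash k) \;=\; \overline{\chi_X}\,(1_X\cosmash k).
\]

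\medskip

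Then I would plug in the explicit formula $\overline{\chi_X}=\couniv{1_X}{1_X}\,h_{X,X}$ recalled in Example~\ref{ex - kernel and XMod}, and use the naturality square of the binary cosmash product from the very first definition, which says $h_{X,X}\comp(1_X\cosmash k) = (1_X + k)\comp h_{X,Y}$. This gives
\[
k\,\overline{\chi} \;=\; \couniv{1_X}{1_X}\,(1_X+k)\,h_{X,Y} \;=\; \couniv{1_X}{k}\,h_{X,Y},
\]
where the last equality is just the universal property of the coproduct: precomposing $\couniv{1_X}{1_X}$ with $1_X + k$ precisely replaces the second component $1_X$ by $k$.

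\medskip

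There is no real obstacle: every step is either a definition, the hypothesis~\eqref{diag - clot def} that characterizes $\chi$, or the naturality of the canonical inclusion $h$ (resp.\ $i$). The only point to be careful about is that $i_{X,Y}$ is indeed natural in both variables, so that the switch from $(1_X\flat k)\,i_{X,Y}$ to $i_{X,X}\,(1_X\cosmash k)$ is legitimate; this follows from the fact that both $h$ and $\kappa$ are natural kernel inclusions, hence so is their comparison $i$.
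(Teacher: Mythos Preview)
Your proof is correct and follows essentially the same argument as the paper. The only cosmetic difference is that the paper stays at the level of $\flat$ and uses the naturality of $\kappa$ (i.e.\ $\kappa_{X,X}(1_X\flat k)=(1_X+k)\kappa_{X,Y}$) to obtain $\chi_X(1_X\flat k)=\couniv{1_X}{k}\kappa_{X,Y}$, and only then precomposes with $i_{X,Y}$ (using $\kappa_{X,Y}\,i_{X,Y}=h_{X,Y}$), whereas you first pass to the cosmash level via the naturality of $i$ and then use the naturality of $h$; since $h=\kappa\comp i$, the two routes are formally equivalent.
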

\begin{proof}
	It suffices to notice that
	\begin{align*}
		\chi_X (1_X\flat k)= \couniv{1_X}{1_X} \kappa_{X,X}(1_X\flat k)
		=\couniv{1_X}{1_X}(1_X+k)\kappa_{X,Y}=\couniv{1_X}{k}\kappa_{X,Y}
	\end{align*}
	and to compose with $i_{X,Y}$.
\end{proof}

\section{Projective and free objects}\label{sec - projective and free}

In the category of groups, it is well known that free objects are stable under subobjects. This makes it an example of a so-called \defn{Schreier variety of algebras}, i.e., a variety where the free objects are closed under subobjects.

One of the main consequences of being a Schreier variety is that the classes of projective and free objects coincide. Hence, this is necessary for being Schreier.

This necessary condition has been used in~\cite{Carrasco-Homology} to prove that $\XMod(\Gp)$ is not a Schreier variety. We claim the same holds if we replace $\Gp$ by any semi-abelian variety~$\V$. To do so, in Section~\ref{sec - projective object}, we will construct an internal crossed module that is projective, and then compare it to the free objects in $\XMod(\V)$ (cf. Section~\ref{sec - free object}).

\subsection{Projective objects in $\SSE_B(\C)$}\label{subsec-SSEB(V)}

We write $\SSE_B(\C)$ for the category of split extensions over a fixed object $B$ in a semi-abelian category $\C$.

\begin{lemma}\label{lemma - reg epi in H-group}
	Consider an object $B$ in a semi-abelian category $\C$. A morphism $(f,g)$ between two split extensions $(X,E)$ and $(X',E')$ over $B$ is a regular epimorphism in the category $\SSE_B(\C)$ if and only if the morphism $f$ is a regular epimorphism in $\C$.
\end{lemma}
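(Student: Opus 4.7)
I would prove both implications, using: (i) the short five lemma for regular epimorphisms in the semi-abelian category $\C$; (ii) the fact that kernel pairs in $\SSE_B(\C)$ are computed pointwise on the kernel and the middle components; (iii) the observation that any pair $(m,n)$ whose middle component $n$ is a monomorphism in $\C$ is itself a monomorphism in $\SSE_B(\C)$.

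For the implication ``$f$ regular epi in $\C$ $\Rightarrow$ $(f,g)$ regular epi in $\SSE_B(\C)$'', I first note that $g$ is then automatically a regular epimorphism in $\C$: by Bourn protomodularity, $(k',s')$ is jointly strongly epimorphic on $E'$, and the image of $g$ contains both $\Ima(gs)=\Ima s'$ and $\Ima(gk)=\Ima(k'f)=\Ima k'$, the last equality since $f$ is regular epi. To conclude $(f,g)$ is a regular epimorphism in $\SSE_B(\C)$, I show it is the coequalizer of its kernel pair. By (ii), this kernel pair has kernel object $X\times_{X'}X$ and middle object $E\times_{E'}E$, with the induced split extension structure over $B$. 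Given any coequalizing morphism $(h_X,h_E)\colon (X,E)\to (Z,W)$ in $\SSE_B(\C)$, each component coequalizes the corresponding kernel pair in $\C$, so by regularity of $\C$ they factor uniquely as $h_X=h_X^{*}f$ and $h_E=h_E^{*}g$. The pair $(h_X^{*},h_E^{*})$ is then automatically a morphism in $\SSE_B(\C)$ by a quick diagram chase using $k'f=gk$, $gs=s'$, $p'g=p$, and the fact that $f$ and $g$ are epimorphisms.

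For the converse, assume $(f,g)$ is a regular epimorphism in $\SSE_B(\C)$, and factor $g=m_g e_g$ in $\C$ with $e_g$ regular epi and $m_g$ mono. Equip $F=\Ima g$ with the split extension structure over $B$ given by the projection $p'm_g$ (which is a regular epimorphism because $p=p'm_g e_g$ is) and the section $e_g s$. Let $i\colon Y\to F$ be the kernel of $p'm_g$; then $e_g k$ factors through $i$ as $if'$ for a unique $f'\colon X\to Y$, and $m_g i$, which lands in $\kker p'=X'$, factors through $k'$ as $k'\nu$ for a unique monomorphism $\nu\colon Y\to X'$, giving $\nu f'=f$ because $k'$ is mono. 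This yields a factorization $(f,g)=(\nu,m_g)(f',e_g)$ in $\SSE_B(\C)$ in which $(\nu,m_g)$ is a monomorphism by (iii). Since a regular epimorphism factoring through a monomorphism forces that monomorphism to be an isomorphism, $(\nu,m_g)$ is iso in $\SSE_B(\C)$; in particular $\nu$ is iso in $\C$. Applying (i) to the morphism $(f',e_g,1_B)$ of split extensions shows that $f'$ is a regular epimorphism in $\C$, so $f=\nu f'$ is too.

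The main obstacle lies in the second direction: constructing the intermediate split extension $(Y,F)$ and correctly identifying the monomorphism $\nu\colon Y\to X'$ as the one that mediates between the image of $g$ and the image of $f$. Once this construction is in place, the rest of the proof reduces to the semi-abelian short five lemma and the standard fact that a regular epimorphism factoring through a monomorphism forces the latter to be an isomorphism.
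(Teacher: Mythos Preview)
Your proof is correct, but it takes a considerably more hands-on route than the paper. The paper's argument is two sentences long: it invokes (a) the ``well-known'' fact that a morphism of split extensions over $B$ is a regular epimorphism in $\SSE_B(\C)$ if and only if both components $f$ and $g$ are regular epimorphisms in $\C$, and (b) the Short Five Lemma for regular epimorphisms \cite[Lemma 4.2.5]{Borceux-Bourn}, which in a semi-abelian category gives that $f$ is a regular epimorphism if and only if $g$ is. Combining (a) and (b) gives the result immediately.

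You instead reprove both ingredients from scratch. For the forward direction you establish that $g$ is a regular epimorphism via joint strong epimorphicity of $(k',s')$, and then verify explicitly that $(f,g)$ is the coequalizer of its (pointwise-computed) kernel pair. For the converse you construct the regular-epi/mono factorisation of $(f,g)$ inside $\SSE_B(\C)$ by hand, exploit that regular epimorphisms are extremal to force the mono part to be an isomorphism, and only then appeal to the Short Five Lemma. Your approach buys self-containment and makes transparent exactly where protomodularity and regularity intervene; the paper's approach is faster and simply delegates these verifications to the literature. Either is fine for this lemma, though your converse could be shortened: once $(\nu,m_g)$ is an isomorphism, $g=m_g e_g$ is already a regular epimorphism, and a single application of the Short Five Lemma to $(f,g,1_B)$ finishes the job without passing through $f'$.
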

\begin{proof}
	Consider a morphism
	\[
		\xymatrix{
		0 \ar[r] & X \ar[d]^-{f} \ar@{{ |>}->}[r] & E \ar[d]^-{g} \ar@{-{ >>}}@<.55ex>[r] & B \ar@{=}[d] \ar[r] \ar@{{ >}->}@<.55ex>[l] & 0\\
		0 \ar[r] & X' \ar@{{ |>}->}[r] & E' \ar@{-{ >>}}@<.55ex>[r] & B \ar[r] \ar@{{ >}->}@<.55ex>[l] & 0
		}
	\]
	in $\SSE_B(\C)$.

	It is well known that the above morphism of split extensions is a regular epimorphism if and only if $f$ and $g$ are regular epimorphisms in $\C$. In addition, $g$ is a regular epimorphism if and only if $f$ is a regular epimorphism~\cite[Lemma 4.2.5]{Borceux-Bourn} since $\C$ is a semi-abelian category.
\end{proof}

A projective object in the category $\SSE_B(\C)$ is a split extension of $B$ such that any regular epimorphism of split extensions of $B$

\begin{equation}
	\vcenter{\xymatrix{
	0 \ar[r] & A\ar@{->>}[d]^-{f} \ar@{{ |>}->}[r] & C\ar@{->>}[d]^-{g} \ar@{-{ >>}}@<.5ex>[r] & B \ar[r] \ar@{{ >}->}@<.5ex>[l] & 0\\
	0 \ar[r] & Q \ar@/^1pc/@{-->}[u]^-{f'} \ar@{{ |>}->}[r]& Z \ar@/^1pc/@{-->}[u]^-{g'} \ar@{-{ >>}}@<.5ex>[r] & B\ar@{=}[u] \ar[r] \ar@{{ >}->}@<.5ex>[l] & 0
	}}\label{eq - def projective split extension of B}
\end{equation}
is a split epimorphism in $\SSE_B(\C)$ with the morphism of split extensions $(f',g')$ as section. It is important to notice that the identity morphism on the bottom split short exact sequence in~\eqref{eq - def projective split extension of B} is given by the pair $(1_Q,1_Z)$. This implies that $f$ and~$g$ are split epimorphisms in $\C$ with, respectively, $f'$ and $g'$ as sections.

\begin{lemma}\label{lemma - example projective in SSEP}
	For a given object $B$ in a semi-abelian category $\C$, if $X$ is a projective object of $\C$ then
	\[
		\xymatrix{
		0\ar[r] & B\flat X \ar@{{ |>}->}[r]^-{\kappa_{B,X}} & B+X \ar@{-{ >>}}@<.5ex>[r]^-{\couniv{1_B}{0}} & B\ar[r] \ar@{{ >}->}@<.5ex>[l]^-{\iota_1}  & 0
		}
	\]
	is a projective object in $\SSE_B(\C)$.
\end{lemma}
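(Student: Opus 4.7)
The plan is to identify $(B\flat X,\ B+X)$ as the ``free'' split extension over $B$ on the object $X$. Concretely, for any split extension $(K, Y, p, s)$ over $B$, I would argue that a morphism of split extensions $h\colon (B\flat X, B+X)\to (K, Y)$ is determined by its restriction $h\iota_2\colon X\to Y$, since the constraint $h\iota_1 = s$ fixes the first coproduct component; and this restriction factors uniquely through $k_Y\colon K\to Y$ because $p\, h\iota_2 = \couniv{1_B}{0}\iota_2 = 0$. Conversely, any morphism $\bar h\colon X\to K$ in $\C$ extends to a morphism of split extensions via $\couniv{s}{k_Y \bar h}\colon B+X\to Y$. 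This sets up a natural bijection between morphisms $(B\flat X, B+X)\to (K, Y)$ in $\SSE_B(\C)$ and morphisms $X\to K$ in $\C$.

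With this in hand, the proof proceeds as follows. Take a regular epimorphism $(f,g)\colon (A, C)\twoheadrightarrow (Q, Z)$ in $\SSE_B(\C)$ together with a morphism $h\colon (B\flat X, B+X)\to (Q, Z)$. By Lemma~\ref{lemma - reg epi in H-group}, the kernel component $f\colon A\to Q$ is a regular epimorphism in $\C$. By the observation above, the morphism $h$ corresponds to some $\bar h\colon X\to Q$, and since $X$ is projective in $\C$ there exists a lift $\tilde h\colon X\to A$ with $f\tilde h = \bar h$. Converting $\tilde h$ back, I form $\tilde\phi \defeq \couniv{s_C}{k_C \tilde h}\colon B+X\to C$; a routine check on the two coproduct inclusions shows that $\tilde\phi$ is a morphism of split extensions and that $g\tilde\phi = h$, giving the required lift.

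Conceptually, the whole argument is just the statement that the functor $X\mapsto (B\flat X, B+X)$ is left adjoint to the kernel functor $\ker\colon \SSE_B(\C)\to \C$, and that this right adjoint sends regular epimorphisms to regular epimorphisms by Lemma~\ref{lemma - reg epi in H-group}; consequently, the left adjoint sends projectives to projectives. I do not expect a real obstacle: the only point requiring care is the identification in the first paragraph, which rests solely on the universal property of the coproduct $B+X$ together with the universal property of $K = \ker p$.
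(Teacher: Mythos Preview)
Your argument is correct. The identification of $\Hom_{\SSE_B(\C)}\bigl((B\flat X, B+X),(K,Y)\bigr)$ with $\Hom_\C(X,K)$ is exactly the adjunction $B+(-)\dashv \ker$, and your verification that $g\tilde\phi = h$ on the two coproduct inclusions is sound.

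Compared with the paper: the paper's written proof does not check the general lifting property but instead shows the equivalent statement that every regular epimorphism \emph{onto} $(B\flat X, B+X)$ in $\SSE_B(\C)$ admits a section. Concretely, given $(f_L,f_Y)\colon (L,Y)\twoheadrightarrow (B\flat X,B+X)$, projectivity of $X$ gives $h\colon X\to Y$ with $f_Y h=\iota_2$, and then $\couniv{t}{h}\colon B+X\to Y$ is checked to be a section. Your approach and the paper's thus build the same morphism $\couniv{s}{-}$; the difference is only in which characterisation of projectivity is being verified. In fact the paper explicitly records your adjunction argument as an ``alternative proof'' in the Remark immediately following the lemma, so you have independently found the route the author singles out as the conceptual explanation. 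Your version has the minor advantage of not relying on the equivalence between the two formulations of projectivity, at the cost of setting up the bijection in the first paragraph.
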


\begin{proof}
	We consider a morphism of split short exact sequences where both $f_L$ and $f_Y$ are regular epimorphisms in $\C$.
	\[
		\xymatrix{
		0 \ar[r] &L  \ar@{{ |>}->}[r]_-k \ar@{->>}[d]^-{f_L} & Y \ar@{-{ >>}}@<.5ex>[r]^-g \ar@{->>}[d]^-{f_Y} & B \ar@{{ >}->}@<.5ex>[l]^-t \ar[r] \ar@{=}[d]& 0\\
		0\ar[r] & B\flat X \ar@/^1pc/@{-->}[u]^-{\overline{\couniv{t}{h}}} \ar@{{ |>}->}[r]^-{\kappa_{B,X}} & B+X \ar@/^1pc/@{-->}[u]^-{\couniv{t}{h}} \ar@{-{ >>}}@<.5ex>[r]^-{\couniv{1_B}{0}} & B \ar[r]\ar@{{ >}->}@<.5ex>[l]^-{\iota_1} & 0
		}
	\]

	Since $X$ is a projective object in $\C$, there exists $h\colon X\to Y$ such that $\iota_2=f_Y h$. Hence the induced morphism $\couniv{t}{h}\colon B+X\to Y$ is a section of $f_Y$:
	\[f_Y \couniv{t}{h}\iota_2 = f_Y h =\iota_2 \text{ and } f_Y \couniv{t}{h}\iota_1 = f_Y t =\iota_1.\]
	Moreover, we have $\couniv{t}{h}\iota_1=t$ and also $g\couniv{t}{h}=\couniv{1_B}{0}$ since
	\[g\couniv{t}{h}\iota_2=gh=\couniv{1_B}{0}f_Y h=\couniv{1_B}{0}\iota_2 \text{ and } g\couniv{t}{h}\iota_1=gt=1_B=\couniv{1_B}{0}\iota_1.\]
	This implies in particular the existence of the morphism $\overline{\couniv{t}{h}}$ which is the restriction of $\couniv{t}{h}$ to the kernels.

	It is obvious that the pair $(\overline{\couniv{t}{h}},\couniv{t}{h})$ defines a morphism in $\SSE_B(\V)$. Moreover, ${\overline{\couniv{t}{h}}}$ is a section of $f_L$.
\end{proof}
\begin{remark}
	The previous result can also be reformulated as follows: for any object $B$ in a semi-abelian category $\C$, the free $B$-action on a projective object is itself projective in $\SSE_B(\C)$. Here, the freeness refers to the adjunction associated with the monad $T_B$ (e.g.\ \cite{BJK}). An alternative proof is to show that the right adjoint $\ker\colon \Pt_B(\C)\to \C$ (where $\Pt_B(\C)$ denotes the category of points over $B$) preserves regular epimorphisms. As a result, the left adjoint $B+(-)\colon \C\to \Pt_B(\C)$ preserves projective objects (see for instance~\cite{Hilton-Stammbach}).
\end{remark}

\begin{lemma}\label{lemma - H-group enough projectives}
	For a given object $B$ in a semi-abelian category $\C$ with enough projectives, the category $\SSE_B(\C)$ has enough projectives.
\end{lemma}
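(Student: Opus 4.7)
The plan is to produce, for any split extension over $B$, a regular epimorphism in $\SSE_B(\C)$ from a split extension of the form appearing in Lemma~\ref{lemma - example projective in SSEP}, which is already known to be projective.

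First I would fix an arbitrary split extension
\[
\xymatrix{0 \ar[r] & L \ar@{{ |>}->}[r]^-{k} & Y \ar@{-{ >>}}@<.55ex>[r]^-{g} & B \ar@{{ >}->}@<.55ex>[l]^-{t} \ar[r] & 0}
\]
of $B$ in $\C$ and, using that $\C$ has enough projectives, choose a regular epimorphism $p\colon X \twoheadrightarrow L$ from a projective object $X$ of $\C$. The candidate cover is then the ``free'' split extension $(B\flat X, B+X)$ of Lemma~\ref{lemma - example projective in SSEP}, which is projective in $\SSE_B(\C)$. To cover the given split extension by it, I would use the universal morphism $\couniv{t}{kp}\colon B+X \to Y$; a direct check shows $\couniv{t}{kp}\iota_1 = t$ and $g\couniv{t}{kp}=\couniv{1_B}{0}$, so this morphism, together with its restriction to the kernels of $\couniv{1_B}{0}$ and $g$, defines a morphism in $\SSE_B(\C)$.

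Next I would verify that this morphism is a regular epimorphism in $\SSE_B(\C)$. By Lemma~\ref{lemma - reg epi in H-group} it suffices to show the induced map on the kernel parts is a regular epimorphism in~$\C$. This induced map is exactly the composite
\[
\xymatrix{B\flat X \ar[r]^-{1_B \flat p} & B\flat L \ar[r]^-{\xi} & L,}
\]
where $\xi$ is the action of $B$ on $L$ associated to the original split extension; indeed $\xi$ is obtained by restricting $\couniv{t}{k}$ to kernels. The morphism $\xi$ is a split epimorphism (with section $\eta_{B,L}$) by the unit axiom, hence a regular epimorphism. For the morphism $1_B \flat p$, I would use that $1_B + p\colon B+X \to B+L$ is a regular epimorphism (the functor $B+(-)$ is a left adjoint) and apply the fact that a morphism of short exact sequences in a semi-abelian category whose middle component is a regular epimorphism and whose right component is an isomorphism has a regular epimorphism as its left component (this is again~\cite[Lemma~4.2.5]{Borceux-Bourn}, applied to the short exact sequences with $B\flat X$, $B\flat L$ on the left, $B$ on the right). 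Composing, the induced map on the kernels is a regular epimorphism, and the proof is complete.

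The main obstacle is essentially bookkeeping: namely, identifying the restriction of $\couniv{t}{kp}$ to kernels as the composite $\xi \circ (1_B \flat p)$ and confirming the preservation of regular epimorphisms by $B\flat -$; both points are routine but require a careful use of the description of the action $\xi$ as the comparison induced by $\couniv{t}{k}$.
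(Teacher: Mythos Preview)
Your proposal is correct and follows essentially the same approach as the paper: cover a given split extension by the ``free'' split extension $(B\flat X, B+X)$ on a projective cover $X$ of the kernel, using Lemma~\ref{lemma - example projective in SSEP} for projectivity and Lemma~\ref{lemma - reg epi in H-group} to reduce to the kernel component. The only minor difference is in justifying that the induced map on kernels is a regular epimorphism: the paper observes (via its diagram) that precomposing this map with the unit $\eta_{X}$ yields the chosen regular epimorphism $p$, whence the map itself is a regular epimorphism; your decomposition $\xi\circ(1_B\flat p)$ is an equally valid, slightly more explicit route to the same conclusion.
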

\begin{proof}
	Let's consider the split extension
	\begin{equation}
		\xymatrix{
		0\ar[r] & X \ar@{{ |>}->}[r]^-k & X\rtimes_\psi B \ar@{-{ >>}}@<.5ex>[r]^-f & B\ar[r] \ar@{{ >}->}@<.5ex>[l]^-s & 0}.\label{diag - starting SSES A}
	\end{equation}

	Since the category $\C$ has enough projectives, the object $X$ is a regular quotient via $p$ of a projective object $R_X$. We have
	\begin{equation}
		\begin{aligned}
			\xymatrix{
			0\ar[r]  & B\flat R_X \ar@{-->>}@/_2pc/[dd]_-{\overline{\couniv{s}{kp}}}\ar@{{ |>}->}[dr]^-{\kappa_{B,R_X}} &                                                                         &                                                         \\
			         & R_X\ar[u]_-(0.3){\eta_{R_X}}\ar[r]^-{\iota_2}\ar@{->>}[d]^-{p}                                   & B+R_X\ar[d]^-{\couniv{s}{kp}} \ar@{-{ >>}}@<.5ex>[r]^-{\couniv{1_B}{0}} & B\ar@{=}[d]\ar@{{ >}->}@<.5ex>[l]^-{\iota_1} \ar[r] & 0 \\
			0 \ar[r] & X \ar@{{ |>}->}[r]_-k                                                                            & X\rtimes_\phi B \ar@{-{ >>}}@<.5ex>[r]^-f                               & B \ar@{{ >}->}@<.5ex>[l]^-s \ar[r]                  & 0
			}\label{diag-SSES A regular quotient}
		\end{aligned}
	\end{equation}
	where $\overline{\couniv{s}{kp}}$ is the restriction of $\couniv{s}{kp}$ to the kernels and $\eta_{R_X}$ is the unit component of the monad $T_B$ (see p~\pageref{eq-unit adjunction monad action}). Since $\overline{\couniv{s}{kp}}$ is a regular epimorphism, by Lemma~\ref{lemma - reg epi in H-group}, we have a regular epimorphism in $\SSE_B(\C)$.

	This proves that the bottom split extension of $B$ in~\eqref{diag-SSES A regular quotient} is a regular quotient of the top split extension. Finally, this top split extension of $B$ is a projective object in $\SSE_B(\C)$ by Lemma~\ref{lemma - example projective in SSEP}.
\end{proof}

An important class of semi-abelian categories having enough projectives is the class of semi-abelian varieties of algebras. Let $\V$ be such a variety. Then any object $X\in \V$ is the regular quotient of the free object $F_r(U(X))$ where $U(X)$ is the underlying set. As a consequence of the proof above, the split extension
\begin{equation}
	\xymatrix@C=4pc{
	0\ar[r] & B\flat F_r(U(X)) \ar@{{ |>}->}[r]^-{\kappa_{B,F_r(U(X))}} & B+F_r(U(X)) \ar@{-{ >>}}@<.5ex>[r]^-{\couniv{1_B}{0}} & B \ar[r]\ar@{{ >}->}@<.5ex>[l]^-{\iota_1} & 0
	}
	\label{eq - prototype projective in SSEP(C)}
\end{equation}
of $B$ is a projective regular cover of \eqref{diag - starting SSES A} in $\SSE_B(\V)$.

\begin{corollary}
	For a given object $B$ in a semi-abelian category $\C$ with enough projectives, any projective object in $\SSE_B(\C)$ as~\eqref{diag - starting SSES A} is a retract of~\eqref{eq - prototype projective in SSEP(C)}.\noproof
\end{corollary}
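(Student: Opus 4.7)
The plan is to exhibit the retract directly, by combining the projective cover constructed in the proof of Lemma~\ref{lemma - H-group enough projectives} with the defining lifting property of a projective object.

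First, I would take the given projective split extension~\eqref{diag - starting SSES A} with kernel $X$ and feed it into the construction from the proof of Lemma~\ref{lemma - H-group enough projectives}. Since $\C$ has enough projectives, pick a regular epimorphism $p\colon R_X \twoheadrightarrow X$ from a projective object of $\C$; in the varietal case one specialises to $R_X = F_r(U(X))$, recovering the display~\eqref{eq - prototype projective in SSEP(C)} verbatim. The proof of Lemma~\ref{lemma - H-group enough projectives} then supplies a morphism of split extensions
\[
(\overline{\couniv{s}{kp}},\,\couniv{s}{kp})\colon
\bigl(0 \to B\flat R_X \to B+R_X \to B \to 0\bigr)
\longrightarrow
\bigl(0 \to X \to X\rtimes_\psi B \to B \to 0\bigr),
\]
whose kernel component $\overline{\couniv{s}{kp}}$ is a regular epimorphism in $\C$. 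By Lemma~\ref{lemma - reg epi in H-group}, this is a regular epimorphism in $\SSE_B(\C)$.

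Second, I would invoke projectivity of~\eqref{diag - starting SSES A} in $\SSE_B(\C)$: the identity morphism on~\eqref{diag - starting SSES A} must lift along the regular epimorphism just produced. This lift is precisely a morphism of split extensions from~\eqref{diag - starting SSES A} into~\eqref{eq - prototype projective in SSEP(C)} whose composite with $(\overline{\couniv{s}{kp}},\couniv{s}{kp})$ is the identity. Such a lift exhibits~\eqref{diag - starting SSES A} as a retract of~\eqref{eq - prototype projective in SSEP(C)} inside $\SSE_B(\C)$, as required.

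There is essentially no obstacle: the only point that might look delicate is making sure that the notion of regular epimorphism matches up between $\C$ and $\SSE_B(\C)$, and that is exactly what Lemma~\ref{lemma - reg epi in H-group} provides. Everything else is a single application of the lifting property. Consequently, the corollary follows by a two-line argument, and the \texttt{\textbackslash noproof} marker in the statement is justified.
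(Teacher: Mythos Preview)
Your argument is correct and is precisely the intended one: the corollary is marked \texttt{\noproof} because it is the standard consequence of combining the projective cover from Lemma~\ref{lemma - H-group enough projectives} with the lifting property of a projective object, exactly as you describe. Your remark that $R_X$ specialises to $F_r(U(X))$ in the varietal case is also the right way to reconcile the general hypothesis with the specific form of~\eqref{eq - prototype projective in SSEP(C)}.
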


\subsection{A projective object in internal crossed modules}\label{sec - projective object}
Carrasco, Cegarra, and Grandje\'an construct a projective object in $\XMod(\Gp)$ which is not a free crossed module.
\begin{proposition}\cite{Carrasco-Homology}
	In $\XMod(\Gp)$, if $P$ is a projective group and $Q$ is a projective $P$-group then the inclusion morphism $Q\to Q\rtimes P$ is a projective crossed module.\noproof
\end{proposition}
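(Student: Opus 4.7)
The plan is to lift the two components of a test morphism separately, exploiting the two projectivity hypotheses on $P$ and $Q$ in turn. Suppose given a regular epimorphism $(f_Y, f_B)\colon (Y, B, \psi', \partial') \to (X, A, \psi, \partial)$ in $\XMod(\Gp)$ together with a morphism $(g_Q, g_{Q\rtimes P})\colon (Q, Q\rtimes P, \overline{\chi}, \iota) \to (X, A, \psi, \partial)$ to be lifted. Since $f_B$ is a regular epimorphism of groups and $P$ is projective, the composite $g_{Q\rtimes P} \circ s\colon P \to A$ (where $s\colon P \to Q\rtimes P$ is the canonical section) admits a lift $h_P\colon P \to B$ along $f_B$.

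Using $h_P$, endow $Y$ with a $P$-action by pulling back $\psi'$, and similarly endow $X$ with a $P$-action pulled back along $f_B h_P = g_{Q\rtimes P} s$. The equivariance of $(f_Y, f_B)$ as a crossed module morphism then makes $f_Y$ into a $P$-equivariant regular epimorphism, which by Lemma~\ref{lemma - reg epi in H-group} yields a regular epimorphism in $\SSE_P(\Gp)$. Likewise, the equivariance of $(g_Q, g_{Q\rtimes P})$ combined with the canonical $P$-action on $Q$ arising from $s$ shows that $g_Q$ defines a morphism in $\SSE_P(\Gp)$ with target this pulled-back action on $X$. Projectivity of $Q$ in $\SSE_P(\Gp)$ now supplies a $P$-equivariant lift $h_Q\colon Q \to Y$ with $f_Y h_Q = g_Q$.

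The two morphisms $h_P\colon P \to B$ and $\partial' h_Q\colon Q \to B$ can then be assembled into a single $h_{Q\rtimes P}\colon Q\rtimes P \to B$ via the universal property of the semi-direct product, provided conjugation of $\partial' h_Q(q)$ by $h_P(p)$ in $B$ coincides with $\partial' h_Q(p \cdot q)$; this compatibility follows from the $P$-equivariance of $h_Q$ together with the precrossed module condition of $(Y, B, \psi', \partial')$ applied to the pair $(h_P(p), h_Q(q))$. The identity $f_B h_{Q\rtimes P} = g_{Q\rtimes P}$ is automatic from $f_B h_P = g_{Q\rtimes P} s$ and $\partial f_Y = f_B \partial'$. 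The main subtlety is the final verification that $(h_Q, h_{Q\rtimes P})$ is genuinely a morphism of crossed modules, i.e., that $h_Q$ is equivariant with respect to the conjugation action $\overline{\chi}$ of $Q\rtimes P$ on $Q$ and the $B$-action on $Y$ along $h_{Q\rtimes P}$. Since $Q\rtimes P$ is generated by $s(P)$ together with $Q$, it suffices to verify this on each piece: on $s(P)$ equivariance reduces to the established $P$-equivariance of $h_Q$, while on $Q$ it follows from the Peiffer condition in the target crossed module $(Y, B, \psi', \partial')$ combined with the fact that $h_Q$ is a group homomorphism. This last step is where both target axioms come into play simultaneously; everything else amounts to careful bookkeeping with the two projectivity liftings.
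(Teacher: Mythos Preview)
Your argument is correct and mirrors the strategy of the paper's proof of the generalisation in Theorem~\ref{thm - projective in XMod(C)} (the proposition itself is only cited, not proved, in the paper): first lift along the $P$-component using projectivity of $P$, then use the resulting map to pull back the action and lift along the kernel component using projectivity of $Q$ as a $P$-object, then assemble the codomain component via the universal property of the semi-direct product, and finally check equivariance. The only cosmetic differences are that you verify the general lifting property rather than merely splitting a regular epimorphism onto the object (equivalent formulations of projectivity), and that you argue elementwise in $\Gp$---checking equivariance on the generators $s(P)$ and $\iota(Q)$ of $Q\rtimes P$ separately---whereas the paper handles the analogous step categorically via the covering of $(Q\rtimes_\psi P)\cosmash Q$ by ternary and binary cosmash pieces (Lemma~\ref{lemma - covering of A+B flat C}); your $s(P)$-case corresponds to the paper's $\iota_2$-check, your $\iota(Q)$-case to the $\iota_3$-check, and the compatibility needed to define $h_{Q\rtimes P}$ absorbs what the paper does with $\iota_1$ and the ternary commutator condition.
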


This construction can also be done in $\XMod(\V)$, for any semi-abelian category~$\V$.

\begin{theorem}\label{thm - projective in XMod(C)}
	If $P$ is a projective object in $\C$ and if the split extension
	\begin{equation}
		\xymatrix{
		0 \ar[r] & Q \ar@{{ |>}->}[r]^-{\partial'} & Z \ar@{-{ >>}}@<.5ex>[r]^-{p} & P \ar[r] \ar@{{ >}->}@<.5ex>[l]^-{s} & 0
		} \label{eq - split ext of P}
	\end{equation}
	is a projective object in the category of split extensions of $P$ (see for instance~\eqref{eq - def projective split extension of B}), then the kernel $\partial'$, endowed with the natural internal crossed module structure of Example~\ref{ex - kernel and XMod}, is a projective object in $\XMod(\C)$.
\end{theorem}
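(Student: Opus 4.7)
The plan is to reduce the lifting problem in $\XMod(\C)$ to one in $\SSE_P(\C)$, where we can invoke the projectivity of $0\to Q\to Z\to P\to 0$. Fix a regular epimorphism $(f_X,f_A)\colon (X,A,\psi,\partial)\to (X',A',\psi',\partial'')$ in $\XMod(\C)$---so that in particular $f_X$ and $f_A$ are regular epimorphisms in $\C$---together with a morphism $(q_Q,q_Z)\colon (Q,Z,\overline{\chi},\partial')\to (X',A',\psi',\partial'')$; the goal is to build a lift $(\tilde q_Q,\tilde q_Z)$.

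Since $P$ is projective in $\C$, lift $q_Z s\colon P\to A'$ along $f_A$ to some $\tilde\sigma\colon P\to A$. Pulling back the split extension $0\to X\to X\rtimes_\psi A\to A\to 0$ along $\tilde\sigma$ and the split extension $0\to X'\to X'\rtimes_{\psi'} A'\to A'\to 0$ along $f_A\tilde\sigma=q_Z s$ produces split extensions $(X,E,P)$ and $(X',E',P)$ in $\SSE_P(\C)$, and the induced morphism $(X,E,P)\to(X',E',P)$ is a regular epimorphism in $\SSE_P(\C)$ by Lemma~\ref{lemma - reg epi in H-group}.

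Next, I construct a morphism $(q_Q,\alpha)\colon (Q,Z,P)\to (X',E',P)$ in $\SSE_P(\C)$. By uniqueness of action cores, the action core $\phi\colon P\cosmash Q\to Q$ of the given split extension satisfies $\phi=\overline{\chi}(s\cosmash 1_Q)$; combining this with equivariance of $q_Q$ in the crossed module morphism $(q_Q,q_Z)$ yields $q_Q\phi=\psi'(q_Z s\cosmash q_Q)$. This is precisely the condition needed so that $\couniv{s_{A'}q_Z s}{k'q_Q}\colon P+Q\to X'\rtimes_{\psi'}A'$ descends to a morphism $\beta\colon Z\to X'\rtimes_{\psi'}A'$ projecting to $q_Z sp$ on $A'$; pairing $\beta$ with $p$ yields $\alpha\colon Z\to E'$, and $(q_Q,\alpha)$ is then a morphism in $\SSE_P(\C)$. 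The projectivity of $(Q,Z,P)$ in $\SSE_P(\C)$ lifts this to $(\tilde q_Q,\tilde\alpha)\colon (Q,Z,P)\to (X,E,P)$, and composing $\tilde\alpha$ with $E\to X\rtimes_\psi A$ produces $\tilde\beta\colon Z\to X\rtimes_\psi A$ satisfying $\tilde\beta\partial'=k\tilde q_Q$ and $\tilde\beta s=s_A\tilde\sigma$. The precrossed module axiom of $(X,A,\psi,\partial)$ ensures that $\couniv{1_A}{\partial}\colon A+X\to A$ coequalizes the relation defining $X\rtimes_\psi A$, giving a morphism $\mu\colon X\rtimes_\psi A\to A$ with $\mu k=\partial$ and $\mu s_A=1_A$; define $\tilde q_Z\coloneq \mu\tilde\beta$. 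A routine verification, using the analogous $\mu'$ for the target and the equality $f_A\mu=\mu'(f_X\rtimes f_A)$, shows that $f_A\tilde q_Z=q_Z$ and $\partial\tilde q_Q=\tilde q_Z\partial'$.

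The main obstacle is verifying the equivariance of $(\tilde q_Q,\tilde q_Z)$, namely $\tilde q_Q\overline{\chi}=\psi(\tilde q_Z\cosmash\tilde q_Q)$. By Lemma~\ref{lemma - covering of A+B flat C} (with $A=P$, $B=C=Q$) together with Lemma~\ref{lemma - cosmash nice with reg epi}, the induced morphism from $(P\cosmash Q\cosmash Q)+(P\cosmash Q)+(Q\cosmash Q)$ to $Z\cosmash Q$ is a regular epimorphism, so equivariance reduces to checking it on each of these three summands. The $Q\cosmash Q$ summand reduces, via the Peiffer conditions of both crossed modules, to the naturality of the conjugation action core with respect to $\tilde q_Q$; the $P\cosmash Q$ summand reduces to $\tilde q_Q\phi=\psi(\tilde\sigma\cosmash\tilde q_Q)$, which is precisely the compatibility encoded in $\tilde\alpha$ being a morphism in $\SSE_P(\C)$; and the $P\cosmash Q\cosmash Q$ summand follows from the ternary commutator conditions of both crossed modules, combined with Lemmas~\ref{lemma - useful lemma} and~\ref{lemma - second useful lemma}.
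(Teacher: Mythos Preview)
Your argument is correct and follows essentially the same route as the paper: lift the section $s$ using projectivity of $P$, transport the problem to $\SSE_P(\C)$ via the resulting $P$-actions, use projectivity there to obtain the kernel-level lift, build the base-level lift from the precrossed module structure, and then verify equivariance on the three summands of the cover of $Z\cosmash Q$ coming from Lemma~\ref{lemma - covering of A+B flat C}. The only cosmetic differences are that the paper works with the ``every regular epimorphism onto the object splits'' formulation of projectivity (so the target crossed module is $(Q,Z)$ itself and the given morphism is the identity), and that the paper constructs the analogue of your $\tilde q_Z$ via the coequalizer description of $Q\rtimes_\psi P$ rather than through the morphism $\mu\colon X\rtimes_\psi A\to A$; both rest on the same precrossed module condition.
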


Before doing the proof, we need to work on some preliminary results.
\begin{remark}
	As explained in Section~\ref{subsec - actions and semidirect product}, without loss of generality we can replace the object $Z$ by $Q\rtimes_{\psi} P$, where $\psi$ is the unique action core associated to the extension~\eqref{eq - split ext of P}.

	By Example~\ref{ex - kernel and XMod}, we can see the triple $(Q, Q\rtimes_\psi P,\partial')$ as an internal crossed module.
\end{remark}
Consider a regular epimorphism $(f_T,f_G)\colon (T,G, \phi, \partial)\to (Q,Q\rtimes_\psi P, \overline{\chi}, \partial')$ in $\XMod(\C)$:
\[
	\xymatrix{
	T\ar[r]^-{\partial} \ar@{->>}[d]_-{f_T} & G\ar@{->>}[d]^-{f_G} & \\
	Q\ar@{{ |>}->}[r]_-{\partial'} & Q \rtimes_\psi P \ar@<.5ex>[r]^-{p} & P \ar@<.5ex>[l]^-{s}
	}
\]
that is, a morphism where $f_G$ and $f_T$ regular epimorphisms in $\C$ (see e.g.\ \cite{dMVdL19.4}). To prove that $(Q,Q\rtimes_\psi P, \overline{\chi}, \partial')$ is a projective object, it suffices to find a section in $\XMod(\C)$ for the pair of morphisms $(f_T,f_G)$.

The proof will be divided into several consecutive lemmas.

In the following lemmas, we will use the same notations and assumptions as in the statement of Theorem~\ref{thm - projective in XMod(C)}.

\begin{lemma}[Lifting over $f_G$]
	There exists a lifting $g_1$ of $s$ over $f_G$.
\end{lemma}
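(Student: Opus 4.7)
The plan is to invoke the projectivity of $P$ directly. We are given that $f_G\colon G\twoheadrightarrow Q\rtimes_\psi P$ is a regular epimorphism in $\C$ (this is part of the definition of a regular epimorphism in $\XMod(\C)$, as recalled just before the lemma), and the section $s\colon P\to Q\rtimes_\psi P$ is a morphism of $\C$.

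Since $P$ is assumed projective in $\C$, the universal property of projective objects applied to the regular epimorphism $f_G$ and to the map $s\colon P\to Q\rtimes_\psi P$ yields a morphism $g_1\colon P\to G$ such that $f_G\, g_1 = s$. This $g_1$ is by definition a lifting of $s$ over $f_G$.

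No further structure is used at this stage: we do not yet require $g_1$ to interact with the boundary maps or the action cores, nor with the split extension structure on $G$ (if any). The main obstacle is not here but in subsequent lemmas, where this bare lifting $g_1$ will have to be promoted to a genuine morphism of split extensions over $P$ (lifting also the kernel data through $f_T$ using the projectivity in $\SSE_P(\C)$ established in Section~\ref{subsec-SSEB(V)}), and then shown to be compatible with the action cores and boundary morphisms so as to assemble into a section in $\XMod(\C)$. At the present step, however, projectivity of $P$ in $\C$ is enough, and there is essentially nothing more to verify.
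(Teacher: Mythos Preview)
Your proof is correct and matches the paper's own argument exactly: both simply invoke the projectivity of $P$ in $\C$ against the regular epimorphism $f_G$ to produce $g_1$ with $f_G g_1 = s$. Your additional commentary about what comes next is accurate but not needed for this particular lemma.
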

\begin{proof}
	Since $P$ is projective in $\C$ and since $f_G$ is a regular epimorphism in $\C$,
	\[
		\xymatrix{
		T\ar[r]^-{\partial} \ar@{->>}[d]_-{f_T} & G\ar@{->>}[d]^-{f_G} & \\
		Q\ar@{{ |>}->}[r]_-{\partial'} & Q \rtimes_\psi P \ar@<.55ex>[r]^-{p} & P \ar@<.55ex>[l]^-{s}\ar@/_1pc/@{-->}[lu]_-{g_1}
		}
	\]
	there exists a morphism $g_1\colon P \to G$ such that $s=f_G g_1$.
	\qedhere
\end{proof}

\begin{lemma}[A section of $f_T$]\label{lemma - def hT}
	We can find a section $g_T$ of the regular epimorphism $f_T$ such that $g_T$ is equivariant with respect to the action cores $\psi$ and $\phi (g_1\cosmash 1_T)$.
\end{lemma}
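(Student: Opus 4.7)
The plan is to use the projectivity of the split extension \eqref{eq - split ext of P} in $\SSE_P(\C)$. The map $g_1\colon P\to G$ from the previous lemma lets us restrict the $G$-action core $\phi$ to a $P$-action core $\phi\comp(g_1\cosmash 1_T)\colon P\cosmash T\to T$. This $P$-action core comes from an actual split extension $0\to T\to T\rtimes P\to P\to 0$, obtained for example by pulling the split extension $0\to T\to T\rtimes_\phi G\to G\to 0$ back along $g_1$.

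Next, I would verify that $f_T\colon T\to Q$ is equivariant with respect to the two $P$-actions $\phi\comp(g_1\cosmash 1_T)$ and $\psi$. Using that $(f_T,f_G)$ is a morphism of internal crossed modules, the composite
\[
f_T \phi (g_1\cosmash 1_T) \equal \overline{\chi}(f_G\cosmash f_T)(g_1\cosmash 1_T) \equal \overline{\chi}(s\cosmash 1_Q)(1_P\cosmash f_T)
\]
holds because $f_Gg_1=s$. The key identity $\overline{\chi}\comp(s\cosmash 1_Q) = \psi$ then follows from Lemma~\ref{lemma - equivalence clot cosmash} combined with the uniqueness of the action core associated with the split extension \eqref{eq - split ext of P}: precomposing $\partial'\overline{\chi}=\couniv{1_{Q\rtimes_\psi P}}{\partial'}h_{Q\rtimes_\psi P, Q}$ with $s\cosmash 1_Q$ and using the naturality of $h$ gives $\couniv{s}{\partial'}h_{P,Q}=\partial'\psi$, from which the identity follows since $\partial'$ is monic. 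Equivariance of $f_T$ is then immediate.

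Equivariance of $f_T$ means that the pair $(f_T, f_T\rtimes 1_P)$ is a morphism of split extensions over $P$ from $0\to T\to T\rtimes P\to P\to 0$ to $0\to Q\to Q\rtimes_\psi P\to P\to 0$. Since $f_T$ is a regular epimorphism in $\C$, Lemma~\ref{lemma - reg epi in H-group} tells us that this is a regular epimorphism in $\SSE_P(\C)$. By hypothesis, the codomain split extension is projective in $\SSE_P(\C)$, so this regular epimorphism admits a section $(g_T,\tilde g)$ in $\SSE_P(\C)$. The kernel component $g_T\colon Q\to T$ then satisfies $f_Tg_T=1_Q$, and the fact that $(g_T,\tilde g)$ is a morphism of split extensions over $P$ translates exactly into the equivariance $g_T\psi=\phi(g_1\cosmash 1_T)(1_P\cosmash g_T)=\phi(g_1\cosmash g_T)$ required by the statement.

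The main technical point is the identity $\overline{\chi}\comp(s\cosmash 1_Q)=\psi$: everything else is then a formal transfer through the equivalence between split extensions and action cores and a direct appeal to the projectivity assumption on \eqref{eq - split ext of P}.
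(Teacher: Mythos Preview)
Your proof is correct and follows essentially the same approach as the paper's: restrict the $G$-action on $T$ to a $P$-action via $g_1$, and then apply the projectivity hypothesis in $\SSE_P(\C)$ to the resulting regular epimorphism of split extensions over $P$. You are in fact more careful than the paper on one point: you explicitly verify that $f_T$ is equivariant for the two $P$-action cores, which is needed for $(f_T,f_T\rtimes 1_P)$ to be a morphism in $\SSE_P(\C)$, whereas the paper simply draws the diagram. The identity $\overline{\chi}\comp(s\cosmash 1_Q)=\psi$ that you isolate as the ``main technical point'' is exactly what the paper establishes immediately after this lemma (just before Lemma~\ref{lemma - def hG}), by the same argument via Lemma~\ref{lemma - equivalence clot cosmash} and the uniqueness in~\eqref{diag - uniqueness of action for split extension}.
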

\begin{proof}
	We can define an action core of $P$ to $T$ via the morphism
	\[ \phi'=\phi (g_1\cosmash 1_T)\colon P\cosmash T \to T \]
	Such morphism allows us to define the semi-direct product and therefore a split extension of $P$ with $T$ as the kernel.
	Via the morphism $g_1$, we can see $T$ as the kernel of a split extension of $P$ (e.g.\ \cite[Proposition 3.8]{HVdL}). Finally, we have the morphism of split short exact sequences
	\[
		\xymatrix{
		0 \ar[r] & T \ar@{=}[d] \ar@{{ |>}->}[r]^-{l} & T\rtimes_{\phi'} P \ar[d]|-{1_T\rtimes g_1}\ar@{-{ >>}}@<.55ex>[r]^-{g} & P \ar[d]^-{g_1} \ar[r] \ar@{{ >}->}@<.55ex>[l]^-{t} & 0\\
		0 \ar[r] & T \ar@{{ |>}->}[r]^-{k'} & T\rtimes_\phi G \ar@{-{ >>}}@<.55ex>[r]^-{p'} & G \ar[r] \ar@{{ >}->}@<.55ex>[l]^-{s'} & 0
		}
	\]

	Since $Q$ is a projective object in $\SSE_P(\C)$, there exists a morphism $g_T$ in $\C$ such that
	\[
		\xymatrix{
		0\ar[r]&T\ar@{{ |>}->}[r]^-{l}\ar@{->>}[d]^-{f_T} & T \rtimes_{\phi(g_1\cosmash 1_T)} P \ar@<.55ex>[r]^-{g}\ar@{->>}[d]^-{f_T\rtimes 1_P} & P \ar@<.55ex>[l]^-{t} \ar@{=}[d]\ar[r]& 0\\
		0\ar[r]&Q\ar@{{ |>}->}[r]_-{\partial'} \ar@/^1pc/@{-->}[u]^-{g_T} & Q \rtimes_\psi P \ar@<.55ex>[r]^-{p} \ar@/^1pc/@{-->}[u]^-{g_T\rtimes 1_P} & P \ar@<.55ex>[l]^-{s} \ar[r]& 0
		}
	\]
	the pair $(g_T, g_T\rtimes 1_P)$ is a section in $\SSE_P(\C)$ for $(f_T,f_T\rtimes 1_P)$. Moreover, this implies that $g_T$ is a section in $\C$ of $f_T$ (see page~\pageref{eq - def projective split extension of B}) and  $g_T\psi = \phi (g_1\cosmash 1_T)(1_P\cosmash g_T)$.
\end{proof}

Before proceeding with the next lemma, we notice that ${\psi=\overline{\chi} (s\cosmash 1_Q)}$. Indeed, by the uniqueness of the action core defined by a split extension~\eqref{diag - uniqueness of action for split extension}, it suffices to prove that $\partial'\overline{\chi} (s\cosmash 1_Q) =\couniv{s}{\partial'} h_{P,Q}$. This is true by the commutativity of the diagram
\[
	\xymatrix{
	P \cosmash Q\ar[r]^-{s\cosmash 1_Q} \ar[d]_-{h_{P,Q}} & (Q\rtimes_\psi P)\cosmash Q \ar[r]^-{\chi} \ar[d]_-{h_{Q\rtimes_\psi P, Q}} & Q  \ar[d]^-{\partial'}\\
	P+Q \ar[r]^-{s+1_Q} \ar@/_1pc/[rr]_-{\couniv{s}{\partial'}} & (Q\rtimes_\psi P)+Q \ar[r]^-{\couniv{1_{Q\rtimes_\psi P}}{\partial'}} & Q\rtimes_\psi P
	}
\]
where the right-hand square commutes by Lemma~\ref{lemma - equivalence clot cosmash} and the left-hand square commutes by definition of the binary cosmash.

\begin{lemma}[A section of $f_G$]\label{lemma - def hG}
	There exists a morphism $g_G\colon Q\rtimes_\psi P \to G$ which is a section of $f_G$ and such that $\partial g_T=g_G\partial'$.
\end{lemma}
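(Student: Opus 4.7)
The plan is to construct $g_G$ via the universal property of the coequalizer presenting $Q\rtimes_\psi P$. From Section~\ref{subsec - actions and semidirect product}, the canonical morphism $\couniv{s}{\partial'}\colon P+Q \to Q\rtimes_\psi P$ is the coequalizer of the pair $(h_{P,Q}, \iota_2\psi)$. I would take the induced arrow $\couniv{g_1}{\partial g_T}\colon P+Q \to G$ and verify that it coequalizes this pair; its unique factorization through $\couniv{s}{\partial'}$ is then the desired $g_G$.

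The coequalization amounts to the equality $\partial g_T\psi = \couniv{g_1}{\partial g_T}h_{P,Q}$. The key ingredients are the equivariance $g_T\psi = \phi(g_1\cosmash g_T)$ coming from Lemma~\ref{lemma - def hT} (after using functoriality of $\cosmash$), together with the precrossed module condition on $(T,G,\phi,\partial)$ from Definition~\ref{thm - XMod and cosmash}, which reads $\partial\phi = \overline{\chi_G}(1_G\cosmash\partial)$ with $\overline{\chi_G} = \couniv{1_G}{1_G}h_{G,G}$. Chaining these with the functoriality of the binary cosmash product gives
\[
\partial g_T\psi = \overline{\chi_G}(g_1\cosmash \partial g_T) = \couniv{1_G}{1_G}(g_1+\partial g_T)h_{P,Q} = \couniv{g_1}{\partial g_T}h_{P,Q},
\]
as required, so that $g_G\colon Q\rtimes_\psi P \to G$ with $g_G\couniv{s}{\partial'} = \couniv{g_1}{\partial g_T}$ is well defined.

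The two required properties of $g_G$ then follow by post-composing with the regular epimorphism $\couniv{s}{\partial'}$ and cancelling it. The identity $g_G\partial' = \partial g_T$ is immediate by restricting $g_G\couniv{s}{\partial'} = \couniv{g_1}{\partial g_T}$ along the second coproduct injection $\iota_2$. For $f_G g_G = 1_{Q\rtimes_\psi P}$, I would compute
\[
f_G g_G\couniv{s}{\partial'} = \couniv{f_G g_1}{f_G\partial g_T} = \couniv{s}{\partial' f_T g_T} = \couniv{s}{\partial'},
\]
using $f_G g_1 = s$ (by construction of $g_1$), $f_G\partial = \partial' f_T$ (crossed module morphism property), and $f_T g_T = 1_Q$ (Lemma~\ref{lemma - def hT}); cancellation of the epi closes the argument.

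The main obstacle is the coequalizer factorization, and this is precisely where the cosmash-product formulation of internal crossed modules shows its strength: the precrossed module axiom translates essentially verbatim into the compatibility needed to factor $\couniv{g_1}{\partial g_T}$ through $Q\rtimes_\psi P$, reducing what could otherwise be a cumbersome check with $T_A$-algebra actions to a short manipulation in cosmash functoriality.
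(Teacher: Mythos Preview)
Your proposal is correct and follows essentially the same route as the paper: construct $g_G$ by factoring $\couniv{g_1}{\partial g_T}$ through the coequalizer $\couniv{s}{\partial'}$, verify the coequalization via the equivariance of $g_T$ and the precrossed module condition on $(T,G,\phi,\partial)$, and then deduce both $g_G\partial'=\partial g_T$ and $f_Gg_G=1$ by cancelling the regular epimorphism $\couniv{s}{\partial'}$.
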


\begin{proof}
	To define the morphism $g_G$, we will use the construction of $Q\rtimes_\psi P$---its being the coequalizer of the pair $(h_{P,Q}, \iota_2 \psi)$:
	\[
		\xymatrix{
		P\cosmash Q \ar[r]^-{h_{P,Q}} \ar[d]_-{\psi} & P+Q \ar[r]^-{\couniv{g_1}{\partial g_T}} \ar@{->>}[d]|{\couniv{s}{\partial'}} & G\\
		Q\ar[ru]^-{\iota_2}\ar@{{ |>}->}[r]_-{\partial'} & Q \rtimes_\psi P \ar@{-->}[ru]_-(0.6){g_G} \ar@<.55ex>[r]^-{p} & P \ar@<.55ex>[l]^-{s}
		}
	\]
	We can observe that $\couniv{g_1}{\partial g_T}$ coequalizes the pair $(h_{P,Q}, \iota_2 \psi)$. Indeed,
	\begin{align*}
		\couniv{g_1}{\partial g_T}\iota_2 \psi & = \partial g_T \psi
		\overset{(\alpha)}{=} \partial \phi (g_1\cosmash 1_T)(1_P \cosmash g_T)
		\overset{(\beta)}{=} \overline{\chi_G} (1_T\cosmash \partial)(g_1\cosmash g_T) \\
		                                       & = \couniv{1_G}{1_G} (g_1+g_T)h_{P,Q}
		= \couniv{g_1}{g_T}h_{P,Q},
	\end{align*}
	where ($\alpha$) follows from the equivariance of $g_T$ and ($\beta$) holds since $(T,G,\phi, \partial)$ is a (pre)crossed module. As a result, there exists a unique morphism $g_G\colon Q\rtimes_\psi P \to G$ such that
	\begin{equation}
		g_1=g_G \couniv{s}{\partial'} \iota_1 = g_G s \qquad \text{and} \qquad \partial g_T = g_G \couniv{s}{\partial'} \iota_2 = g_G \partial'.\label{eq - def hG}
	\end{equation}

	Finally, $g_G$ is a section of $f_G$: it suffices to prove that $\couniv{s}{\partial'}=f_G g_G \couniv{s}{\partial'}$ since $\couniv{s}{\partial'}$ is a (regular) epimorphism:
	\begin{align*}
		f_G g_G \couniv{s}{\partial'} \overset{(\gamma)}{=} f_G \couniv{g_1}{g_T}
		= \couniv{f_G g_1}{f_G \partial g_T}
		\overset{(\delta)}{=}  \couniv{s}{f_G \partial g_T}
		\overset{(\varepsilon)}{=} \couniv{s}{\partial' f_T g_T}
		= \couniv{s}{\partial'}.
	\end{align*}
	Here ($\gamma$) follows from the definition of $g_G$, ($\delta$) follows from the definition of $g_1$ and ($\varepsilon$) holds since the pair $(f_T,f_G)$ is a morphism in $\XMod(\C)$.
\end{proof}

\begin{lemma}[The pair $(g_T,g_G)$ is a morphism in $\XMod(\C)$]
	The pair of morphisms $(g_T,g_G)$, where $g_T$ and $g_G$ are defined respectively in the proof of Lemma~\ref{lemma - def hT} and the proof of Lemma~\ref{lemma - def hG}, is a morphism in $\XMod(\C)$.
\end{lemma}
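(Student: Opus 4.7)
The first of the two conditions defining a morphism in $\XMod(\C)$---namely $\partial g_T = g_G \partial'$---is already contained in equation~\eqref{eq - def hG} from the proof of Lemma~\ref{lemma - def hG}. The whole content of the statement is therefore the equivariance
\[
g_T \overline{\chi} = \phi(g_G \cosmash g_T)
\]
of morphisms $(Q\rtimes_\psi P) \cosmash Q \to T$, and this is what my plan sets out to verify.

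The strategy is to cover the domain $(Q\rtimes_\psi P)\cosmash Q$ by regular epimorphisms in order to reduce the equation to simpler identities. Since $\couniv{s}{\partial'}\colon P+Q \to Q\rtimes_\psi P$ is a regular epimorphism, Lemma~\ref{lemma - cosmash nice with reg epi} makes $\couniv{s}{\partial'}\cosmash 1_Q$ one as well; Lemma~\ref{lemma - covering of A+B flat C} then provides a further regular-epimorphic cover of $(P+Q)\cosmash Q$ by $(P\cosmash Q\cosmash Q) + (P\cosmash Q) + (Q\cosmash Q)$. It therefore suffices to check the equivariance equation after precomposition with each of the three summands, i.e.\ with $(\couniv{s}{\partial'}\cosmash 1_Q)(\iota_1\cosmash 1_Q)$, $(\couniv{s}{\partial'}\cosmash 1_Q)(\iota_2\cosmash 1_Q)$, and $(\couniv{s}{\partial'}\cosmash 1_Q)\,j_{P,Q,Q}$.

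On the first summand, using the identity $\psi = \overline{\chi}(s\cosmash 1_Q)$ (which follows from the uniqueness of the action core associated to the split extension~\eqref{eq - split ext of P}) together with $g_G s = g_1$, the equation collapses to $g_T\psi = \phi(g_1\cosmash g_T)$, which is the equivariance of $g_T$ established in Lemma~\ref{lemma - def hT}. On the second summand, using $g_G\partial' = \partial g_T$ from~\eqref{eq - def hG} and the Peiffer condition of Definition~\ref{thm - XMod and cosmash} applied to both crossed modules $(Q, Q\rtimes_\psi P, \overline{\chi}, \partial')$ and $(T, G, \phi, \partial)$, the equation reduces to the naturality $g_T\overline{\chi_Q} = \overline{\chi_T}(g_T\cosmash g_T)$ of the conjugation action core $\overline{\chi_{-}} = \couniv{1}{1}h_{-,-}$.

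The main obstacle is the third, ``ternary'' summand. On the left-hand side, Lemma~\ref{lemma - useful lemma} rewrites $(\couniv{s}{\partial'}\cosmash 1_Q)\,j_{P,Q,Q}$ as $S_{2,1}^{Q\rtimes_\psi P,Q}(s\cosmash\partial'\cosmash 1_Q)$; factorising $s\cosmash\partial'\cosmash 1_Q$ through $1_{Q\rtimes_\psi P}\cosmash\partial'\cosmash 1_Q$ and applying the ternary commutator condition for $(Q, Q\rtimes_\psi P, \overline{\chi}, \partial')$ converts the $S_{2,1}$ factor into $S_{1,2}$, after which Lemma~\ref{lemma - second useful lemma} together with $\psi = \overline{\chi}(s\cosmash 1_Q)$ yields $g_T\,\psi\,S_{1,2}^{P,Q}$. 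On the right-hand side, $(g_G\cosmash g_T)(\couniv{s}{\partial'}\cosmash 1_Q) = \couniv{g_1}{\partial g_T}\cosmash g_T$ by \eqref{eq - def hG}, so Lemma~\ref{lemma - useful lemma} rewrites the composite as $\phi\,S_{2,1}^{G,T}(g_1\cosmash\partial g_T\cosmash g_T)$; factorising through $1_G\cosmash\partial\cosmash 1_T$ and applying the ternary commutator condition for $(T, G, \phi, \partial)$, followed by Lemma~\ref{lemma - second useful lemma} and the equivariance of $g_T$, brings this to the same value $g_T\,\psi\,S_{1,2}^{P,Q}$. The delicate part is orchestrating these applications of the folding morphisms and ternary cosmash identifications in a consistent order so that both sides arrive at this common expression.
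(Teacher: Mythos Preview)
Your proposal is correct and follows essentially the same approach as the paper: reduce the equivariance via the regular epimorphism $(\couniv{s}{\partial'}\cosmash 1_Q)\counivtr{j_{P,Q,Q}}{\iota_1\cosmash 1_Q}{\iota_2\cosmash 1_Q}$ and verify it on each of the three summands using, respectively, the ternary commutator condition for both crossed modules together with Lemmas~\ref{lemma - useful lemma} and~\ref{lemma - second useful lemma}, the equivariance of $g_T$ with $\psi=\overline{\chi}(s\cosmash 1_Q)$, and the Peiffer condition for both crossed modules. The only cosmetic difference is that on the ternary summand the paper makes both sides meet at $\phi_{1,2}^{G,T}(g_1\cosmash g_T\cosmash g_T)$ whereas you make them meet at $g_T\,\psi\,S_{1,2}^{P,Q}$, but this is the same chain of equalities read from a different midpoint.
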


\begin{proof}
	By the definition of $g_G$, we already know that $\partial g_T=g_G \partial'$. Hence, it remains to prove the equivariance condition, the equality $g_T \overline{\chi} = \phi (g_G\cosmash g_T)$. To do so, according to Lemma~\ref{lemma - cosmash nice with reg epi} and Lemma~\ref{lemma - covering of A+B flat C}, it suffices to prove the equality
	\begin{multline}
		g_T \overline{\chi} \left(\couniv{s}{\partial'} \cosmash 1_Q\right)
		\counivtr{j_{P,Q,Q}}{\iota_1\cosmash 1_Q}{\iota_2 \cosmash 1_Q}\\
		= \phi (g_G\cosmash g_T) \left(\couniv{s}{\partial'}\cosmash 1_Q\right)
		\counivtr{j_{P,Q,Q}}{\iota_1\cosmash 1_Q}{\iota_2 \cosmash 1_Q}.\label{eq - equivariance of hT hG}
	\end{multline}
	Since the family $\{\iota_1,\iota_2,\iota_3\}$
	is jointly epic as the injections of a coproduct, the equality~\eqref{eq - equivariance of hT hG} holds if and only if, for $i=1,2,3$, the equations
	\begin{multline*}
		g_T \overline{\chi} \left(\couniv{s}{\partial'} \cosmash 1_Q\right)
		\counivtr{j_{P,Q,Q}}{\iota_1\cosmash 1_Q}{\iota_2 \cosmash 1_Q}\iota_i\\
		= \phi (g_G\cosmash g_T) \left(\couniv{s}{\partial'}\cosmash 1_Q\right)
		\counivtr{j_{P,Q,Q}}{\iota_1\cosmash 1_Q}{\iota_2 \cosmash 1_Q}\iota_i
	\end{multline*}
	hold.

	For the first inclusion $\iota_1$, we have the diagram
	\begin{equation}
		\vcenter{\xymatrix{
		P\cosmash Q \cosmash Q\ar[r]^-{j_{P,Q,Q}} & (P+Q)\cosmash Q \ar@{->>}[r]^-{\couniv{s}{\partial'}\cosmash 1_Q}& (Q\rtimes_\psi P)\cosmash Q\ar[r]^-{\overline{\chi}} \ar[d]_-{g_G\cosmash g_T}& Q\ar[d]^-{g_T}\\
		&& G\cosmash\ar[r]_-{\phi} T & T
		}}\label{diag - equivariance ternary cosmash}
	\end{equation}
	We can decompose the bottom part of~\eqref{diag - equivariance ternary cosmash} as
	\[
		\xymatrix@C=3pc @R=3pc{
		P\cosmash Q \cosmash Q\ar[r]^-{j_{P,Q,Q}}\ar[d]^-{g_1\cosmash \partial g_T \cosmash g_T} \ar@/_2pc/[dd]_-{g_1\cosmash g_T\cosmash g_T} & (P+Q)\cosmash Q \ar@{->>}[r]^-{\couniv{s}{\partial'}\cosmash 1_Q} \ar[dr]|{\couniv{g_1}{\partial g_T}\cosmash g_T} & (Q\rtimes_\psi P)\cosmash Q\ar[d]^-{g_G\cosmash g_T}\\
		G\cosmash G \cosmash T \ar[rr]^-{S_{2,1}^{G,T}} \ar[drr]|{\phi_{2,1}^{G,T}} & & G\cosmash T\ar[d]^-{\phi}\\
		G\cosmash T \cosmash T\ar[u]_-{1_G\cosmash \partial \cosmash 1_T}\ar[rr]_-{\phi_{1,2}^{G,T}} && T
		}
	\]
	which is commutative since
	\begin{align*}
		\phi (g_G\cosmash g_T)\left(\couniv{s}{\partial'}\cosmash 1_Q\right) j_{P,Q,Q} & \overset{(\alpha)}{=} \phi \left( \couniv{g_1}{\partial g_T})\cosmash g_T\right) j_{P,Q,Q} \\
		                                                                               & \overset{(\beta)}{=} \phi S_{2,1}^{G,T} (g_1 \cosmash \partial g_T \cosmash g_T)           \\
		                                                                               & = \phi_{2,1}^{G,T}(1_G \cosmash \partial \cosmash 1_T) (g_1\cosmash g_T \cosmash g_T)      \\
		                                                                               & = \phi_{1,2}^{G,T} (g_1\cosmash g_T \cosmash g_T),
	\end{align*}
	where ($\alpha$) follows from the definition of $g_G$ and ($\beta$) follows from Lemma~\ref{lemma - useful lemma}.

	We can decompose the top part of the diagram~\eqref{diag - equivariance ternary cosmash} as
	\[
		\resizebox{.95\textwidth}{!}{
		\xymatrix@C=3.5pc @R=3pc{
		& P\cosmash Q \cosmash Q \ar@/_4pc/[dd]_-{S_{1,2}^{P,Q}} \ar@/_4pc/[dddl]_-{g_1\cosmash g_T\cosmash g_T}\ar[r]^-{j_{P,Q,Q}} \ar[d]|{s\cosmash 1_Q \cosmash 1_Q} \ar[rd]|-{s\cosmash \partial' \cosmash 1_Q}& (P+Q)\cosmash Q \ar[r]^-{\couniv{s}{\partial'}\cosmash 1_Q} & (Q\rtimes_\psi P)\cosmash Q\ar[d]^-{\overline{\chi}} \\
		&(Q\rtimes_\psi P) \cosmash Q \cosmash Q 	\ar[r]^-{1_{(Q\rtimes_\psi P)}\cosmash \partial' \cosmash 1_Q} \ar@/_2pc/[rr]_-{\overline{\chi}_{1,2}} \ar[dr]_-{S_{1,2}^{(Q\rtimes_\psi P), Q}} & (Q\rtimes_\psi P)\cosmash (Q\rtimes_\psi P) \cosmash Q\ar[ru]|-{S_{2,1}^{(Q\rtimes_\psi P), Q}} \ar[r]^-{\overline{\chi}_{2,1}} & Q \ar[r]^-{g_T} & T \\
		&P \cosmash Q \ar[r]|{s\cosmash 1_Q} \ar[rrd]_-{g_1\cosmash g_T} & (Q\rtimes_\psi P) \cosmash Q \ar[ru]_-{\overline{\chi}} & & \\
		G \cosmash T \cosmash T\ar[rrr]_-{S_{1,2}^{G,T}}&&& G\cosmash T \ar[ruu]_-{\phi} &
		}
		}
	\]
	which is commutative since for the clockwise composition we have
	\begin{align*}
		\overline{\chi} \left(\couniv{s}{\partial'}\cosmash 1_Q \right) j_{P,Q,Q} & \overset{(\alpha)}{=} \overline{\chi} S_{2,1}^{Q\rtimes_\psi P, Q}(s\cosmash \partial' \cosmash 1_Q)                          \\
		                                                                          & = \overline{\chi}_{2,1}^{Q\rtimes_\psi P, Q} (1_{Q\rtimes_\psi P}\cosmash \partial' \cosmash 1_Q)(s\cosmash 1_Q \cosmash 1_Q) \\
		                                                                          & \overset{(\beta)}{=} \overline{\chi}_{1,2}^{Q\rtimes_\psi P, Q}(s\cosmash 1_Q\cosmash 1_Q)
		= \overline{\chi}  S_{1,2}^{Q\rtimes_\psi P, Q} (s\cosmash 1_Q \cosmash 1_Q)                                                                                                                              \\
		                                                                          & \overset{(\gamma)}{=}\overline{\chi} (s\cosmash 1_Q)S_{1,2}^{P,Q}
		= \overline{\chi} (s\cosmash 1_Q)  S_{1,2}^{P,Q}
		\overset{(\delta)}{=} \psi S_{1,2}^{P,Q}
	\end{align*}
	where ($\alpha$) follows from Lemma~\ref{lemma - useful lemma}, ($\beta$) follows from the condition ($3$) of the Definition~\ref{thm - XMod and cosmash}, ($\gamma$) follows from Lemma~\ref{lemma - second useful lemma} and ($\delta$) follows from the proof of Lemma~\ref{lemma - def hT}. On the other hand, for the counterclockwise composition, we have
	\begin{align*}
		g_T \psi S_{1,2}^{P,Q} \overset{(\alpha')}{=} \phi  (g_1\cosmash g_T) S_{1,2}^{P,Q}
		\overset{(\beta')}{=} \phi  S_{1,2}^{G,T} (g_1\cosmash g_T \cosmash g_T)
		= \phi_{1,2}^{G,T}(g_1\cosmash g_T \cosmash g_T)
	\end{align*}
	where ($\alpha'$) follows from the equivariance of $g_T$ and ($\beta'$) follows from Lemma~\ref{lemma - second useful lemma}.

	For the second inclusion $\iota_2$, we have the diagram
	\[
		\xymatrix{
		&(P+Q)\cosmash Q\ar@{->>}[d] ^-{\couniv{s}{\partial'}\cosmash 1_Q} & \\
		P\cosmash Q\ar[ru]^-{\iota_1\cosmash 1_Q} \ar[r]_-{s\cosmash 1_Q} \ar[d]_-{1_P\cosmash g_T} & (Q\rtimes_\psi P)\cosmash Q\ar[r]^-{\overline{\chi}} \ar[d]_-{g_G\cosmash g_T}& Q\ar[d]^-{g_T}\\
		P\cosmash T\ar[r]_-{g_1\cosmash 1_T}& G\cosmash\ar[r]_-{\phi} T & T
		}
	\]
	which commutes since
	\begin{align*}
		\phi(g_G\cosmash g_T)(\couniv{s}{\partial'}\cosmash 1_Q)(\iota_1\cosmash 1_Q) & =\phi (g_G\cosmash g_T)(s\cosmash 1_Q)
		\overset{(\tilde{\alpha})}{=} \phi (g_1\cosmash g_T)                                                                                                           \\
		                                                                              & \overset{(\tilde{\beta})}{=} g_T \psi
		\overset{(\tilde{\gamma})}{=} g_T \overline{\chi} (s\cosmash 1_Q)                                                                                              \\
		                                                                              & = g_T \overline{\chi} (\couniv{s}{\partial'}\cosmash 1_Q)(\iota_1\cosmash 1_Q)
	\end{align*}
	where ($\tilde{\alpha}$) follows from the definition of $g_G$, ($\tilde{\beta}$) follows from the equivariance of $g_T$ and ($\tilde{\gamma}$) follows from the proof of Lemma~\ref{lemma - def hT}.

	For the third inclusion $\iota_3$, we have
	\[
		\xymatrix{
		Q\cosmash Q \ar@/_1pc/[drr]_-{\partial g_T \cosmash g_T} \ar[r]^-{\iota_2\cosmash 1_Q}& (P+Q)\cosmash Q \ar@{->>}[r]^-{\couniv{s}{\partial'}\cosmash 1_Q} \ar[dr]|{\couniv{g_1}{\partial g_T}\cosmash g_T} &(Q\rtimes_\psi P)\cosmash Q\ar[r]^-{\overline{\chi}} \ar[d]^-{g_G\cosmash g_T}& Q\ar[d]^-{g_T}\\
		&& G\cosmash\ar[r]_-{\phi} T & T
		}
	\]
	where the outer diagram commutes. Indeed, we have
	\begin{align*}
		\phi (\partial g_T\cosmash g_T) & =\phi (\partial\cosmash 1_T)(g_T\cosmash g_T)
		\overset{(\alpha)}{=} \overline{\chi_T} (g_T\cosmash g_T)
		=g_T\overline{\chi_Q}                                                                              \\
		                                & \overset{(\beta)}{=} g_T \overline{\chi} (\partial'\cosmash 1_Q)
		= g_T\overline{\chi} \left(\couniv{s}{\partial'}\cosmash 1_Q\right)(\iota_2\cosmash 1_Q)
	\end{align*}
	where ($\alpha$) follows from the Peiffer condition for the crossed module $(T,G,\phi,\partial)$ and ($\beta$) follows from the Peiffer condition for the crossed module $(Q,Q\rtimes_\psi P,\overline{\chi},\partial')$.
\end{proof}

\subsection{Free internal crossed modules in a variety}\label{sec - free object}
In any variety of algebras, such as the category of groups, we have an explicit description of the \defn{free objects} which are the image of any set via the right adjoint of the forgetful functor ${U\colon \V\to \Set}$ (see for instance~\cite{Adamek-Rosicky-Vitale}). Furthermore, in this context, any free object is projective (e.g.~\cite{Hilton-Stammbach}). The purpose of this section is to describe explicitly the free objects in $\XMod(\V)$. Such objects always exist since it is well known that $\XMod(\V)$ is equivalent to $\Cat(\V)$, the category of internal categories in a semi-abelian variety of algebras $\V$. However, free internal categories exist as soon as $\V$ is Mal'tsev variety (and therefore they also do in any semi-abelian variety), since then $\Cat(\V)$ is also a variety of algebras~\cite{Gran-Rosicky}. This description will have one main consequence: it will help us show that the variety $\XMod(\V)$ is not a Schreier variety.

For $\XMod(\Gp)$, in \cite{Carrasco-Homology}, the authors divide the required adjunction into a composite of two consecutive adjunctions:
\begin{equation}\label{diag - adjunction}
	\vcenter{\xymatrix{
	\XMod(\Gp) \ar@<-1.1ex>[r]_-{R}^-{\perp}& \Gp \ar@<-1.1ex>[r]_-{V}^-{\perp} \ar@<-1.1ex>[l]_-{L} & \Set \ar@<-1.1ex>[l]_-{F_r}
	}}
\end{equation}
Here the functor $F_r$ is the free functor for $\Gp$, the functor $R$ sends a crossed module $(T,G, \psi, \partial)$ to the group $T\times G$, and ${L(H)\coloneq(H\flat H,H+H, \overline{\chi}, \kappa_{H,H})}$.

We assert that the construction is identical for any semi-abelian variety $\V$. Since $\V$ is a variety, we may focus on the left-hand adjunction of~\eqref{diag - adjunction}.

\begin{proposition}\label{prop - left adjoint to R}
	Let $\V$ be a semi-abelian variety. The functor
	\[L\colon H\mapsto (H\flat H,H+H, \overline{\chi}, \kappa_{H,H})\]
	is left adjoint to the functor $R\colon\XMod(\V)\to \V$ sending a crossed module $(T,G, \psi, \partial)$ to the product $T\times G$.
\end{proposition}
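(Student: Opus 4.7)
The plan is to establish an explicit natural bijection
\[
\Phi\colon \Hom_{\XMod(\V)}\bigl(L(H),\,(T,G,\psi,\partial)\bigr) \longrightarrow \Hom_\V(H,\,T\times G),
\]
defined by $(f_T, f_G)\mapsto (f_T\eta_{H,H},\, f_G\iota_1)$, where $\eta_{H,H}\colon H\to H\flat H$ is the unit of the monad $T_H$ satisfying $\kappa_{H,H}\eta_{H,H}=\iota_2$. Uniqueness of the preimage of a given pair $(h_T,h_G)$ is immediate: the boundary-compatibility axiom forces $f_G=\couniv{h_G}{\partial h_T}$, after which $f_T$ is determined by the equivariance axiom.

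For the construction of the inverse $\Psi(h_T, h_G)\defeq(f_T, f_G)$, I take $f_T\colon H\flat H\to T$ to be the kernel restriction of
\[
\tilde h\defeq \couniv{sh_G}{kh_T}\colon H+H\to T\rtimes_\psi G,
\]
where $s$ and $k$ are respectively the section and the kernel of the split extension associated to $(T,G,\psi,\partial)$; the factorization $kf_T=\tilde h\kappa_{H,H}$ exists because $p\tilde h\kappa_{H,H}=\couniv{h_G}{0}\kappa_{H,H}=0$ (using $\couniv{1_H}{0}\kappa_{H,H}=0$). The boundary compatibility $\partial f_T=f_G\kappa_{H,H}$ then follows by introducing the unique morphism $\partial'\colon T\rtimes_\psi G\to G$ obtained, via the precrossed module condition for $(T,G,\psi,\partial)$, from $\couniv{1_G}{\partial}\colon G+T\to G$ by the universal property of the regular epimorphism $\couniv{s}{k}\colon G+T\twoheadrightarrow T\rtimes_\psi G$: one has $\partial's=1_G$ and $\partial'k=\partial$, whence $\partial f_T=\partial'kf_T=\partial'\tilde h\kappa_{H,H}=\couniv{h_G}{\partial h_T}\kappa_{H,H}=f_G\kappa_{H,H}$.

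The principal obstacle is the equivariance $f_T\overline{\chi}=\psi(f_G\cosmash f_T)$. My approach is to post-compose both sides with the monomorphism $k$ and apply Lemma~\ref{lemma - equivalence clot cosmash} on each side. On the left, $\kappa_{H,H}\overline{\chi}=\couniv{1_{H+H}}{\kappa_{H,H}}h_{H+H,H\flat H}$ gives $kf_T\overline{\chi}=\couniv{\tilde h}{kf_T}h_{H+H,H\flat H}$. On the right, combining the identity $\psi=\overline{\chi'}(s\cosmash 1_T)$ (where $\overline{\chi'}$ is the conjugation action core of $T\rtimes_\psi G$ on $T$, as established in the proof of Lemma~\ref{lemma - def hG}) with Lemma~\ref{lemma - equivalence clot cosmash} applied to $k$ yields $k\psi=\couniv{s}{k}h_{G,T}$, hence $k\psi(f_G\cosmash f_T)=\couniv{sf_G}{kf_T}h_{H+H,H\flat H}$. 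The agreement of these two morphisms is then verified by applying the regular-epimorphic covering of Lemma~\ref{lemma - covering of A+B flat C} and case-splitting on the three summands: the $\iota_1\cosmash 1_{H\flat H}$ component is immediate from $\tilde h\iota_1=sh_G=sf_G\iota_1$; the $\iota_2\cosmash 1_{H\flat H}$ component, using $\tilde h\iota_2=kh_T$ and $sf_G\iota_2=s\partial h_T$, reduces both sides to $k\overline{\chi_T}(h_T\cosmash f_T)$ via the Peiffer condition $\psi(\partial\cosmash 1_T)=\overline{\chi_T}$ in $(T,G,\psi,\partial)$; and the ternary component is handled analogously using Lemmas~\ref{lemma - useful lemma} and~\ref{lemma - second useful lemma}, together with the ternary commutator condition.

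Finally, $\Phi\Psi=\mathrm{id}$ holds because $f_G\iota_1=h_G$ by construction and $kf_T\eta_{H,H}=\tilde h\kappa_{H,H}\eta_{H,H}=\tilde h\iota_2=kh_T$ gives $f_T\eta_{H,H}=h_T$ (as $k$ is monic); the identity $\Psi\Phi=\mathrm{id}$ reduces to the uniqueness noted in the first paragraph, and naturality in both variables is a direct diagram chase.
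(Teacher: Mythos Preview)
Your construction coincides with the paper's: the same inverse $(h_T,h_G)\mapsto(f_T,f_G)$ with $f_G=\couniv{h_G}{\partial h_T}$ and $f_T$ the kernel restriction of $\couniv{sh_G}{kh_T}=\couniv{s}{k}(h_G+h_T)$, and the same use of the precrossed-module morphism $e\colon T\rtimes_\psi G\to G$ (your $\partial'$) for boundary compatibility. The one divergence is in the equivariance check. The paper runs a single chain of equalities through the $\flat$-monad structure and the identity $k\psi=\couniv{s}{k}h_{G,T}$, never invoking the covering lemma; you instead reduce to $\couniv{\tilde h}{kf_T}h_{H+H,H\flat H}=\couniv{sf_G}{kf_T}h_{H+H,H\flat H}$ and case-split along Lemma~\ref{lemma - covering of A+B flat C}, mirroring the equivariance verification inside Theorem~\ref{thm - projective in XMod(C)}. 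Your route is sound, but be aware that the ternary summand is not quite ``analogous'' to the binary ones: it requires the ternary commutator condition for $(T,G,\psi,\partial)$ together with the obvious generalization of Lemma~\ref{lemma - useful lemma} to $j_{A,B,C}$ with $C\neq B$, and the resulting computation is of the same order as your $\iota_2$ case rather than a one-liner. Likewise, your uniqueness claim that ``$f_T$ is determined by the equivariance axiom'' deserves one more line: since $k$ is monic it suffices to pin down $kf_T$, and the morphism-of-split-extensions $f_T\rtimes f_G$ forces $kf_T=\tilde h\,\kappa_{H,H}$.
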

\begin{proof}
	Let $H$ be an object of $\V$, $(T,G,\psi, \partial)$ an internal crossed module of $\V$, and let $f\colon H \to T$ and $g\colon H \to G$ be two morphisms of $\V$. We will construct two morphisms $f_T\colon H\flat H \to T$ and $f_G\colon H+H \to G$ of $\V$ such that, in $\V$, $g=f_G\iota_1$ and $f=f_T\eta_H$ (where $\eta_H$ the component of the unit for the monad $T_H$ (see p~\pageref{eq-unit adjunction monad action})), and such that the pair $(f_T,f_G)$ is a morphism in $\XMod(\V)$.
	\[
		\xymatrix{
		&H\flat H\ar@{{ |>}->} [r]^-{\kappa_{H,H}} \ar@{-->}[dd]^-{f_T}& H+H \ar@<.55ex>[r]^-{\couniv{1_H}{0}} \ar[dd]^-{\couniv{s}{k}(g+f)}& H \ar@<.55ex>[l]^-{\iota_1} \ar[dd]^-{g} \\
		H\ar[rru]_-(0.8){\iota_2}\ar@{-->}[ru]^-{\eta_H} \ar[dr]_-{f} & &&\\
		& T\ar@{{ |>}->} [r]^-{k} \ar[dr]_-{\iota_2} & T\rtimes_\psi G \ar@<.55ex>[r]^-{p} & G  \ar@<.55ex>[l]^-{s}\ar[dl]^-{\iota_1}\\
		& & G+T\ar@{->>}[u]|(0.35){\couniv{s}{k}} &
		}
	\]

	We may notice that $g\couniv{1_H}{0}=p\couniv{s}{k}(g+f)$ and therefore there exists a unique $f_T\colon H\flat H\to T$ such that the above left-hand square commutes. In addition, we have
	\begin{equation}\label{eq - free object XMod}
		sg=\couniv{s}{k}\iota_1 g =\couniv{s}{k}(g+f)\iota_1 \text{ and } kf = \couniv{s}{k}\iota_2 f = \couniv{s}{k}(g+f)\iota_2.
	\end{equation}

	Moreover, we can set $f_G\coloneq \couniv{g}{\partial f} \colon H+H\to G$ which implies immediately that $g=f_G \iota_1$. In addition, the equality $f=f_T\eta_H$ holds since
	\[
		k f_T \eta_H=\couniv{s}{k}(g+f)\kappa_{H,H} \eta_H=\couniv{s}{k}(g+f)\iota_2=\couniv{s}{k}\iota_2 f = k f.
	\]

	We also claim that the pair $(f_T,f_G)$ is a morphism in $\XMod(\C)$. Indeed, since $(T,G,\psi, \partial)$ is an internal (pre)crossed module, there exists a unique morphism $e\colon T\rtimes_\psi G\to G$ such that $1_G=es$ and $\partial = e k$~\cite[Proposition 5.4]{HVdL}. This observation leads to the equality $\partial f_T=f_G \kappa_{H,H}$ since
	\[
		\partial f_T = ek f_T \overset{\eqref{eq - free object XMod}}{=} e \couniv{s}{k} (g+f) \kappa_{H,H} = \couniv{1_G}{\partial } (g+f)\kappa_{H,H} = \couniv{g}{\partial f}\kappa_{H,H}= f_G \kappa_{H,H}.
	\]
	\[
		\xymatrix{
		H\flat H \ar[r]^-{\kappa_{H,H}} \ar[d]_-{f_T}& H+H\ar[d]|{ \couniv{s}{k}(g+f)} \ar@/^1pc/[rd]^-{f_G} & \\
		T\ar[r]_-{k} \ar@/_2pc/[rr]_-{\partial}& T\rtimes_\psi \ar@<1.05 ex>[r]|-{p} \ar@<-1.05ex>[r]|-{e} G & G \ar[l]|-{s}
		}
	\]

	To conclude, the equivariance of the pair $(f_T,f_G)$ remains to be shown: this is the equality $f_T \overline{\chi}=\psi (f_G\cosmash f_T)$. Indeed, we have
	\begin{align*}
		k f_T \overline{\chi} & \overset{(\alpha)}{=} \couniv{s}{k}(g+f) \kappa_{H,H} \chi i_{(H+H),H\flat H}                             \\
		                      & \overset{(\beta)}{=} \couniv{s}{k}(g+f)\couniv{1_{H+H}}{\kappa_{H,H}} \kappa_{(H+H),H\flat H} i_{(H+H),H\flat H} \\
		                      & \overset{(\delta)}{=} \couniv{s}{k}(f_G+f_T) \kappa_{(H+H),H\flat H} i_{(H+H),H\flat H}
		= \couniv{s}{k}\kappa_{G,T} (f_G\flat f_T) i_{(H+H),H\flat H}                                                                     \\
		                      & = \couniv{s}{k}\kappa_{G,T} i_{G,T}(f_G\cosmash f_T) = \couniv{s}{k}h_{G,T}(f_G\cosmash f_T)
		\overset{(\gamma)}{=} k \psi (f_G\cosmash f_T)
	\end{align*}
	where:
	\begin{itemize}
		\item ($\alpha$) by definition of $f_T$ and of $\overline{\chi}$,
		\item ($\beta$) by definition of $\chi$ (cf Example~\ref{ex - kernel and XMod}) and by Lemma~\ref{lemma - equivalence clot cosmash};
		\item ($\gamma$) by the construction of the internal semi-direct product (see page~\pageref{diag - uniqueness of action for split extension});
		\item ($\delta$) follows from the equations~\eqref{eq - free object XMod} since we have
		      \[\couniv{s}{k}(f_G+f_T)= \couniv{s}{k}(g+f)\couniv{1_{H+H}}{\kappa_{H,H}}.\]
		      On the one hand,
		      \begin{align*}
			      \couniv{s}{k}(g+f)\iota_1 & = sg = sf_G \iota_1                          \\
			      \couniv{s}{k}(g+f)\iota_2 & = kf = se k f = s \partial f = s f_G \iota_2
		      \end{align*}
		      which implies that
		      \begin{align*}
			      \couniv{s}{k}(f_G+f_T)\iota_1 & = \couniv{s}{k}\iota_1 f_G= s f_G = \couniv{s}{k}(g+f) \\ &= \couniv{s}{k}(g+f) (1_{H+H}\ \kappa_{H,H}) \iota_1,
		      \end{align*}
		      and on the other hand, we have
		      \begin{align*}
			      \couniv{s}{k}(f_G+f_T)\iota_2 & = \couniv{s}{k}\iota_2 f_T= kf_T \\ &= \couniv{s}{k}(g+f)\kappa_{H,H}= \couniv{s}{k}(g+f)(1_{H+H}\ \kappa_{H,H})\iota_2. \qedhere
		      \end{align*}
	\end{itemize}
\end{proof}

The previous result gives an explicit description of the free objects in $\XMod(\V)$:

\begin{corollary}\label{cor - free object XMod(V)}
	Let $\V$ be a semi-abelian variety. In the notation of \eqref{diag - adjunction}, the composite $L F_r$ is left adjoint to the functor $VR$. Here $F_r$ is the free functor for the forgetful functor $V\colon \V \to \Set$.\noproof
\end{corollary}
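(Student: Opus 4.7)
The plan is to argue by composition of adjunctions. The statement asserts that the composite $LF_{r}\colon \Set \to \XMod(\V)$ is left adjoint to the composite $VR\colon \XMod(\V)\to \Set$, and this is an entirely formal consequence of the two adjunctions appearing in diagram~\eqref{diag - adjunction}.

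First I would invoke the standard fact that, for any variety of algebras, the forgetful functor $V\colon \V \to \Set$ admits a left adjoint $F_{r}\colon \Set \to \V$ sending a set to the corresponding free algebra; this is just the classical free/forgetful adjunction (see for instance~\cite{Adamek-Rosicky-Vitale}). Next I would recall that Proposition~\ref{prop - left adjoint to R} provides the adjunction $L\dashv R$ between $\V$ and $\XMod(\V)$, whose unit and counit have been made explicit there.

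Finally, I would apply the general fact that adjunctions compose: given $F_{r}\dashv V$ and $L\dashv R$, one has $LF_{r}\dashv VR$, with unit and counit obtained by pasting those of the two adjunctions. Since $VR\colon \XMod(\V)\to \Set$ is exactly the underlying-set functor sending $(T,G,\psi,\partial)$ to the underlying set of $T\times G$, the composite $LF_{r}$ is by definition the free internal crossed module functor, and for any set $S$ and any internal crossed module $(T,G,\psi,\partial)$ we obtain the natural bijection
\[
\Hom_{\XMod(\V)}\bigl(LF_{r}(S),(T,G,\psi,\partial)\bigr)\cong \Hom_{\Set}\bigl(S, VR(T,G,\psi,\partial)\bigr).
\]

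There is no genuine obstacle here: the whole content of the corollary is already packaged in Proposition~\ref{prop - left adjoint to R}, together with the well-known free/forgetful adjunction for the variety $\V$. The only thing to note explicitly, if desired, is that the resulting free internal crossed module on a set $S$ has the explicit form $\bigl(F_{r}(S)\flat F_{r}(S),\,F_{r}(S)+F_{r}(S),\,\overline{\chi},\,\kappa_{F_{r}(S),F_{r}(S)}\bigr)$, which is exactly the description obtained by applying $L$ to the free algebra $F_{r}(S)$.
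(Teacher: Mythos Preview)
Your proposal is correct and matches the paper's approach: the corollary is stated with no proof (marked \noproof), precisely because it follows immediately from Proposition~\ref{prop - left adjoint to R} and the free/forgetful adjunction for $\V$ by composition of adjunctions, exactly as you argue.
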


All free internal crossed modules are thus of the form
\[(F_r(X)\flat F_r(X),F_r(X)+F_r(X), \overline{\chi}, \kappa_{F_r(X),F_r(X)})\]
for some $X\in \Set$. We regain the well-known characterization in the case of groups.

A consequence of this observation is, that the projective object constructed in Theorem~\ref{thm - projective in XMod(C)} is not free.

\begin{corollary}\label{cor - XMod(V) not Schreier}
	For any semi-abelian non-trivial variety $\V$, the variety $\XMod(\V)$ is not a Schreier variety.
\end{corollary}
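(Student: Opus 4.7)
The plan is to produce a single projective internal crossed module in $\XMod(\V)$ that fails to be free. Because the classes of projectives and free objects coincide in any Schreier variety (as recalled at the start of Section~\ref{sec - projective and free}), exhibiting such an example immediately proves that $\XMod(\V)$ is not Schreier. Both of the needed ingredients are already in place: Theorem~\ref{thm - projective in XMod(C)} manufactures projectives out of projective split extensions, while Corollary~\ref{cor - free object XMod(V)} pins down the shape of a free crossed module.

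I would specialise Theorem~\ref{thm - projective in XMod(C)} to the degenerate case $P=0$, with $Q$ any non-zero projective object of $\V$; the free object on a singleton, $Q\coloneq F_r(\{*\})$, is a convenient choice and is non-zero precisely because $\V$ is non-trivial. Setting $B=0$ in Lemma~\ref{lemma - example projective in SSEP} gives $0\flat Q=Q$ and $0+Q=Q$, so that the trivial split extension $0\to Q\xrightarrow{1_Q}Q\to 0\to 0$ is a projective object of $\SSE_{0}(\V)$. Theorem~\ref{thm - projective in XMod(C)} then yields a projective internal crossed module of the form $(Q,Q,0,1_Q)$: its underlying semi-direct product reduces to $Q$ and, crucially, its boundary is the identity, hence an isomorphism.

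To finish, I would appeal to Corollary~\ref{cor - free object XMod(V)}: the boundary of any free crossed module has the shape $\kappa_{F_r(X),F_r(X)}\colon F_r(X)\flat F_r(X)\to F_r(X)+F_r(X)$, namely the kernel of $\couniv{1_{F_r(X)}}{0}$. Since an isomorphism of crossed modules identifies boundaries up to isomorphisms on either side, if $(Q,Q,0,1_Q)$ were free then $\kappa_{F_r(X),F_r(X)}$ would be an isomorphism; this forces $\couniv{1_{F_r(X)}}{0}=0$, hence $F_r(X)=0$, and in a non-trivial variety this only happens for $X=\emptyset$, leaving the zero crossed module. This contradicts $Q\neq 0$, so the projective $(Q,Q,0,1_Q)$ is not free and $\XMod(\V)$ cannot be Schreier.

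The main conceptual hurdle has already been cleared by the earlier sections, so the only remaining (minor) point to verify is that $\kappa_{H,H}$ is an isomorphism if and only if $H=0$, together with the observation that a non-trivial semi-abelian variety admits non-zero free objects on non-empty sets. Both are immediate from the construction of $B\flat X$ as the kernel of $\couniv{1_B}{0}$ and from the universal property of $F_r$.
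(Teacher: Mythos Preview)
Your argument is correct and arguably cleaner than the paper's own proof. Both proofs combine Theorem~\ref{thm - projective in XMod(C)} with the description of free objects in Corollary~\ref{cor - free object XMod(V)}, but the paper instantiates the theorem with $P$ the free object on one generator and $X$ the free object on two generators, producing the projective crossed module with boundary $\kappa_{P,X}\colon P\flat X\to P+X$, and then simply asserts (invoking Corollary~\ref{cor - free object XMod(V)}) that this cannot be free. Your choice of the degenerate case $P=0$ collapses the construction to a crossed module on $Q$ whose boundary is the identity; non-freeness then reduces to the elementary observation that $\kappa_{H,H}$ is an isomorphism only when $H=0$. This makes the ``not free'' step fully explicit and avoids any implicit appeal to rank invariance of free objects in $\V$, which the paper's choice of $P+X\cong F_r(3)$ versus $F_r(Y)+F_r(Y)$ would otherwise require.

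One small inaccuracy: the action core of your projective is not $0$ but the conjugation core $\overline{\chi_Q}=\couniv{1_Q}{1_Q}h_{Q,Q}$, as prescribed by Example~\ref{ex - kernel and XMod}. The $0$ you may be thinking of is the action of $P=0$ on $Q$, whereas the crossed module structure records the action of $Q\rtimes_\psi 0\cong Q$ on $Q$. This is harmless for your argument, which only compares boundary morphisms.
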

\begin{proof}
	Consider in $\V$ the free object $P$ on one generator and the free object $X$ on two generators. By Lemma~\ref{lemma - example projective in SSEP}, we know that
	\[
		\xymatrix{
		0\ar[r] & P\flat X \ar@{{ |>}->}[r]^-{\kappa_{P,X}} & P+X \ar@{-{ >>}}@<.5ex>[r]^-{\couniv{1_P}{0}} & P\ar[r] \ar@{{ >}->}@<.5ex>[l]^-{\iota_1}  & 0
		}
	\]
	is projective in $\SSE_{P}(\V)$. Therefore, the kernel part of this split extension defines a crossed module which is projective in $\XMod(\V)$ by Theorem~\ref{thm - projective in XMod(C)}. However, by Corollary~\ref{cor - free object XMod(V)}, it is not a free object in $\XMod(\V)$.
\end{proof}

\section{The condition \P}\label{sec - condition P}
One of the prototype examples of a semi-abelian variety is the category of groups. This category has many nice properties that are shared with its category of internal categories, i.e. the crossed modules over groups. Yet, $\XMod$ does not share all the properties of $\Gp$: for instance, it is not a Schreier variety \cite{Carrasco-Homology} and it does not admit functorial fiberwise localizations~\cite{MSS}. However, for both categories, the class of projectives objects is closed under protosplit subobjects, i.e.\ $\Gp$ and $\XMod$ satisfy \defn{Condition \P}.

In a semi-abelian category, we call \P\ the statement that for each split short exact sequence
\begin{equation}
	\xymatrix{0 \ar[r] & K \ar@{{ |>}->}[r]^-k & X \ar@{-{ >>}}@<.55ex>[r]^-{f} & Y \ar[r] \ar@{{ >}->}@<.55ex>[l]^-{s} & 0}\label{eq - def condition P}
\end{equation}
if $X$ is a projective object then $K$ is projective. Being introduced in~\cite{MC-FR-TVdL}, we already know some trivial examples of this condition (such as abelian categories and Schreier varieties) and a few non-trivial examples, such as the category of Lie algebras over an unital commutative ring $\K$.

The purpose of this section is to investigate the close link between the condition \P\ for a given semi-abelian variety $\V$ and for its variety of internal crossed modules. In this section, we prove that $\V$ satisfies \P\ if and only if $\XMod(\V)$ does. As a result, if we start with a trivial or non-trivial example of a variety satisfying \P, then we obtain an infinite list of examples of varieties satisfying \P: if $\V$ does then so does $\XMod(\V)$, then also $\XMod(\XMod(\V))$, and so on.

\subsection{A condition for $\XMod(\V)$ to satisfy \P\ }

\begin{lemma}\label{lemma - lifting crossed modules}
	Let $\V$ be a semi-abelian variety and consider a morphism of short exact sequences
	\[
		\xymatrix{
		T\ar@{{ |>}->}[r]^- k \ar@{->>}[d]_-{f_T} & G \ar@{->>}[d]^-{f_G}\ar@{-{ >>}}[r]^-{\coker(k)} & \Coker(k)\ar[d]^-{h} \\
		P\ar@{{ |>}->}[r]_-{k'} & Q\ar@{-{ >>}}[r]_-{\coker(k')} & \Coker(k')
		}
	\]
	where $Q$ is projective object in $\V$ and the $f_T$, $f_G$ are regular epimorphisms.
	If $\Coker(k')$ is projective in $\V$ then there exists a section in $\XMod(\V)$ for the regular epimorphism $(f_T,f_G)$.
\end{lemma}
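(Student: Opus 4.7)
My plan is to produce morphisms $g_P \colon P \to T$ and $g_Q \colon Q \to G$ in $\V$ satisfying $f_T g_P = 1_P$, $f_G g_Q = 1_Q$, and $k g_P = g_Q k'$. Because both short exact sequences carry the canonical kernel-type crossed module structure of Example~\ref{ex - kernel and XMod}, the equivariance required for $(g_P, g_Q)$ to be a morphism in $\XMod(\V)$ is automatic as soon as the commutative square $k g_P = g_Q k'$ is established (again by Example~\ref{ex - kernel and XMod}). Hence such a pair will constitute the desired section of the regular epimorphism $(f_T, f_G)$.

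Since $\Coker(k')$ is projective and $\coker(k')$ is a regular epimorphism, I first split the lower short exact sequence: I choose $s' \colon \Coker(k') \to Q$ with $\coker(k') s' = 1_{\Coker(k')}$. This presents $Q$ as a semi-direct product $P \rtimes_{\phi} \Coker(k')$ in the sense of Section~\ref{subsec - actions and semidirect product}, whose underlying coequalizer description allows a morphism out of $Q$ to be specified componentwise on $P$ and $\Coker(k')$. Using projectivity of $\Coker(k')$ once more, together with the regular epimorphism $f_G$, I lift $s'$ to a morphism $\tilde{s} \colon \Coker(k') \to G$ with $f_G \tilde{s} = s'$; and using projectivity of $Q$, I lift $1_Q$ to a morphism $g_Q^0 \colon Q \to G$ with $f_G g_Q^0 = 1_Q$.

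The central step is to upgrade $g_Q^0$ to a lift $g_Q$ whose composite $g_Q k'$ factors through $k$. The obstruction is the morphism $\coker(k) g_Q^0 k' \colon P \to \Coker(k)$, and a short computation shows it is annihilated by $h$:
\[
h \, \coker(k) \, g_Q^0 \, k' \;=\; \coker(k') \, f_G \, g_Q^0 \, k' \;=\; \coker(k') \, k' \;=\; 0,
\]
so it lands in $\Ker(h)$. The idea for eliminating this obstruction is to reassemble the lift on the two pieces of $Q$ coming from $k'$ and $s'$: on $s'(\Coker(k'))$ one prescribes the value $\tilde s$, while on $k'(P)$ one keeps $g_Q^0 k'$, and the resulting induced morphism out of the coproduct $P + \Coker(k')$ must then be shown to coequalize the pair defining $Q \cong P \rtimes_\phi \Coker(k')$. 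This compatibility check, which rests on the action-core description of Section~\ref{subsec - actions and semidirect product} together with the fact that $g_Q^0$ and $\tilde s$ agree with the given data $k'$, $s'$ after $f_G$, is where I expect the main technical difficulty.

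Once a lift $g_Q$ with $g_Q k'$ factoring through $k$ has been obtained, the equation $k g_P = g_Q k'$ defines $g_P \colon P \to T$ uniquely. The identity $k' f_T g_P = f_G k g_P = f_G g_Q k' = k'$, combined with the fact that $k'$ is monic, yields $f_T g_P = 1_P$, and Example~\ref{ex - kernel and XMod} then confirms that $(g_P, g_Q)$ is a morphism in $\XMod(\V)$, completing the construction.
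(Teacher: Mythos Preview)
Your opening and closing paragraphs are fine and agree with the paper: once a section $g_Q$ of $f_G$ satisfying $g_Q k'=k\,g_P$ for some $g_P$ has been produced, Example~\ref{ex - kernel and XMod} handles equivariance and the argument is complete. The difficulty is entirely in your ``central step'', and there the plan does not work as written.

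You propose to keep the value $g_Q^{0} k'$ on the $k'$-component and only modify the $s'$-component to $\tilde s$. But the property you are after is that $g_Q k'$ factor through $k$, i.e.\ $\coker(k)\, g_Q k'=0$; and with your recipe one still has $g_Q k'=g_Q^{0} k'$, so the obstruction $\coker(k)\, g_Q^{0} k'$ survives intact. Replacing the value on $s'(\Coker(k'))$ cannot affect the value on $k'(P)$. Moreover, even the preliminary coequalizer check you flag as ``technical'' fails in general: using $k'\phi=\couniv{s'}{k'}h_{\Coker(k'),P}$ one finds that the required identity becomes $\couniv{\tilde s}{g_Q^{0} k'}h_{\Coker(k'),P}=\couniv{g_Q^{0} s'}{g_Q^{0} k'}h_{\Coker(k'),P}$, which would force a relation between the two unrelated lifts $\tilde s$ and $g_Q^{0} s'$ of $s'$. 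So the construction neither glues nor, if it did, would it achieve the goal.

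The missing idea is to control $f_G g_Q$ and $\coker(k) g_Q$ \emph{simultaneously}. The paper does this by observing that the right-hand square is a \emph{regular pushout} (since $f_T$ is a regular epimorphism), so the comparison map $u\colon G\to Q\times_{\Coker(k')}\Coker(k)$ is a regular epimorphism. Projectivity of $\Coker(k')$ gives a section $j_Z$ of $h$, whence a section $j_Q=(1_Q,\ j_Z\,\coker(k'))$ of the pullback projection $p_1$; then projectivity of $Q$ lifts $j_Q$ along $u$ to $g_G\colon Q\to G$. By construction $f_G g_G=p_1 u\, g_G=1_Q$ and $\coker(k) g_G=p_2 u\, g_G=j_Z\,\coker(k')$, so $\coker(k) g_G k'=0$ and $g_G k'$ factors through $k$ for free. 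If you want to salvage your approach, this pullback-and-lift manoeuvre is exactly what should replace your attempted gluing.
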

\begin{proof}
	By Example~\ref{ex - kernel and XMod}, it is clear that the left-hand square of the above morphism of short exact sequences defines a morphism of crossed modules.

	As a consequence of the Short Five Lemma for regular epimorphisms (see for instance~\cite{Borceux-Bourn}), since $f_T$ is a regular epimorphism, the right-hand square is a regular pushout in the sense of~\cite{Carboni-Kelly-Pedicchio,Bourn2001}. Indeed, $h$ is a regular epimorphism since so is $f_G$.

	Via the diagram
	\[
		\xymatrix{
		T\ar@{{ |>}->}[r]^- k \ar@{->>}[dd]_-{f_T} & G \ar@{-{ >>}}[rr]^-{\coker(k)} \ar@{->>}[dd]_-{f_G} \ar@{->>}[dr]^ -{u} & & \Coker(k) \ar@{->>}[dd]_-{h} \\
		& & Q\times_{\Coker(k')} \Coker(k) \pullback \ar@{->>}[ru]|{p_2} \ar@{->>}[dl]|{p_1} & \\
		P \ar@/_1pc/@{-->}[uu]|{g_T} \ar@{{ |>}->}[r]_- {k'} & Q \ar@/_1pc/@{-->}[uu]|{g_G} \ar@/_1pc/@{-->}[ur]|{j_Q} \ar@{-{ >>}}[rr]_-{\coker(k')} & & \Coker(k') \ar@/_1pc/@{-->}[uu]_-{j_Z}
		}
	\]
	where the induced comparison morphism $u$ is a regular epimorphism, we can define a section $j_Z$ of the morphism $h$. Via the pullback, we can define $j_Q$ (a section of $p_1$) which can be lifted over $u$ to the morphism $g_G$. Finally, such a morphism induces the morphism $g_T$ which turns out to be a section of $f_T$. This defines a morphism of crossed modules (see Example~\ref{ex - kernel and XMod}).
\end{proof}

Combining the results of the previous sections, we can prove that $\XMod(\V)$ satisfies \P\ as soon as $\V$ does. For the proof, we will use another characterization of \P, valid in the case of pointed Mal'tsev varieties of algebras:
\begin{proposition}\label{propo_equivalent_conditions}\cite[Proposition 3.3.1.]{MC-FR-TVdL}
	Let $\W$ be a pointed Mal'tsev variety of algebras, with forgetful functor $U\colon \W\to \Ens$ and its left adjoint $F_r\colon \Ens\to \W$. The following conditions are equivalent:
	\begin{tfae}
		\item the condition \P\ holds in $\W$;
		\item for any split epimorphism $g\colon A \to B$ in $\Ens$, the kernel of $F_r(g)$ is projective in $\W$.\noproof
	\end{tfae}
\end{proposition}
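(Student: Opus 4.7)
The plan is to prove both implications.

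For (i)$\Rightarrow$(ii), a split epimorphism $g\colon A\to B$ in $\Ens$ with section $t$ is sent by the left adjoint $F_r$ to a split epimorphism $F_r(g)\colon F_r(A)\to F_r(B)$ in $\W$ with section $F_r(t)$. Its kernel then fits in a split short exact sequence whose middle object $F_r(A)$ is free and hence projective, so Condition~\P\ directly yields that $\kker F_r(g)$ is projective.

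For (ii)$\Rightarrow$(i), start with a split short exact sequence $0\to K\overset{k}{\to} X\overset{f}{\to} Y\to 0$ in $\W$, with section $s\colon Y\to X$ and $X$ projective. First, $Y$ is itself projective, being a retract of $X$ via $s$. Applying $F_rU$ produces the split short exact sequence
\[
	0\to N\to F_r(U(X))\overset{F_r(U(f))}{\longrightarrow} F_r(U(Y))\to 0,
\]
split by $F_r(U(s))$; by the hypothesis~(ii), $N\defeq \kker F_r(U(f))$ is projective in $\W$. The counit $\epsilon\colon F_rU\Rightarrow \mathrm{id}_{\W}$, by naturality, assembles these into a morphism of split short exact sequences onto the original one, whose two right-hand vertical maps $\epsilon_X$ and $\epsilon_Y$ are regular epimorphisms. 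The Short Five Lemma for regular epimorphisms (valid in the protomodular setting we are in) then gives that the induced map $\bar{\epsilon}\colon N\to K$ is a regular epimorphism, and consequently the right-hand square relating $F_r(U(f))$, $f$, $\epsilon_X$ and $\epsilon_Y$ is a regular pushout in the sense of~\cite{Carboni-Kelly-Pedicchio,Bourn2001}.

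The crucial step is to construct a section of $\bar{\epsilon}$. Choose any section $\sigma_Y$ of $\epsilon_Y$ (which exists since $Y$ is projective). Then $(1_X,\sigma_Y f)$ is a well-defined morphism from $X$ into the pullback $X\times_Y F_r(U(Y))$ of $f$ and $\epsilon_Y$. Since the square is a regular pushout, the comparison morphism $F_r(U(X))\to X\times_Y F_r(U(Y))$ is a regular epimorphism, and projectivity of $X$ permits lifting $(1_X,\sigma_Y f)$ along it to a morphism $\sigma_X\colon X\to F_r(U(X))$ satisfying both $\epsilon_X\sigma_X=1_X$ and $F_r(U(f))\sigma_X=\sigma_Y f$. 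The second identity forces $\sigma_X k$ to factor through $N$, yielding $\sigma_K\colon K\to N$; combined with the first identity and $k$ being monic, this gives $\bar{\epsilon}\sigma_K=1_K$. Hence $K$ is a retract of $N$, and therefore projective.

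The main obstacle I expect is precisely this compatibility: producing $\sigma_X$ satisfying both $\epsilon_X\sigma_X=1_X$ and $F_r(U(f))\sigma_X=\sigma_Y f$ simultaneously. The regular-pushout reformulation of the Short Five Lemma collapses the problem into a single lifting along a regular epimorphism, which is then disposed of by projectivity of $X$; this is also the step most reliant on the ambient pointed protomodular (or sufficiently strong Mal'tsev) framework.
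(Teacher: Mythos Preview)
The paper does not supply a proof here; the proposition is quoted from \cite{MC-FR-TVdL} and left without argument. Your proof is correct and natural—the regular-pushout-plus-lifting manoeuvre you use for (ii)$\Rightarrow$(i) is precisely the mechanism the present paper deploys in Lemma~\ref{lemma - lifting crossed modules}.

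One point warrants care. You invoke the Short Five Lemma for regular epimorphisms ``valid in the protomodular setting we are in'', but the hypothesis is only that $\W$ is a pointed \emph{Mal'tsev} variety, and such varieties need not be protomodular. The conclusion you actually require—that the comparison $F_r(U(X))\to X\times_Y F_r(U(Y))$ is a regular epimorphism—nonetheless holds, because both rows are split epimorphisms with compatible sections and surjectivity can be verified directly with the Mal'tsev term $p$: given $(x,w)$ with $f(x)=\epsilon_Y(w)$, pick any $v_1$ with $\epsilon_X(v_1)=x$, set $v_0\defeq F_r(U(s))(w)$, and put
\[
v\defeq p\bigl(v_1,\;F_r(U(s))F_r(U(f))(v_1),\;v_0\bigr);
\]
the Mal'tsev identities then give $\epsilon_X(v)=p(x,sf(x),sf(x))=x$ and $F_r(U(f))(v)=p(F_r(U(f))(v_1),F_r(U(f))(v_1),w)=w$. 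With this adjustment your proof is valid in the stated generality; as written it already covers the semi-abelian case, which is all the paper needs for its application to $\XMod(\V)$.
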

The assumption of the previous result holds since it is well known that $\XMod(\V)$ is a semi-abelian variety as soon as $\V$ is a semi-abelian variety~\cite{Gran-Rosicky,Bourn-Gran}.

Let $f\colon X\to Y$ a split epimorphism in $\Set$ with a section $s$. By the construction of the free object in $\XMod(\V)$ (see Section~\ref{sec - free object}), we have
\begin{equation}
	\resizebox{.9\textwidth}{!}{$\vcenter{\xymatrix@C=4.5pc{
		& 0\ar@{-->}[d] & 0\ar[d] & 0\ar[d] & \\
		0\ar[r] & P \ar@{{ |>}->}[r] \ar@{-->}[d] & F_r(X)\flat F_r(X)\ar@{-{ >>}}@<.55ex>[r]^-{F_r(f)\flat F_r(f)} \ar@{{ |>}->}[d] & F_r(Y)\flat F_r(Y)\ar[r] \ar@{{ >}->}@<.55ex>[l]^-{F_r(s)\flat F_r(s)} \ar@{{ |>}->}[d] & 0 \\
		0\ar[r] & Q \ar@{{ |>}->}[r] \ar@{-->}@<.55ex>[d] & F_r(X)+F_r(X) \ar@{-{ >>}}@<.55ex>[r]^-{F_r(f)+F_r(f)} \ar@<.55ex>@{-{ >>}}[d]^-{\couniv{1_{F_r(X)}}{0}} & F_r(Y)+F_r(Y) \ar[r] \ar@{{ >}->}@<.55ex>[l]^-{F_r(s)+F_r(s)} \ar@<.55ex>@{-{ >>}}[d]^-{\couniv{1_{F_r(X)}}{0}} & 0\\
		0\ar[r] & Z \ar@{{ |>}->}[r] \ar@{-->}@<.55ex>[u] \ar@{-->}[d] & F_r(X) \ar@{{ >}->}@<.55ex>[u]^-{\iota_1} \ar@{-{ >>}}@<.55ex>[r]^-{F_r(f)} \ar[d]  & F_r(Y)\ar@{{ >}->}@<.55ex>[u]^-{\iota_1}\ar[r] \ar@{{ >}->}@<.55ex>[l]^-{F_r(s)} \ar[d]  & 0 \\
		& 0 & 0 & 0 &
		}}$}\label{diagm: P in XMod}
\end{equation}
where
\begin{itemize}
	\item all the horizontal sequences are (split) short exact sequences by construction;
	\item the middle vertical sequence and the right vertical sequence are, respectively, the free object on the set $X$ and $Y$, and there are (split) short exact sequences by construction;
	\item the morphisms in the left-hand vertical sequence are restrictions to the kernels;
	\item the free objects $F_r(X)$ and $F_r(Y)$ are projective in $\V$, and therefore ${F_r(X)+F_r(X)}$ and $F_r(Y)+F_r(Y)$ are projective as well;
	\item since $\V$ satisfies \P, we have respectively that
	      \begin{itemize}
		      \item the kernels $F_r(X)\flat F_r(X)$ and $F_r(Y)\flat F_r(Y)$ of the solid vertical split short exact sequences are also projective;
		      \item the object $Z\coloneq \ker(F_r(f))$ is projective in $\V$;
		      \item the object $Q\coloneq \ker(F_r(f)+F_r(f))$ is projective in $\V$;
		      \item the object $P\coloneq \ker(F_r(f)\flat F_r(f))$ is projective in $\V$;
	      \end{itemize}
	\item the left-hand vertical sequence with the dashed arrows is also a (split) short exact sequence since the (kernel) functor preserves split epimorphisms and kernels commute with kernels.
\end{itemize}

We will prove that the internal crossed module $(P,Q,\overline{\chi},k)$ is projective.

\begin{theorem}\label{thm - proof of P XMod(V)}
	If the semi-abelian variety $\V$ satisfies the condition \P, then does the category $\XMod(\V)$.
\end{theorem}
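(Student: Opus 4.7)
The plan is to invoke Proposition~\ref{propo_equivalent_conditions}, which is available since $\XMod(\V)$ is again a semi-abelian (in particular pointed Mal'tsev) variety by~\cite{Gran-Rosicky,Bourn-Gran}. Thus it suffices to show that for every split epimorphism $f\colon X\to Y$ in $\Set$, the kernel of $F_r(f)$ inside $\XMod(\V)$ is a projective internal crossed module. Using the explicit description of the free objects in $\XMod(\V)$ from Corollary~\ref{cor - free object XMod(V)}, this kernel is precisely the left-hand vertical split short exact sequence of diagram~\eqref{diagm: P in XMod}, which, packaged as an internal crossed module, is the quadruple $(P,Q,\overline{\chi},k)$ in the sense of Example~\ref{ex - kernel and XMod}.

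The key observation is that the bullet points preceding the statement have already established that $P$, $Q$ and $Z$ are projective in $\V$: $Q$ because it is the kernel of the regular epimorphism $F_r(f)+F_r(f)$ between projective objects and $\V$ satisfies \P, and $Z$ because it is the kernel of $F_r(f)$, again by \P\ in $\V$. Since $Z$ is exactly $\Coker(k)$ (the bottom-left short exact sequence being exact), the pair $(P,Q,\overline{\chi},k)$ meets the hypotheses of Lemma~\ref{lemma - lifting crossed modules} in its ``target'' slot.

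To prove $(P,Q,\overline{\chi},k)$ is projective in $\XMod(\V)$, I would take an arbitrary regular epimorphism
\[
(f_T,f_G)\colon (T,G,\phi,\partial)\twoheadrightarrow (P,Q,\overline{\chi},k)
\]
in $\XMod(\V)$ and produce a section. Since regular epimorphisms in $\XMod(\V)$ are exactly those pairs for which both $f_T$ and $f_G$ are regular epimorphisms in $\V$, the morphism fits as the left-hand square of a morphism of short exact sequences
\[
\xymatrix{
T\ar@{{ |>}->}[r]^-{\partial}\ar@{->>}[d]_-{f_T} & G\ar@{->>}[d]^-{f_G}\ar@{-{ >>}}[r]^-{\coker(\partial)} & \Coker(\partial)\ar[d]^-{h}\\
P\ar@{{ |>}->}[r]_-{k} & Q\ar@{-{ >>}}[r]_-{\coker(k)} & \Coker(k)
}
\]
to which Lemma~\ref{lemma - lifting crossed modules} applies verbatim, since $Q$ is projective in $\V$ and $\Coker(k)=Z$ is projective in $\V$. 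The lemma then delivers a section $(g_T,g_G)$ of $(f_T,f_G)$ in $\XMod(\V)$, proving projectivity of $(P,Q,\overline{\chi},k)$.

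There is essentially no hard step left: Lemma~\ref{lemma - lifting crossed modules} is the real work, and the diagram~\eqref{diagm: P in XMod} has been engineered precisely so that the three objects $P$, $Q$, $Z$ appearing in its left column inherit projectivity from \P\ in $\V$. The only mildly delicate point is confirming that $\Coker(k)$ in the target crossed module is literally the same object $Z$ obtained as $\ker(F_r(f))$ in $\V$; this follows from the exactness of the leftmost vertical sequence of~\eqref{diagm: P in XMod}, whose validity is justified in the last bullet point preceding the theorem. With this identification in hand, the proof collapses to an application of Proposition~\ref{propo_equivalent_conditions} followed by Lemma~\ref{lemma - lifting crossed modules}.
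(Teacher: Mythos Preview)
Your argument has a genuine gap at the crucial step where you invoke Lemma~\ref{lemma - lifting crossed modules}. That lemma is stated for a morphism of \emph{short exact sequences}: both rows must have the form $T\hookrightarrow G\twoheadrightarrow\Coker$, with $T$ the kernel of the right-hand map. In your application, however, the top row comes from an \emph{arbitrary} regular epimorphism $(f_T,f_G)\colon (T,G,\phi,\partial)\twoheadrightarrow (P,Q,\overline{\chi},k)$ in $\XMod(\V)$, and for a general internal crossed module the boundary $\partial$ need be neither monic nor a kernel. Consequently the sequence $T\xrightarrow{\partial}G\to\Coker(\partial)$ is not short exact, and the proof of Lemma~\ref{lemma - lifting crossed modules} breaks down: the restriction-to-kernels step producing $g_T$ only yields a factorisation of $g_G k$ through $\ker(\coker(\partial))=\Im(\partial)$, not through $T$; and even if $g_T$ existed, the equivariance verification in the lemma relies on both crossed module structures being the canonical ones of Example~\ref{ex - kernel and XMod}, which need not hold for the given action $\phi$.

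The paper avoids this problem by a different strategy: rather than splitting an arbitrary regular epimorphism onto $(P,Q,\overline{\chi},k)$, it exhibits this object as a \emph{retract of a specific projective}. Using Lemma~\ref{lemma - H-group enough projectives} one covers the split extension of $Z$ with kernel $P$ by a projective object in $\SSE_Z(\V)$; by Theorem~\ref{thm - projective in XMod(C)} that cover yields a projective crossed module $(R_P,R_P\rtimes Z,\overline{\chi'},k')$ whose boundary $k'$ \emph{is} a kernel by construction. Now Lemma~\ref{lemma - lifting crossed modules} legitimately applies to the regular epimorphism $(f_P,f_Q)\colon(R_P,R_P\rtimes Z)\twoheadrightarrow(P,Q)$, producing the retraction. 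Your shortcut skips Theorem~\ref{thm - projective in XMod(C)} entirely, and without it there is no projective object in sight from which $(P,Q,\overline{\chi},k)$ could inherit projectivity.
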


\begin{proof}
	The crossed module $(P,Q,\overline{\chi}, k)$ defined in~\eqref{diagm: P in XMod} is a retract of a projective crossed module constructed via Theorem~\ref{thm - projective in XMod(C)}: by Lemma~\ref{lemma - H-group enough projectives}, we can cover the split extension $P$ of $Z$ by a projective split extension of $Z$ as in
	\[
		\xymatrix{
		0 \ar[r] & R_P \ar@{->>}[d]_{f_P} \ar@{{ |>}->}[r]^-{k'} & R_P\rtimes Z \ar@{->>}[d]_{f_Q} \ar@{-{ >>}}@<.55ex>[r]^-{f'} & Z \ar@{=}[d] \ar[r] \ar@{{ >}->}@<.55ex>[l]^-{s'} & 0\\
		0 \ar[r] & P \ar@{{ |>}->}[r]^-{k} & Q \ar@{-{ >>}}@<.55ex>[r]^-{g} & Z \ar[r] \ar@{{ >}->}@<.55ex>[l]^-{t} & 0
		}
	\]

	Finally, we can conclude via Lemma~\ref{lemma - lifting crossed modules} that $(P,Q,\overline{\chi}, k)$ is a retract of the crossed module $(R_P, R_P\rtimes Z, \overline{\chi'}, k')$ which is projective by Proposition~\ref{thm - projective in XMod(C)}.
\end{proof}

\subsection{Some results concerning $\pi_0$}

Consider a semi-abelian variety $\V$ and the \defn{connected components functor} $\pi_0\colon \XMod(\V)\to \V$ defined on objects as the coequalizer of the morphisms $d$ and $c$ coming from the underlying reflexive graph structure
\[
	\xymatrix{0 \ar[r] & T \ar@{{ |>}->}[r]^-k & T\rtimes_{\psi} G \ar@{-{ >>}}@<1ex>[r]^-{d}\ar@<-1ex>[r]_-{c} & G \ar[r] \ar@{{ >}->}[l]|-{e} & 0}
\]
where $(T,G,\psi,\partial)\in \XMod(\V)$. We can also define $\pi_0$ equivalently~\cite[Proposition 3.9]{EverVdL2} as the cokernel of $ck=\partial$.

Since the category is semi-abelian, the functor is protoadditive~\cite{EG-honfg}. In addition, it is the left adjoint to the \defn{discrete crossed module functor} $D\colon \V\to \XMod(\V)$ which sends an object $X$ to the (discrete) crossed module $0\to X$ with the trivial action. As a result, $\pi_0$ preserves all colimits. We can also prove that it preserves \defn{proper morphisms} (morphisms with cokernel-kernel factorization):

\begin{lemma}\label{lemma - reflector sequ right exact}
	If $\V$ is a semi-abelian variety and $F\colon {\V\to \W}$ is the reflector from $\V$ to a subvariety $\W$, then $F$ is sequentially right-exact~\cite{PVdL1}. In particular, $F$ preserves proper morphisms.
\end{lemma}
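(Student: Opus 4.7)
The plan is to derive both claims as formal consequences of $F$ being a left adjoint between pointed semi-abelian categories. Since $F$ is the reflector to the (semi-abelian) subvariety $\W$, it is left adjoint to the inclusion functor, and hence preserves all colimits; in particular it preserves the zero object, regular epimorphisms, and cokernels.

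First I would establish sequential right-exactness directly. Given a short exact sequence $0 \to K \xrightarrow{k} A \xrightarrow{f} B \to 0$ in $\V$, the identification $f = \coker(k)$ is preserved by $F$, so $F(f) = \coker(F(k))$ in $\W$. This immediately gives exactness at $F(B)$ (as $F(f)$ is a regular epimorphism) and, using the standard semi-abelian fact that the image of a morphism equals the kernel of its cokernel, it also gives exactness at $F(A)$, namely $\Ima(F(k)) = \ker(\coker(F(k))) = \ker(F(f))$.

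Next I would deduce preservation of proper morphisms. A morphism $f \colon A \to B$ in $\V$ is proper precisely when it admits a factorization $f = i \circ p$ with $p \colon A \twoheadrightarrow I$ a regular epimorphism and $i \colon I \hookrightarrow B$ a kernel, say the kernel of some $q \colon B \to Q$. Applying $F$ to the short exact sequence $0 \to I \to B \to Q \to 0$ and invoking the sequential right-exactness just established, one obtains $\Ima(F(i)) = \ker(F(q))$, which is in particular a kernel in $\W$. Since $F(p)$ remains a regular epimorphism, $\Ima(F(f)) = \Ima(F(i) \circ F(p)) = \Ima(F(i))$ is a kernel, and so $F(f)$ is proper.

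I do not anticipate a real obstacle; the whole argument is formal. The only subtle points are to check that the conventions for ``sequentially right-exact'' in~\cite{PVdL1} and for ``proper morphism'' match the ones used here, and to confirm that a subvariety of a semi-abelian variety inherits enough semi-abelian structure (notably the image-equals-kernel-of-cokernel identity) for the calculation to go through.
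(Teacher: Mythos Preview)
Your argument contains a genuine gap at the key step. You claim as a ``standard semi-abelian fact'' that the image of a morphism equals the kernel of its cokernel, and use this to conclude $\Ima(F(k)) = \ker(\coker(F(k))) = \ker(F(f))$. But this identity is \emph{not} valid in semi-abelian categories: it characterises precisely the \emph{proper} morphisms. In $\Gp$, for instance, the inclusion of a non-normal subgroup $H \hookrightarrow G$ has regular image $H$, while the kernel of its cokernel is the normal closure of $H$. So the equality $\Ima(F(k)) = \ker(\coker(F(k)))$ amounts to asserting that $F(k)$ is proper, which is exactly the content of the lemma. Your derivation of sequential right-exactness therefore begs the question, and the second half of your argument (which is correct on its own) rests on this unproven first half.

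The paper proceeds in the opposite order and supplies the missing ingredient. It first proves preservation of proper morphisms directly, using two facts specific to the situation: (a)~the unit components $\eta_X\colon X \to UF(X)$ are regular epimorphisms because $\W$ is a Birkhoff subvariety \cite{Janelidze-Kelly}, and (b)~in a semi-abelian category, the regular image of a kernel along a regular epimorphism is again a kernel \cite{Janelidze-Marki-Tholen}. Concretely, for a kernel $k\colon I \hookrightarrow A$ one considers the naturality square $UF(k)\circ \eta_I = \eta_A \circ k$; since $\eta_I$ is a regular epi, the image of $UF(k)$ equals the image of $\eta_A \circ k$, and by~(b) the latter is a kernel. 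Hence $UF(k)$ is proper in $\V$, and since $\W$ is closed under subobjects, $U$ reflects properness, so $F(k)$ is proper in $\W$. Sequential right-exactness then follows. Your purely formal approach, relying only on $F$ being a left adjoint, cannot succeed without invoking something like~(a) and~(b): a left adjoint between semi-abelian categories need not preserve proper morphisms in general.
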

\begin{proof}
	To be a sequentially right-exact functor, we need to prove that the image of an exact sequence in $\V$
	\[\xymatrix{K \ar[r]^-{k} & A \ar@{-{ >>}}[r]^-{f} & B \ar[r] & 0,}
	\]
	is the exact sequence
	\[
		\xymatrix{F(K) \ar[r]^-{F(k)} & F(A) \ar@{-{ >>}}[r]^-{F(f)} & F(B) \ar[r] & 0.}
	\]

	Since $F$ is a left adjoint functor, it preserves cokernels.

	Therefore, to conclude, it suffices to prove that $F$ preserves proper morphisms as well. Let $g\colon K\to A$ a proper morphism and consider $U\colon {\W\to \V}$ the exact right adjoint to $F$
	\[
		\xymatrix{K \ar@{-{ >>}}[r]_-{c} \ar@{-{ >>}}[d]_-{\eta_K} \ar@/^1.5pc/[rr]^-{g} & \mathrm{Im}(g)\ar@{-{ >>}}[d]|{\eta_{\mathrm{Im}(g)}} \ar@{{ |>}->}[r]_-{k} & A \ar@{-{ >>}}[d]^-{\eta_A} \\
		UF(K) \ar@{-{ >>}}[r]^-{UF(c)} \ar@/_1.5pc/[rr]_-{UF(g)}&UF(\mathrm{Im}(g)) \ar[r]^-{UF(k)} & UF(A) }
	\]
	Since the components of the unit $\eta$ of the adjunction are regular epimorphisms~\cite{Janelidze-Kelly}, and since regular images of kernels are kernels in the semi-abelian $\V$~\cite{Janelidze-Marki-Tholen}, this implies that the monomorphism part of the regular image factorization of $UF(k)$ is a kernel and therefore that the morphism $UF(k)$ is proper.

	Finally, since $\W$ is closed under subobjects, the right adjoint $U$ reflects proper morphisms.
\end{proof}

\begin{example}
	If we consider a semi-abelian variety $\V$, then $\pi_0\colon \XMod(\V)\to \V$ is a reflector from the semi-abelian variety $\XMod(\V)$ to the Birkhoff subcategory $\V$. As a result, by Lemma~\ref{lemma - reflector sequ right exact}, $\pi_0$ preserves proper morphisms.
\end{example}

\subsection{When $\XMod(\V)$ satisfies \P}
For the moment, we only know that if a semi-abelian variety $\V$ satisfies \P, then $\XMod(\V)$ also does (see Theorem~\ref{thm - proof of P XMod(V)}). In this section, we will investigate the other implication.
\begin{lemma}\label{lemma - pi0 preserves projective}
	The functor $\pi_0$ preserves the class of projective objects. As a result, if a discrete crossed module $0\to X$ (for $X\in \C$) is projective, then $X$ is projective in $\C$.
\end{lemma}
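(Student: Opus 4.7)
The plan is to exploit the adjunction $\pi_{0}\dashv D$. Standard abstract nonsense (see e.g.\ \cite{Hilton-Stammbach}) tells us that a left adjoint whose right adjoint preserves regular epimorphisms automatically preserves projective objects, so the first claim reduces to showing that $D$ preserves regular epimorphisms. The second claim then follows by verifying the natural isomorphism $\pi_{0}\comp D\cong 1_{\V}$.

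First I would recall the (already-used) fact that in $\XMod(\V)$ a morphism $(f_{T},f_{G})$ is a regular epimorphism if and only if both $f_{T}$ and $f_{G}$ are regular epimorphisms in $\V$ (as invoked in the proof of Theorem~\ref{thm - projective in XMod(C)}). Given a regular epimorphism $f\colon X\to Y$ in $\V$, the morphism $D(f)\colon(0\to X)\to(0\to Y)$ has $0\to 0$ as its $T$-component and $f$ as its $G$-component, so both are regular epimorphisms in $\V$ and $D(f)$ is a regular epimorphism in $\XMod(\V)$. Hence $D$ preserves regular epimorphisms, and transposing lifting problems across the adjunction $\pi_{0}\dashv D$ shows that $\pi_{0}$ sends projective objects to projective objects.

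For the consequence, I would compute $\pi_{0}(D(X))$ directly. The crossed module $D(X)=(0,X,0,0)$ has boundary $\partial=0\colon 0\to X$, so $\pi_{0}(D(X))=\Coker(\partial)=X$; equivalently $d=c=1_{X}$ on the underlying reflexive graph, whose coequalizer is $1_{X}$. Therefore $\pi_{0}\comp D\cong 1_{\V}$, and applying the first part to the assumption that $D(X)$ is projective yields that $X=\pi_{0}(D(X))$ is projective in $\V$ (here $\V$ should be read as $\C$, matching the statement).

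There is really no hard step: the only thing to verify beyond general adjoint-functor formalism is the levelwise characterization of regular epimorphisms in $\XMod(\V)$, which has already been used in Section~\ref{sec - projective object} and is standard.
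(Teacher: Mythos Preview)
Your proposal is correct and follows essentially the same approach as the paper: the paper also argues that $D$ preserves regular epimorphisms, invokes the standard adjoint-functor fact (citing \cite{Hilton-Stammbach}) to conclude that $\pi_0$ preserves projectives, and then computes $\pi_0(0\to X)=\Coker(0\colon 0\to X)=X$. Your version merely spells out in more detail why $D$ preserves regular epimorphisms via the componentwise characterization, which the paper leaves implicit.
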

\begin{proof}
	Since the functor $D$ preserves regular epimorphisms, $\pi_0$ preserves projective objects (see for instance~\cite{Hilton-Stammbach}). However, $\pi_0(0\to X)={\Coker(0\colon 0\to X)}=X$.
\end{proof}
\begin{lemma}\label{lemma - projective in C implies projective in XMod}
	Consider a projective object $X$ in a semi-abelian category $\C$. Then the discrete crossed module $0\to X$ is projective in $\XMod(\C)$.
\end{lemma}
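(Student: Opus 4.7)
The plan is to exploit the fact that the discrete crossed module $D(X) \coloneq (0,X,0,0)$ has a ``trivial half'': since $X \cosmash 0 = 0$ and the boundary is zero, the precrossed, Peiffer and ternary commutator conditions are automatic, and $D(X)$ really is a crossed module. I would first record the following simple \emph{representability} observation: for any $(T,G,\psi,\partial) \in \XMod(\C)$, a morphism $(f_T,f_G) \colon D(X) \to (T,G,\psi,\partial)$ is forced to have $f_T = 0$, the boundary compatibility $\partial f_T = f_G \cdot 0$ is trivial, and the equivariance square lives over $X \cosmash 0 = 0$ and is vacuous. Hence a morphism out of $D(X)$ is the same thing as a $\C$-morphism $X \to G$, i.e.\ the discrete crossed module functor $D$ is left adjoint to the ``object of arrows'' functor $C \colon \XMod(\C) \to \C$, $(T,G,\psi,\partial) \mapsto G$.

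Now I would show projectivity directly (this is essentially the same argument, but avoids invoking abstract adjoint nonsense). Given a regular epimorphism
\[
(f_T,f_G) \colon (T',G',\psi',\partial') \to (T'',G'',\psi'',\partial'')
\]
in $\XMod(\C)$ and any morphism $(0,g) \colon D(X) \to (T'',G'',\psi'',\partial'')$, I want to lift $(0,g)$ through $(f_T,f_G)$. Recall from the discussion preceding Theorem~\ref{thm - projective in XMod(C)} (and the reference there to~\cite{dMVdL19.4}) that $(f_T,f_G)$ is a regular epimorphism in $\XMod(\C)$ if and only if both $f_T$ and $f_G$ are regular epimorphisms in $\C$. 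In particular $f_G \colon G' \to G''$ is a regular epimorphism, and since $X$ is projective in $\C$ there exists $\tilde g \colon X \to G'$ with $f_G \tilde g = g$.

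It then remains only to check that the pair $(0,\tilde g)$ is a morphism of crossed modules $D(X) \to (T',G',\psi',\partial')$. The compatibility $\partial' \cdot 0 = \tilde g \cdot 0$ is automatic, and the equivariance diagram is
\[
\xymatrix{
X \cosmash 0 \ar[r]^-{0} \ar[d]_{\tilde g \cosmash 0} & 0 \ar[d]^{0} \\
G' \cosmash T' \ar[r]_-{\psi'} & T'
}
\]
which commutes vacuously because $X \cosmash 0 = 0$. By construction $(f_T,f_G) \circ (0,\tilde g) = (0,g)$, and since $(0,g)$ was an arbitrary morphism from $D(X)$, this proves that $D(X)$ is projective in $\XMod(\C)$.

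The main (and essentially only) obstacle is purely bookkeeping: verifying that the crossed module axioms and the morphism conditions are vacuous whenever the ``$T$-component'' is $0$, using $0 \cosmash Y = 0$ for every $Y$. The one genuinely external ingredient is the characterisation of regular epimorphisms in $\XMod(\C)$ as componentwise regular epimorphisms, which is already taken for granted in the paper.
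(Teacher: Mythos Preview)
Your proof is correct and takes a genuinely different route from the paper's. The paper derives the lemma as an immediate corollary of its main technical result, Theorem~\ref{thm - projective in XMod(C)}: it rewrites the discrete crossed module $0\to X$ as the kernel part of the split extension
\[
\xymatrix{0\ar[r] & X\flat 0\cong 0 \ar@{{ |>}->}[r] & X+0\cong X \ar@{-{ >>}}@<.5ex>[r] & X\ar[r] \ar@{{ >}->}@<.5ex>[l] & 0},
\]
invokes Lemma~\ref{lemma - example projective in SSEP} to see that this is projective in $\SSE_X(\C)$, and then applies Theorem~\ref{thm - projective in XMod(C)}. Your argument bypasses that machinery entirely: you observe that $\Hom_{\XMod(\C)}(D(X),(T,G,\psi,\partial))\cong \Hom_{\C}(X,G)$ because every condition involving the $T$-component is vacuous over $X\cosmash 0=0$, and then lift directly using projectivity of $X$ and the componentwise description of regular epimorphisms. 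This is more elementary and self-contained; the paper's approach, by contrast, illustrates that Theorem~\ref{thm - projective in XMod(C)} already subsumes this special case. One small terminological slip: the functor $(T,G,\psi,\partial)\mapsto G$ returns the object of \emph{objects} of the associated internal groupoid, not the object of arrows (which would be $T\rtimes_\psi G$); this does not affect the argument.
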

\begin{proof}
	Since $0$ is the free object on the empty set and since the discrete crossed module $0\to X$ can be rewritten as the kernel part of
	\[
		\xymatrix{
		0\ar[r] & X\flat 0\cong 0 \ar@{{ |>}->}[r]^-{\kappa_{X,0}} & X+0\cong X \ar@{-{ >>}}@<.5ex>[r]^-{\couniv{1_X}{0}} & X\ar[r] \ar@{{ >}->}@<.5ex>[l]^-{\iota_1}  & 0
		}
	\]
	the result follows from Lemma~\ref{lemma - example projective in SSEP} and Theorem~\ref{thm - projective in XMod(C)}.
\end{proof}
\begin{theorem}\label{thm - P iff}
	If we consider a semi-abelian variety $\V$, then $\V$ satisfies \P\ if and only if $\XMod(\V)$ satisfies \P.
\end{theorem}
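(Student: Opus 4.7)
The forward direction is already Theorem~\ref{thm - proof of P XMod(V)}, so the plan is to establish the converse. My strategy is to transport a putative split extension in $\V$ into $\XMod(\V)$ via the discrete crossed module functor $D\colon \V\to \XMod(\V)$, apply Condition~\P\ there, and then come back via $\pi_0$, using that $D$ preserves both projective objects and split short exact sequences while $\pi_0\circ D\cong 1_\V$.

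Concretely, suppose $\XMod(\V)$ satisfies \P\ and consider a split short exact sequence
\[
\xymatrix{0 \ar[r] & K \ar@{{ |>}->}[r]^-{k} & X \ar@{-{ >>}}@<.55ex>[r]^-{f} & Y \ar[r] \ar@{{ >}->}@<.55ex>[l]^-{s} & 0}
\]
in $\V$ with $X$ projective. Applying $D$ term-wise yields the split short exact sequence of discrete crossed modules
\[
\xymatrix{0 \ar[r] & D(K) \ar@{{ |>}->}[r]^-{D(k)} & D(X) \ar@{-{ >>}}@<.55ex>[r]^-{D(f)} & D(Y) \ar[r] \ar@{{ >}->}@<.55ex>[l]^-{D(s)} & 0}
\]
in $\XMod(\V)$; this is immediate because morphisms of discrete crossed modules are just morphisms in $\V$ and kernels/cokernels in $\XMod(\V)$ of such morphisms are computed degreewise.

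By Lemma~\ref{lemma - projective in C implies projective in XMod}, $D(X)$ is a projective object of $\XMod(\V)$. Condition~\P\ in $\XMod(\V)$ then forces $D(K)$ to be projective in $\XMod(\V)$. Finally, since $\pi_0\circ D\cong 1_\V$ (the cokernel of the zero boundary $0\to K$ is $K$ itself) and since $\pi_0$ preserves projective objects by Lemma~\ref{lemma - pi0 preserves projective}, we conclude that $K\cong \pi_0(D(K))$ is projective in $\V$, establishing \P\ for $\V$.

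I do not anticipate a genuine obstacle here: all the heavy lifting has been done in the preceding lemmas. The only point that deserves a line of justification is that $D$ preserves split short exact sequences, but this is transparent from the definition of $D$ together with the fact that the morphisms in the image of $D$ have zero $T$-component, so kernels, cokernels and splittings in $\XMod(\V)$ restrict to their counterparts in $\V$.
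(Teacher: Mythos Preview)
Your proof is correct and follows essentially the same approach as the paper: embed the split short exact sequence into $\XMod(\V)$ via the discrete functor $D$, use Lemma~\ref{lemma - projective in C implies projective in XMod} to see that $D(X)$ is projective, apply Condition~\P\ in $\XMod(\V)$ to deduce $D(K)$ is projective, and then invoke Lemma~\ref{lemma - pi0 preserves projective} (together with $\pi_0\circ D\cong 1_\V$) to conclude that $K$ is projective in $\V$. The only cosmetic difference is that the paper draws the resulting split short exact sequence of discrete crossed modules explicitly rather than appealing to the fact that $D$ preserves split short exact sequences.
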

\begin{proof}
	By Theorem~\ref{thm - proof of P XMod(V)}, it suffices to prove the necessary condition. Consider a split short exact sequence as~\eqref{eq - def condition P} where $X$ is projective in $\V$. This can be extended to a split short exact sequence in $\XMod(\V)$ as
	\[
		\xymatrix{
		0 \ar[r]& 0\ar[d] \ar@{=}[r] & 0 \ar@{=}[r]\ar[d]& 0\ar[d] \ar[r]&0\\
		0 \ar[r] & K \ar@{{ |>}->}[r]^-k & X \ar@{-{ >>}}@<.55ex>[r]^-{f} & Y \ar[r] \ar@{{ >}->}@<.55ex>[l]^-{s} & 0
		}
	\]
	Since $X$ is projective in $\V$, then by Lemma~\ref{lemma - projective in C implies projective in XMod}, the discrete crossed module $0\to X$ is projective in $\XMod(\V)$. As a result, $0\to K$ is also projective by the condition \P. Finally, $K$ is projective by Lemma~\ref{lemma - pi0 preserves projective}.
\end{proof}

\section{Non-additive derived functors of \texorpdfstring{$\pi_0$}{Pi0}}\label{sec - non additive functor pi0}

The previous sections explain what are the assumptions required on a given semi-abelian variety of algebras $\V$ to guarantee the computation of the non-additive derived functors (in the sense of \cite{MC-FR-TVdL}) for $\pi_0$.

\begin{theorem}\label{thm - derived functor XMod(V)}
	Let $\V$ be a semi-abelian variety where the condition \P\ holds. If we consider the protoadditive functor $\pi_0\colon \XMod(\V) \to \V$, then we can compute the non-additive derived functor $\Left_n(\pi_0)$ for any $n\in \Z$ and for any object in $\XMod(\V)$.
\end{theorem}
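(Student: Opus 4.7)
The plan is to simply verify that $\pi_{0}\colon \XMod(\V)\to \V$ satisfies all the hypotheses needed by the construction of non-additive left derived functors from~\cite{MC-FR-TVdL}. Recall that in that framework, in order to compute $\Left_n(F)$ for all $n\in \Z$ and for every object of the source, one needs the source category to be semi-abelian with enough projectives, to satisfy Condition~\P, and the functor $F$ to be protoadditive, to preserve coproducts, and to preserve proper morphisms. The strategy is thus to point, item by item, at where in the paper each of these properties has already been established for $\XMod(\V)$ and for $\pi_{0}$.

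First I would recall that $\XMod(\V)$ is itself a semi-abelian variety of algebras, by the result of Bourn--Gran and Gran--Rosick\'y cited just before Section~4.2; in particular it has free objects (described explicitly in Corollary~\ref{cor - free object XMod(V)}) and therefore enough projectives. Next, by the hypothesis that $\V$ satisfies~\P\ together with Theorem~\ref{thm - proof of P XMod(V)} (or equivalently Theorem~\ref{thm - P iff}), the variety $\XMod(\V)$ itself satisfies Condition~\P. Hence the source category meets the structural requirements imposed by~\cite{MC-FR-TVdL}.

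It then remains to check the properties of the functor $\pi_{0}$ itself. Protoadditivity is noted at the beginning of Section~4.2 and follows from the semi-abelianness of $\V$ via the argument of~\cite{EG-honfg}. Preservation of coproducts (in fact of all colimits) is immediate since $\pi_{0}$ is left adjoint to the discrete crossed module functor $D\colon \V\to \XMod(\V)$. Preservation of proper morphisms is the content of the example following Lemma~\ref{lemma - reflector sequ right exact}, applied to the reflector $\pi_{0}$ from the semi-abelian variety $\XMod(\V)$ onto its Birkhoff subvariety $\V$ (viewed via $D$).

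With all five hypotheses verified, the machinery of~\cite{MC-FR-TVdL} applies and produces the non-additive derived functors $\Left_n(\pi_{0})$ for every $n\in \Z$ and every object of $\XMod(\V)$. There is no real obstacle here: the content of the theorem is that the preceding sections have assembled precisely the pieces that the abstract framework requires, so the proof is a bookkeeping argument pointing to Theorem~\ref{thm - proof of P XMod(V)}, Lemma~\ref{lemma - reflector sequ right exact} and the adjunction $\pi_{0}\dashv D$. The only step that deserved independent work was showing that Condition~\P\ lifts from $\V$ to $\XMod(\V)$, and that has been settled earlier in the paper.
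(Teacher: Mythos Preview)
Your proposal is correct and follows exactly the same approach as the paper, whose proof is the single sentence ``The proof is a consequence of the previous sections and a theorem in~\cite{MC-FR-TVdL}.'' You have simply made explicit which previous results (semi-abelianness of $\XMod(\V)$, Theorem~\ref{thm - proof of P XMod(V)} for Condition~\P, protoadditivity and colimit-preservation of $\pi_0$, and Lemma~\ref{lemma - reflector sequ right exact} for proper morphisms) are being invoked, which is precisely the intended bookkeeping.
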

\begin{proof}
	The proof is a consequence of the previous sections and a theorem in~\cite{MC-FR-TVdL}.
\end{proof}

\subsection{Concrete examples and direct applications}
In this section, we will discuss some varieties satisfying the assumptions of Theorem~\ref{thm - derived functor XMod(V)}.

A first non-abelian variety is the category of $\Gp$ where the condition \P\ is automatically satisfied since it is a Schreier variety~\cite{Nielsen,Schreier}.

\begin{corollary}[Internal crossed modules over $\Gp$~\cite{MC-FR-TVdL}]
	If we consider the functor $\pi_0\colon \XMod(\Gp) \to \Gp$ then we can compute the non-additive derived functors $\Left_n(\pi_0)\colon \XMod(\Gp)\to \Gp$ for any $n\in \Z$ and  for any object in $\XMod(\Gp)$.\noproof
\end{corollary}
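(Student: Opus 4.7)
The plan is to verify that $\V = \Gp$ meets the hypotheses of Theorem~\ref{thm - derived functor XMod(V)}, after which the corollary follows immediately by specialization. So the proof amounts to checking two things: that $\Gp$ is a semi-abelian variety, and that it satisfies Condition~\P.

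The first point is standard and requires no argument in the body of the proof beyond a citation. For the second, I would invoke the Nielsen--Schreier theorem: every subgroup of a free group is free. This makes $\Gp$ a Schreier variety, and as recalled in Section~\ref{sec - condition P}, every Schreier variety is a trivial example of a variety satisfying~\P\ (since projectives coincide with free objects there, and freeness is preserved under subobjects, in particular under protosplit ones). Hence $\Gp$ satisfies~\P.

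With these two verifications in place, Theorem~\ref{thm - derived functor XMod(V)} applied to $\V = \Gp$ yields exactly the conclusion of the corollary: the non-additive left derived functors $\Left_n(\pi_0)\colon \XMod(\Gp)\to \Gp$ of the protoadditive, coproduct- and proper-morphism-preserving functor $\pi_0$ are well defined on every object of $\XMod(\Gp)$ and for every $n\in \Z$.

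There is no real obstacle here; the only content is the observation that $\Gp$ is Schreier, which makes it a qualifying input to the general machinery. In fact the proof can be phrased in a single sentence of the form ``By the Nielsen--Schreier theorem, $\Gp$ is a Schreier variety, hence satisfies~\P, so Theorem~\ref{thm - derived functor XMod(V)} applies.''
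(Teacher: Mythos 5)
Your proposal matches the paper's own (implicit) argument exactly: the text preceding the corollary notes that $\Gp$ satisfies Condition~\P\ because it is a Schreier variety by the Nielsen--Schreier theorem, and the corollary is then just Theorem~\ref{thm - derived functor XMod(V)} specialized to $\V=\Gp$. Your verification is correct and complete.
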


A first non-abelian and non-Schreier variety is the category of Lie algebras $\LieK$ over an unital commutative ring $\K$ (which is not a Schreier variety (see e.g.\ \cite{Bahturin}). However, this category satisfies the condition \P~\cite{MC-FR-TVdL}.

\begin{corollary}[Internal crossed modules over $\LieK$]
	If we consider the functor $\pi_0\colon \XMod(\LieK) \to \LieK$ then we can compute the non-additive derived functors $\Left_n(\pi_0)\colon \XMod(\LieK)\to \LieK$ for any $n\in \Z$ and  for any object in $\XMod(\LieK)$.\noproof
\end{corollary}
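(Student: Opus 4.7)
The plan is to verify, one by one, the hypotheses of the existence theorem for non-additive left derived functors proved in~\cite{MC-FR-TVdL}, and then to invoke that result directly.

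First, I would establish the structural hypotheses on the source category. The category $\XMod(\V)$ is itself a semi-abelian variety of algebras by~\cite{Gran-Rosicky,Bourn-Gran}; in particular it has free objects, hence enough projectives. Moreover, since $\V$ satisfies Condition~\P\ by assumption, Theorem~\ref{thm - proof of P XMod(V)} transfers~\P\ to $\XMod(\V)$, which supplies the ``projectives are closed under protosplit subobjects'' hypothesis demanded by the framework of~\cite{MC-FR-TVdL}.

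Second, I would verify the three properties of the functor $\pi_{0}\colon\XMod(\V)\to\V$ itself. Protoadditivity is recorded just before Lemma~\ref{lemma - reflector sequ right exact}: $\pi_{0}$ is a reflector from a semi-abelian category onto a Birkhoff subcategory, hence protoadditive by~\cite{EG-honfg}. Preservation of coproducts, and indeed of all colimits, is automatic from the adjunction $\pi_{0}\dashv D$ explained in Section~\ref{sec - non additive functor pi0}. Preservation of proper morphisms follows from Lemma~\ref{lemma - reflector sequ right exact}, applied to the Birkhoff inclusion $\V\hookrightarrow\XMod(\V)$ whose reflector is $\pi_{0}$.

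Third, having assembled all four ingredients---semi-abelian variety with enough projectives, Condition~\P, protoadditivity, and preservation of coproducts and of proper morphisms---the main existence theorem of~\cite{MC-FR-TVdL} immediately produces the left derived functors $\Left_{n}(\pi_{0})$ for every $n\in\Z$ and every object of $\XMod(\V)$. The only step beyond bookkeeping is to align the precise statement of that theorem with the list I have verified; I expect no real obstacle there, since the preceding sections of this paper have been organised precisely to furnish what that theorem requires.
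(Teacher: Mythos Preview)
Your argument is correct and is exactly the route the paper takes: the paper's own proof of Theorem~\ref{thm - derived functor XMod(V)} is literally the one-line ``consequence of the previous sections and a theorem in~\cite{MC-FR-TVdL}'', and the corollary for $\LieK$ carries the \texttt{\textbackslash noproof} tag, i.e.\ it is meant to be immediate from that theorem. The only adjustment you should make is at the point where you write ``since $\V$ satisfies Condition~\P\ by assumption'': the corollary is stated for the specific variety $\LieK$, with no hypothesis on~\P, so you must instead invoke the fact (recorded in the paper just before the corollary, with reference~\cite{MC-FR-TVdL}) that $\LieK$ satisfies~\P; after that, your verification of the remaining hypotheses goes through unchanged. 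A minor aside: your phrasing ``reflector onto a Birkhoff subcategory, hence protoadditive'' overstates the general principle---not every Birkhoff reflector is protoadditive---but the paper indeed records protoadditivity of $\pi_0$ via~\cite{EG-honfg}, so citing that suffices.
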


\subsection{Iterated applications and new examples}

Another kind of example comes from iterations of Theorem~\ref{thm - derived functor XMod(V)} where we use the same underlying category~\cite{EG-honfg}. Indeed, if we start with a semi-abelian variety $\V$ that satisfies \P, then we have the iterated adjunctions
\[
	\xymatrix{
	\V \ar@<-1.1ex>[r]_-{D}^-{\perp}& \XMod(\V) \ar@<-1.1ex>[r]_-{D}^-{\perp} \ar@<-1.1ex>[l]_-{\pi_{0,\V}} & \XMod^2(\V) \ar@<-1.1ex>[l]_-{\pi_{0,\XMod(\V)}}
	}
\]
where $\XMod^2(\V)\coloneq \XMod(\XMod(\V))$. Let's denote the above composition as $\pi_{0,\V}^2\coloneq \pi_{0,\V}\pi_{0,\XMod(\V)}$ and by extension $\pi_{0,\V}^n\coloneq \pi_{0,\V}\pi_{0,\V}^{n-1}$ for $n\geq 1$.
It turns out that the assumptions of Theorem~\ref{thm - derived functor XMod(V)} are satisfied for each adjunction separately and also by the composition.

\begin{corollary}[General construction]
	Let $\V$ be a semi-abelian variety that satisfies the condition \P. If we consider the compositions
	\[
		\pi_{0,\XMod^n(\V)}^m\colon \XMod^{m+n}(\V)\to \XMod^{n}(\V)
	\]
	of $\pi_0$ for $m,n\in \N$ such that $m>1$, then we can compute the non-abelian derived functor $\Left_k(\pi_{0,\XMod^n(\V)}^m)(X)$ for any $k\in\Z$ and for any object $X$ in $\XMod^{m+n}(\V)$.

	In particular, the same is true for the composition $\pi_{0,\V}^m\colon \XMod^m(\V)\to \V$.\noproof
\end{corollary}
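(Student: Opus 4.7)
The plan is to reduce the statement to a direct application of Theorem~\ref{thm - derived functor XMod(V)} to the composed functor. Two things must be checked: that the domain $\XMod^{m+n}(\V)$ satisfies Condition~\P, and that the iterated composition $\pi_{0,\XMod^n(\V)}^m$ is protoadditive and preserves coproducts and proper morphisms.

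For the first ingredient, I would argue by induction on $k\ge 0$ that $\XMod^k(\V)$ is a semi-abelian variety satisfying~\P. The base case $k=0$ is the hypothesis on~$\V$. For the inductive step, the fact that $\XMod^{k+1}(\V)=\XMod(\XMod^k(\V))$ is again a semi-abelian variety is the content of~\cite{Gran-Rosicky,Bourn-Gran}, while the transfer of Condition~\P\ from $\XMod^k(\V)$ to $\XMod^{k+1}(\V)$ is exactly Theorem~\ref{thm - P iff}. Applying this inductive step $m+n$ times yields that $\XMod^{m+n}(\V)$ satisfies~\P, as required.

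For the second ingredient, I would verify the three properties for each elementary factor $\pi_{0,\XMod^k(\V)}\colon \XMod^{k+1}(\V)\to \XMod^k(\V)$ and then propagate them through the composition. Each factor is the left adjoint to the fully faithful discrete crossed module functor $D$, so it preserves all colimits, and hence coproducts; it is protoadditive, as discussed just before Lemma~\ref{lemma - reflector sequ right exact}; and by Lemma~\ref{lemma - reflector sequ right exact} itself it preserves proper morphisms. All three properties are stable under composition: composites of left adjoints are left adjoints, a composite of protoadditive functors is protoadditive since the image of a split short exact sequence under a protoadditive functor is again a split short exact sequence, and the composite of functors preserving proper morphisms is of the same kind. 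Hence $\pi_{0,\XMod^n(\V)}^m$ satisfies all three properties.

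Combining both ingredients, the hypotheses of the non-additive derived functor framework of~\cite{MC-FR-TVdL} that underlies Theorem~\ref{thm - derived functor XMod(V)} are met by $\pi_{0,\XMod^n(\V)}^m\colon \XMod^{m+n}(\V)\to \XMod^n(\V)$, so $\Left_k(\pi_{0,\XMod^n(\V)}^m)(X)$ is well defined for every $k\in\Z$ and every $X\in\XMod^{m+n}(\V)$; the last assertion is the case $n=0$. The only delicate step is the preservation of~\P\ along the tower of $\XMod$-constructions, but this has already been settled in Theorem~\ref{thm - P iff}; everything else reduces to formal properties of adjunctions and to the previously established sequential right-exactness of Birkhoff reflectors.
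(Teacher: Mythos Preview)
Your proposal is correct and follows exactly the argument the paper leaves implicit (the corollary carries \texttt{\textbackslash noproof}): iterate Theorem~\ref{thm - P iff} to propagate Condition~\P\ up the tower $\XMod^k(\V)$, and observe that protoadditivity, preservation of coproducts, and preservation of proper morphisms hold for each factor $\pi_0$ and are stable under composition, so the framework of~\cite{MC-FR-TVdL} applies. The only cosmetic remark is that you could cite Theorem~\ref{thm - proof of P XMod(V)} directly for the inductive step, since only the forward implication of Theorem~\ref{thm - P iff} is needed.
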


\section*{Acknowledgments}
Many thanks to my supervisor, Tim Van der Linden, who introduced me to the theory of internal crossed modules and the theory of cosmash products. His feedback throughout the different stages of this article has been truly priceless.


\end{document}